\documentclass{amsart}
\usepackage{amsthm}
\usepackage{hyperref}
\usepackage{graphicx}

\newtheorem{theorem}{Theorem}[section]
\newtheorem{definition}[theorem]{Definition}
\newtheorem{lemma}[theorem]{Lemma}
\newtheorem{proposition}[theorem]{Proposition}
\newtheorem{assumption}[theorem]{Assumption}
\newtheorem{corollary}[theorem]{Corollary}
\newtheorem{remark}[theorem]{Remark}
\newtheorem{example}[theorem]{Example}
\newcommand{\dom}{\mathrm{dom}}

\renewcommand{\>}{\rangle}
\newcommand{\E}{\mathbb{E}}     
\newcommand{\CC}{\mathcal C}
\newcommand{\GG}{\mathcal G}

\usepackage{color}
\definecolor{darkgreen}{rgb}{0, .5, 0}
\definecolor{darkred}{rgb}{.5, 0, 0}

\renewcommand{\deg}{\mathrm{deg}}  

\title{Abstract polynomial processes}
\date{\today}
\thanks{We are grateful to Julia Eisenberg for discussions.}
\author{Fred Espen Benth, Nils Detering and Paul Kr\"uhner}
\address{Fred Espen Benth \\
University of Oslo\\
Department of Mathematics \\
P.O. Box 1053, Blindern\\
N--0316 Oslo, Norway}
\email[]{fredb\@@math.uio.no}
\address{Nils Detering \\ 
University of California at Santa Barbara\\
Department of Statistics and Applied Probability\\
CA 93106 Santa Barbara, USA}
\email[]{detering\@@pstat.ucsb.edu}
\address{Paul Kr\"uhner \\
WU Vienna \\
Institute for Statistics and Mathematics \\
Welthandelsplatz 1
1020 Vienna, Austria
}
\email[]{paul.eisenberg\@@wu.ac.at}

\keywords{Infinite dimensional stochastic processes, polynomial processes, semigroups, graded vector spaces}

\begin{document}
\maketitle


\begin{abstract}
We suggest a novel approach to polynomial processes solely based on a polynomial action operator.
With this approach, we can analyse such processes on general state spaces, going far beyond Banach spaces. Moreover, we can be very flexible in the definition of what 'polynomial' means. We show that 'polynomial process' universally means 'affine drift'. Simple assumptions on the polynomial action operators lead to stronger characterisations on the polynomial class of processes. 

In our framework we do not need to specify polynomials explicitly but can work with a general sequence of graded vector spaces of functions on the state space. Elements of these graded vector spaces form the monomials by introducing a sequence of vector space complements. The basic tool of our analysis is the polynomial action operator, which is a semigroup of operators mapping conditional expected values of monomials acting on the polynomial process to monomials of the same or lower grade. Unlike the classical Euclidean case, the polynomial action operator may not form a finite-dimensional subspace after a finite iteration, a property we call locally finite. 
We study abstract polynomial processes under both algebraic and topological assumptions on the polynomial actions, and establish an affine drift structure. Moreover, we characterize the covariance structure under similar but slightly stronger conditions. A crucial part in our analysis is the use of the (algebraic or topological) dual of the monomials of grade one, which serves as a linearization of the state space of the polynomial process. Our general framework covers polynomial processes with values in Banach spaces recently studied by Cuchiero and Svaluto-Ferro \cite{CS-F}.
\end{abstract}


\section{Introduction}
A polynomial process $X$ is characterized by the fact that for any polynomial $p$ of degree $m$ there exists another polynomial $q$ of degree at most $m$ such that $\E [p(X(s+h))|\mathcal{F}_s]=q(X_s)$, $s,h\geq 0$. Polynomial processes have been studied on different state spaces as for instance $\mathbb{R}^d$ or subsets thereof. At the beginning of a study of polynomial processes stands the specification of a space of functions which are considered polynomials. For the real-valued setting for instance, the polynomials up to degree $m$ are defined by 
$$
\mathcal{P}_m := \{x\mapsto  \sum_{k=0}^m a_k x^k \}
$$
and form an $m$-dimensional vector space and the space of all polynomials is $\mathcal{P}:=\cup_{m=0}^{\infty} \mathcal{P}_m $. It is equipped with a {\em degree function} $\text{deg} :\mathcal{P} \rightarrow \mathbb{N}$ that naturally assigns to each polynomial its degree. 

Mimicking such a nested vector space structure, in this paper, we start abstractly with a set of states $E$ (state space) and a sub-vector space $\mathcal{P}$ of the set of functions from $E$ to a field $\mathbb{F}$ together with a degree function $\text{deg} :\mathcal{P} \rightarrow \mathbb{N}$. This degree function then defines a {\em gradation sequence} $\mathcal{P}_1 \subset \mathcal{P}_2 \dots $ of sub-vector spaces $\mathcal P_n := \{p\in\mathcal P: \deg(p) \leq n\}$     and a decomposition into a direct sum $\mathcal{P}=\bigoplus_{k=0}^{\infty} \mathcal{M}_n$ where $\mathcal{M}_n$ is the vector space complement of $\mathcal{P}_{n-1}$ in $\mathcal{P}_{n}$, and contains the {\em monomials} of degree $m$. We then call an 
$E$-valued process $X(t)_{t\geq 0}$ polynomial if for each function $p \in \mathcal{P}_n$ there exists a function $q\in \mathcal{P}_n$ such that $\E [p(X(s+h))|\mathcal{F}_s]=q(X_s)$, defining a family of semigroups which we call polynomial action operator of the polynomial process. 

Our abstract setting allows several major advantages compared to the classical theory: By encoding the polynomial property in the polynomial action operator, we can analyse the processes on general state spaces, going far beyond Banach state spaces. Indeed, set $E$ does not need to be a subset of a topological space but can just be any set to start with. Moreover, we can be very flexible in the definition of what 'polynomial' means, as we can include functions which are usually not considered polynomials but still generate an invariant class under conditional expectation.  
In this abstract setting, we are able to show that 'polynomial process' universally means 'affine drift'. Simple additional assumptions on the polynomial action operators lead to stronger characterisations on the polynomial class of processes, including an understanding of the quadratic covariation.


{\em Related literature:}
Affine processes, which play a prominent role in finance and economics, have first been analysed systematically in
Duffie, Filipovi{\'{c}} and Schachermayer \cite{duffie.et.al.03}. Polynomial processes offer a generalization of affine processes --- short the additional requirement that they assume finite absolute power moments of any order --- and were first introduced in Cuchiero  \cite{cuchiero.dissertation}. Since then, polynomial processes received a lot of attention and they have been studied on different state spaces including $\mathbb{R}^n$ and subsets thereof (see Filipovi{\'{c}} and Larsson \cite{FL,FL2}, for instance). They have also found many applications in financial and insurance mathematics (see Ackerer, Filipovi{\'c} and Pulido \cite{AFP}, Benth and Lavagnini \cite{BL}, Biagini and Zhang \cite{Biag-Zhang}, Cuchiero, Keller-Ressel and Teichmann \cite{CKT}, Filipovi{\'{c}} \cite{F}, Filipovi{\'{c}}, Larsson and Pulido \cite{FLP},  
Kleisinger-Yu {\it et al.} \cite{K2019MF} and Ware \cite{F2017P}, for instance).
We also would like to mention the linear rational term structure models proposed in Filipovi{\'{c}}, Larsson and Trolle \cite{FLT-JF17}. Here, the diffusion model has an affine structure of the drift, something we recover under rather mild conditions for our general polynomial processes.

Recently, an infinite-dimensional extension of polynomial processes has been proposed in Cuchiero and Svaluto-Ferro \cite{CS-F} and Benth, Detering and Kr\"uhner \cite{BDK-polybanach}, where, in the latter article, multi-linear maps have been used as the replacement for classical monomials. In particular, in an infinite-dimensional setting some flexibility arises when specifying the class of polynomials. As each of these specifications gives rise to a nested vector space structure considered here, our general analysis is particularly valuable in an infinite-dimensional setting.  

{\em Main results and discussion on Cuchiero and Svaluto-Ferro's paper 'Infinite dimensional polynomial processes' \cite{CS-F}:}
The paper \cite{CS-F} by Cuchiero and Svaluto-Ferro has overlap with our paper and we like to discuss their results and differences to the present paper from our perspective. Cuchiero and Svaluto-Ferro \cite{CS-F} consider as space of polynomials the algebra generated by a subset of continuous linear functionals on a Banach space, while we understand by polynomials just any given graded vector space of functions. Recall that classical polynomials as well as the algebra generated by a subset of linear functions has a natural inherent graded structure which is compatible with pointwise multiplication, thus the polynomials considered in \cite{CS-F} are naturally a graded algebra. 

Further, in \cite{CS-F} polynomial processes with values in a Banach space are described from the martingale problem perspective, i.e.\ a polynomial process is described via the action of its generator on polynomials. We do not assume any linear structure for the state space of the polynomial process and start from the above mentioned invariance condition for conditional expectations, i.e.\ we assume that conditional expectations of polynomials are polynomials again. Cuchiero and Svaluto-Ferro \cite{CS-F} prove that under some technical condition this invariance also holds in their setup while we give natural examples in our setup where the generator description breaks down, cf.\ Example \ref{ex:outside drift} where the constant part of the drift points out of the given Hilbert space while other objects are as regular as one would expect in a stochastic partial differential equation-description for polynomial processes. Allowing polynomial processes to live on non-linear structures has the advantage when studying them, for instance, on Lie-groups where they have a natural interpretation as well --- recall that L\'evy processes have been studied on Lie groups, cf.\ Ming \cite{Ming}.

The main results of Cuchiero and Svaluto-Ferro \cite[Theorems 3.4 and 3.8]{CS-F} are the dual and bidual moment formulas. The dual moment formula describes the time-evolution of $\E[p(X_t)]$ in terms of the generator where $p$ is a polynomial and $X$ a polynomial process. We recover this formula in our setting under some algebraic (Proposition \ref{p:Matrix exponential}) and analytic (Lemma \ref{l:Mp martingale}) conditions. An advantage of these formulas in our setup is that polynomials up to order $m$ can be reinterpreted as polynomials of order $1$ which allows to apply formulas for first order polynomials to higher order polynomials. The bidual moment formula in Cuchiero and Svaluto-Ferro \cite{CS-F} describes the time evolution of $\E[(X_t^{\otimes n})_{n=1,\dots,k}]$, that is, of the expectation of the $k$-th tensor-power of $X$, which is in some situations easier to handle than the specific moments $\E[p(X_t)]$ for a given polynomial. In many cases, $\E[p(X_t)]$ can be recovered from the expectation of the tensor powers. We divert from this approach and provide instead a drift-martingale-type decomposition of a polynomial process, cf.\ Theorem \ref{t:affine drift algebraic} and Theorem \ref{t:affine drift}, which allows to recover $\E[p(X_t)]$ directly by applying expectations. Again, by reinterpreting higher order polynomials as first order polynomials and reapplying our results we recover the expectations of the higher tensor powers in the case when we mean by polynomials the algebra generated by linear functionals. If we additionally assume a multiplicative structure on the polynomials, then we can additionally recover the quadratic covariation of the given process, cf.\ Theorem \ref{t:quadratic covariance} and Theorem \ref{t:L2-structure}.

\subsection{Notations}
  We denote by $\mathbb F$ either the field of real numbers $\mathbb R$ or complex numbers $\mathbb C$. $\mathbb N$ resp.\ $\mathbb R_+$ denote the set of integers resp.\ real numbers which are greater or equal zero. $(\Omega,\mathfrak A,(\mathcal F_t)_{t\geq 0},P)$ denotes a filtered probability space. For an $\mathbb F$-valued function $f$ we denote by $\overline f$ its complex conjugate.
  We will use the letter $I$ for the identity operator on any $\mathbb F$-vector space $V$. If $V$ is a vector space over $\mathbb F$, then by a semigroup (resp.\ group) of operators we mean a family $(T_h)_{h\geq 0}$ (resp.\ $(T_h)_{h\in\mathbb F}$) of linear operators on $V$ with $T_0=I$ is the identity on $V$ and $T_hT_k=T_{h+k}$ for any $h,k\geq 0$ (resp.\ $h,k\in\mathbb F$). For a set $A$ we denote by $1_A$ the functional class which maps any element inside $A$ to $1$ and any other element to $0$.

\section{Polynomials, Polynomial Actions and Polynomial Processes}\label{sec:multilin:process new}
In this section we introduce abstract polynomial processes with values on a general non-empty set $E$. We first recall the classical $\mathbb R^d$-valued idea behind polynomial processes (see Cuchiero \cite{cuchiero.dissertation}).
  From this idea a polynomial process $X$ is a c\`adl\`ag process such that for any 'polynomial' $p$ and any $h\geq 0$ there is a polynomial $q$ with degree less or equal than the degree of $p$ such that for any $t\geq 0$
   $$ \E[ p(X_{t+h}) |\mathcal F_t] = q(X_t). $$
  For this, we need to define what we mean by 'polynomials'. This does, of course, induce an action $T_h$ on the set of polynomials which with $h$ fixed produces a polynomial $q=T_hp$ out of a polynomial $p$. For this to work well with conditional expectations, this action should be
   \begin{enumerate}
      \item linear,
      \item have the semigroup property $T_hT_k=T_{h+k}$ for $h,k\geq 0$ and
      \item $h\mapsto T_hp$ be right-continuous.
   \end{enumerate}
   These properties are motivated by the linearity, the tower property and continuity-properties of the conditional expectation, respectively. 

In order to make sense out of the above remark in an abstract setting with states of the process $X$ in a general set $E$, we need to:
\begin{itemize}
   \item[a)] Introduce what we mean by polynomials.
   \item[b)] Define operators $T_h$, $h\geq 0$ on the polynomials with $T_hT_k = T_{h+k}$ for $h,k\geq 0$.
   \item[c)] Say what we mean by $E$-valued polynomial processes.
\end{itemize}

We continue with formalising our ideas. In the rest of the paper, we reserve the notation $E$ for the state space of our polynomial processes to be defined, and $\mathcal F(E,\mathbb F)$ is the set of functions from $E$ to $\mathbb F$.  
\begin{definition}
\label{def:polynomial}
  By {\em polynomials} we mean a sub-vector space $\mathcal P\subseteq \mathcal F(E,\mathbb F)$ containing all constant functions, together with a degree-function 
   $$ \deg :\mathcal P\rightarrow \mathbb N $$
   such that $\deg(p+q) \leq \max\{\deg(p),\deg(q)\}$ and $\deg(\lambda p)=\deg(p)$ for any $p,q\in\mathcal P$, $\lambda \in\mathbb F\backslash\{0\}$ and $\deg(p) = 0$ if and only if $p$ is a constant function.
   
\end{definition}
Note that we use the convention $\deg(0) = 0$ in the definition above.  Allowing for $\mathbb C$-valued polynomials rather than $\mathbb R$-valued polynomials does not extend the theory but in some examples it may simplify matters.
   
We introduce the subset of polynomials of {\em order $n$} as
\begin{equation}
  \mathcal P_n := \{p\in\mathcal P: \deg(p) \leq n\}    
\end{equation}
and we fix a vector space complement $\mathcal L$ of the constant functions $\mathcal P_0$ in $\mathcal P_1$ which we call {\em linear functions}. We note that this set can be viewed as a realisation of monomials of order 1, and will play a crucial role in the discussions to come regarding the drift of polynomial processes. We state an example being important for our analysis. 
\begin{example}
Fix $x_0\in E$. Define 
   $$ \mathcal L:= \{p\in\mathcal P_1: p(x_0) = 0\}. $$
  Then $\mathcal L$ is a vector space complement of $\mathcal P_0$ in $\mathcal P_1$. This shows that these vector space complements always exist. However, sometimes it is more convenient to work with a different set $\mathcal L$, as the one we state here. To illustrate this point let $E=[1,2]^d$ and $\mathcal P$ be the set of classical $d$-variable polynomials with values in $\mathbb R$ restricted to $E$. The linear functions $\mathcal L$ on $\mathbb R^d$ restricted to $E$ is a natural choice of complement to the constant polynomials but it is not given via the above construction.
\end{example}
   
Next, we define {\em polynomial actions}: \begin{definition}
\label{def:poly-action}
  A {\em polynomial action} is a family $(T_h)_{h\geq 0}$ of linear operators on $\mathcal P$ with 
  \begin{enumerate}
     \item $\deg(T_hp)\leq\deg(p)$ for any $p\in\mathcal P$, $h\geq 0$,
     \item $T_0p = p$ for any $p\in\mathcal P$,
     \item $T_hT_k=T_{h+k}$ for any $h,k\geq 0$ and
     \item $\mathbb R_+\rightarrow\mathbb C, h\mapsto (T_hp)(x)$ is right-continuous for any $p\in\mathcal P$, $x\in E$.
\end{enumerate}    
    The {\em generator} $\mathcal G$ of $T$ is defined as
  $$ \mathcal Gp: \mathcal D \rightarrow\mathbb R, x\mapsto \lim_{h\searrow 0} \frac{T_hp(x)-p(x)}{h}$$
  where
  $$\mathcal D := \left\{p\in\mathcal P: \exists q\in\mathcal P\, \text{s.t.}\, \lim_{h\searrow0}\frac{T_hp(x)-p(x)}{h} =q(x)\,\,\forall x\in E\right\}$$
  and we call $\mathcal D$ the {\em domain of $\mathcal G$}.
\end{definition}

Finally, we are ready to introduce what we shall mean by {\it polynomial processes} in this paper:  
\begin{definition}
\label{def:poly-process}  
  A function $X:\Omega\times \mathbb R_+ \rightarrow E$ is called an {\em $E$-valued $\mathcal P$-polynomial process with action $T$} if for any $p\in\mathcal P$:
  \begin{enumerate}
     \item $\E[|p(X_t)|] < \infty$, $t\geq 0$,
     \item $\E[p(X_{t+h})| \mathcal F_t] = T_hp(X_t)$, $t,h\geq 0$,
     \item $\mathbb R_+\rightarrow\mathbb F, t\mapsto p(X_t)$ has c\`adl\`ag paths and
     \item $p(X_t)$ is $\mathcal F_t$-measurable for any $t\geq0$.
  \end{enumerate}
 If there is no ambiguity, then we sometimes simply refer to $X$ as a polynomial process.
\end{definition}
The c\`adl\`ag-property (3) in the definition of polynomial processes above implies that $p(X)$ is c\`adl\`ag for any $p\in\mathcal L$ and, hence, we may say that $X$ is weakly c\`adl\`ag. 
\begin{remark}
\label{rem:T_id_on_const}
Notice that from the degree reducing property (1) of polynomial actions in Definition \ref{def:poly-action}, it follows that $T_hp\in\mathcal P_0$ whenever $p\in\mathcal P_0$. I.e., from property (2) in Definition \ref{def:poly-process} linking the polynomial process to the action $T$, we see that $T$ is the identity operator on the constant functions, $T_hp=p, \forall p\in\mathcal P_0$. Hence, the domain $\mathcal D \supseteq \mathcal P_0$ contains the constant functions and the generator $\mathcal G$ maps constant functions to $0$.
\end{remark}

Sometimes it is useful to disregard higher order polynomials. This allows to omit definitions like $m$-polynomial processes which appears for instance in Cuchiero \cite{cuchiero.dissertation}.
\begin{remark}
  Let $n\in \mathbb N$. Then $(\mathcal P_n,\deg)$ is a graded vector space (in the sense of Bourbaki, N. (1974) Algebra I. Chapter 2, Section 11.) and $X$ is a $\mathcal P_n$-polynomial process with action $(T_h|_{\mathcal P_n})_{h\geq 0}$. 
\end{remark}

We end this section with a number of examples of polynomial processes as we have defined them, with a link to existing theory on polynomial processes.

\begin{example}
Classical polynomial processes in the sense of Cuchiero \cite{cuchiero.dissertation} are of course polynomial in the sense of Definition \ref{def:poly-process}. Indeed, let $E\subseteq \mathbb R^d$ be closed and $\mathcal P$ be the set of classical $d$-variable polynomials restricted to $E$, with the degree function defined as the order of the polynomials.

\end{example}

The next example shows that by taking the 'right' set of polynomials many Markov processes can be understood as polynomial processes. Indeed, if $\mathcal P$ is the vector space generated by eigenfunctions of the generator, then the Markov process is a $\mathcal P$-polynomial process. 
\begin{example}\label{ex: Levy}
  Let $L$ be an $\mathcal F$-L\'evy process on $E = \mathbb R^d$ and $\mathcal P$ the vector space generated by the functions $E\rightarrow \mathbb C, x\mapsto e^{iux}$ where $u\in\mathbb R^d$ and $ux := \sum_{j=1}^d \overline x_j u_j$ denotes the standard scalar product. We define 
  $$\deg:\mathcal P\rightarrow \{0,1\}, p\mapsto 1_{\{p\text{ non-constant}\}}.$$
  Observe that $\mathcal P=\mathcal P_1$. Take  $\mathcal L:= \{p\in\mathcal P: p(0) = 0\}$ to be the linear functions
  (or, "first order monomials"). Let $\psi$ be the L\'evy exponent of $L$, i.e.\ $\psi:\mathbb R^d\rightarrow \mathbb C$ is such that $\E[ e^{iuL(t)} ] = \exp(t\psi(u))$ for any $t\geq 0$, $u\in\mathbb R^d$ and let 
   $$ T_h:\mathcal P\rightarrow \mathcal P, \sum_{n=1}^k c_n e^{iu_n\cdot} \mapsto \sum_{n=1}^k c_n \exp(h\psi(u_n)) e^{iu_n\cdot}.$$
   Then $L$ is an $E$-valued $\mathcal P$-polynomial process with action $T$.
   
   An other possible choice of polynomials is the set of real-valued continuous functions on the one-point compactification of $E$.
\end{example}

Affine processes are thought to be polynomial processes if they satisfy certain moment conditions \cite[Example 4.4.1]{cuchiero.dissertation}. While this is true, they are always polynomial processes when polynomials are interpreted in a different way.
\begin{example}
  Let $X$ be an $\mathcal F$-affine process with values on $\mathbb R^d$, i.e.\ $X$ is a c\`adl\`ag process and there are functions $\psi:\mathbb R_+\times \mathbb C^d\rightarrow\mathbb C^d$ and $\phi:\mathbb R_+\times \mathbb C^d\rightarrow \mathbb C$ such that 
   $$ \E[ e^{iuX_{t+h}} |\mathcal F_t] = \exp(\phi(h,u) + \psi(h,u) X_t) $$
   and such that the flow property
    $$ \phi(h+k,u) = \phi(h,u)+\phi(k,\psi(h,u)),\quad \psi(h+k,u) = \psi(k,\psi(h,u))$$
   holds for any $t,h,k\geq 0$, $u\in\mathbb R^d$ 
   
  We define $(\mathcal P,\deg)$ as in Example \ref{ex: Levy} and introduce
   $$ T_h : \mathcal P\rightarrow\mathcal P, \sum_{n=1}^k c_n e^{iu_n\cdot} \mapsto \sum_{n=1}^k c_n \exp(\phi(h,u) + \psi(h,u)(\cdot)).$$
  
  Then $X$ is an $E$-valued $\mathcal P$-polynomial process with action $T$. For more details on affine processes we refer to Duffie et al. \cite{duffie.et.al.03}.
\end{example}

The next example refers back to a recent study of Cuchiero, Larsson and Svaluto-Ferro \cite{sulito.cuchiero.larsson}.

\begin{example}
   Let $(S,d)$ be a complete metric space, $E$ the set of Borel probability measures on $S$, and $\mathcal L$ the set of functions of the form
    $$ 
    E \rightarrow\mathbb R, \mu\mapsto \int f d\mu. 
    $$
    Here, $f:S\rightarrow \mathbb R$ is a continuous function vanishing at infinity, i.e.\ it has a continuous extension to the one-point compactification of $S$ with $f(\Delta)=0$ where $\Delta$ is the additional point. Further, let $\mathcal P$ be the algebra generated by $\mathcal L$ and the constant functions and $\deg:\mathcal P\rightarrow \mathbb N$ with
     $$ \deg\left(c_0 + \sum_{i=1}^n c_i \prod_{j=1}^{k_i} f_{i,j}\right) = \begin{cases} 0 & \text{if }n=0,\\ \max\{k_i:i=1,\dots,n\} & \text{otherwise.}\end{cases}$$
     where $n\in\mathbb N$, $k_1,\dots,k_n\in\mathbb N$, $c_1,\dots,c_n\in\mathbb R$, $f_{i,j}\in\mathcal L$. $\mathcal P$-polynomial processes on this structure have been investigated in Cuchiero, Larsson and Svaluto-Ferro \cite{sulito.cuchiero.larsson}.
\end{example}

\section{Algebraic Structure of Locally Finite Polynomial Actions}
We start our analysis of abstract polynomial process $X$ or rather their corresponding actions defined in Definitions \ref{def:poly-process}, \ref{def:poly-action} based on purely algebraic considerations. That is, we will not make any topological assumptions on $E$, the space $\mathcal P$, or even $\mathcal L$ in this Section. We recall that $X$ is an $E$-valued $\mathcal P$-polynomial process according to Definition \ref{def:poly-process} for a given set of polynomials $(\mathcal P,\deg)$ (Definition \ref{def:polynomial}) and $T$ being a polynomial action (Definition \ref{def:poly-action}). 

The property $\deg(T_hp)\leq\deg(p)$ of a polynomial action $T$ on $p\in\mathcal P$ means that $\mathcal P_n$ is $T$-invariant, that is, $T$ respects the gradation $\deg$ on $\mathcal P$. This has the immediate implication that $T$ can be understood as a 'triangular' type (matrix) operator on $\mathcal P_n$, where the diagonal elements are operators on vector space complements of $\mathcal P_k$ in $\mathcal P_{k+1}$.
To make this precise, we need first to introduce what we shall understand as higher-order monomials in abstract polynomials $\mathcal P$.

For this, let $\mathcal M_n$ be a vector space complement of $\mathcal P_{n-1}$ in $\mathcal P_{n}$, i.e. $\mathcal M_n$
\begin{equation}\label{def:monomials}
    \mathcal P_{n}=\mathcal P_{n-1}\oplus \mathcal M_n
\end{equation}
for $n\geq 2$, $\mathcal M_1:=\mathcal L$ and $\mathcal M_0:=\mathcal P_0$ be the set of constant functions on $E$. Further define the linear projections
\begin{equation}\label{def:projections}
  \Pi_n:\mathcal P\rightarrow \mathcal M_n  
\end{equation}
with kernel $\mathcal P_{n-1}\oplus \sum_{k=n+1}^\infty \mathcal M_k$ for $n\geq 0$ where $\mathcal P_{-1} := \{0\}$. The first order monomials $\mathcal M_1$ will play a prominent role in the construction of the linearization of $E$ and we thus keep the special notation $\mathcal L$. 

The next lemma shows that $T$ has an upper-block-triangle form relative to the algebraic decomposition $\mathcal P=\mathcal M_0\oplus\mathcal M_1\oplus\dots$.
\begin{lemma}\label{l:T triangle}
    It holds that 
    $$ T^{(n)}_h := \Pi_n T_h|_{\mathcal M_n} $$
  is a semigroup of linear operators on $\mathcal M_n$ and
   $$ T_h = \sum_{n=0}^\infty \sum_{l=0}^n \Pi_l T_h \Pi_n = \sum_{n=0}^\infty\left( T^{(n)}_h\Pi_n + \sum_{l=0}^{n-1} \Pi_l T_h \Pi_n\right)$$
   where, when inserting a polynomial, at most finitely many summands are non-zero.
\end{lemma}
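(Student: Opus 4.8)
The plan is to exploit two facts: first, that the projections $\Pi_n$ form a (pointwise finite) partition of the identity, $\sum_{n\geq 0}\Pi_n = I$ on $\mathcal P$; and second, that the degree-reducing property (1) of Definition \ref{def:poly-action} forces $T_h$ to respect the filtration $\mathcal P_0\subseteq\mathcal P_1\subseteq\cdots$, i.e.\ $T_h(\mathcal P_n)\subseteq\mathcal P_n$. The first fact is immediate from the direct-sum decomposition $\mathcal P=\bigoplus_{k\geq 0}\mathcal M_k$ together with the definition of $\Pi_n$ as the coordinate projection onto $\mathcal M_n$ (in particular $\Pi_n$ is the identity on $\mathcal M_n$ and annihilates $\mathcal P_{n-1}$): each $p\in\mathcal P$ lies in some $\mathcal P_N$, so $\Pi_n p = 0$ for $n>N$ and $p=\sum_{n=0}^N\Pi_n p$, which also supplies the claimed finiteness of the sums.

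First I would record the upper-triangular vanishing. Fix $l>n$ and $p\in\mathcal P$. Since $\Pi_n p\in\mathcal M_n\subseteq\mathcal P_n$, property (1) gives $T_h\Pi_n p\in\mathcal P_n\subseteq\mathcal P_{l-1}$, and as $\mathcal P_{l-1}$ lies in the kernel of $\Pi_l$ we obtain $\Pi_l T_h\Pi_n = 0$. Consequently, writing $T_h = (\sum_{l}\Pi_l)\,T_h\,(\sum_{n}\Pi_n) = \sum_{n}\sum_{l}\Pi_l T_h\Pi_n$ and discarding the vanishing terms with $l>n$ yields the first displayed identity $T_h=\sum_{n=0}^\infty\sum_{l=0}^n\Pi_l T_h\Pi_n$. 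The second form follows by separating the diagonal index $l=n$ and noting that $\Pi_n T_h\Pi_n = T^{(n)}_h\Pi_n$, since $\Pi_n p\in\mathcal M_n$ and $T^{(n)}_h$ is by definition $\Pi_n T_h$ restricted to $\mathcal M_n$.

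It remains to verify that each diagonal block $T^{(n)}=(T^{(n)}_h)_{h\geq0}$ is a semigroup on $\mathcal M_n$. Linearity is clear, and $T^{(n)}_0 = \Pi_n T_0|_{\mathcal M_n} = \Pi_n|_{\mathcal M_n} = I_{\mathcal M_n}$ by property (2) and because $\Pi_n$ is the identity on $\mathcal M_n$. For the composition law, fix $p\in\mathcal M_n$ and decompose $T_k p = T^{(n)}_k p + r$, where $T^{(n)}_k p=\Pi_n T_k p\in\mathcal M_n$ and $r:=\sum_{l=0}^{n-1}\Pi_l T_k p\in\mathcal P_{n-1}$ (again using $T_k p\in\mathcal P_n$). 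Applying $T_h$ and projecting with $\Pi_n$, the remainder drops out: $T_h r\in\mathcal P_{n-1}$ by property (1), so $\Pi_n T_h r = 0$. Hence $\Pi_n T_h T_k p = \Pi_n T_h T^{(n)}_k p = T^{(n)}_h T^{(n)}_k p$, while the semigroup property (3) of $T$ gives $\Pi_n T_h T_k p = \Pi_n T_{h+k} p = T^{(n)}_{h+k} p$. Comparing the two expressions establishes $T^{(n)}_h T^{(n)}_k = T^{(n)}_{h+k}$.

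I expect the only genuinely delicate point to be this last step: the diagonal blocks close up into semigroups precisely because the off-diagonal, lower-order remainder $r$ cannot be promoted back into $\mathcal M_n$ by a later application of $T_h$. This is exactly where the degree-reducing axiom (1) is indispensable --- without it, $T_h r$ could acquire an $\mathcal M_n$-component and the clean composition $T^{(n)}_h T^{(n)}_k = T^{(n)}_{h+k}$ would fail, even though the triangular decomposition of $T_h$ itself would remain formally valid.
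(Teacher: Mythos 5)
Your proposal is correct and follows essentially the same route as the paper: the key point in both is that $\Pi_l T_h\Pi_n=0$ for $l>n$ because $T_h\mathcal M_n\subseteq\mathcal P_n$ and $\Pi_l$ annihilates $\mathcal P_n$. You additionally spell out the verification that each diagonal block $(T^{(n)}_h)_{h\geq0}$ is a semigroup (the remainder $r\in\mathcal P_{n-1}$ being killed by $\Pi_n$ after applying $T_h$), a detail the paper subsumes under ``the claim follows''; this is a welcome completion, not a different argument.
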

\begin{proof}
  Let $h\geq 0$. For $p\in\mathcal P$ we find with $k:=\deg(p)$ that $p\in\mathcal P_k$ and, hence, $\Pi_n p = 0$ for any $n>k$. Thus, we have $p = \sum_{n=0}^\infty \Pi_np$ because there are at most finitely many non-zero summands. We find
  $$ T_h = \sum_{n=0}^\infty \sum_{l=0}^\infty \Pi_lT_h\Pi_n $$
   and we have $\Pi_lT_h\Pi_n = 0$ if $l>n$, $l,n\in\mathbb N$ because $T_h\Pi_n\mathcal P = 
  T_h\mathcal M_n \subseteq \mathcal P_n$ and $\Pi_l\mathcal P_n = \{0\}$. The claim follows.
\end{proof}
Recall from Remark \ref{rem:T_id_on_const} that $T_h$ acts as the identity operator on constants. 
Rewriting the last equality in Lemma \ref{l:T triangle} as a matrix operator on $\mathcal M_0\oplus \mathcal M_1\oplus \mathcal M_2\oplus \cdots$ yields
 $$ \mathrm{matrix}(T_h) = \left(\begin{matrix}
   I&\Pi_0T_h&\Pi_0T_h& \Pi_0T_h &\cdots \\
   0& T_h^{(1)} & \Pi_1T_h & \Pi_1 T_h &\cdots \\
   0 & 0 & T_h^{(2)} & \Pi_2 T_h & \cdots\\
   &\vdots&&\ddots
  \end{matrix}\right)$$
for any $h\geq 0$ where $I$ denotes the identity operator. We conclude that $T_h$ can be represented as an upper triangular block matrix.

We will now introduce the following three important properties of the polynomial action $T$:
\begin{definition}\label{d:loc finite}
  We say 
   \begin{itemize}
     \item $T$ is {\em locally finite} if for any $p\in\mathcal P$ there is $d\in\mathbb N$ such that for any $h>0$ the set $\{T_h^kp: k=0,\dots,d\}$ is linear dependent. Here, $T_h^k$ means composition of the operator $T_h$ $k$-times. 
     \item $T$ is {\em reducing} if for any $p\in\mathcal P$, $h\geq 0$ there is $c\in\mathbb F$ with $\deg(T_hp-cp) < \max\{1,\deg(p)\}$.
     \item $T$ is {\em strongly reducing} if there is one $c\in\mathbb F$ such that for any $p\in\mathcal P$, $h\geq 0$ one has $\deg(T_hp-cp) < \max\{1,\deg(p)\}$.
   \end{itemize}
  A polynomial process is called locally finite (resp.\ reducing, resp.\ strongly reducing) if its action is locally finite (resp.\ reducing, resp.\ strongly reducing).
\end{definition}
The following remark links the reducing property of $T$ to the classical finite-dimensional polynomial processes. Example \ref{ex: Levy} shows a process where the set of polynomials of order $1$ is infinite-dimensional and locally finiteness holds. 
\begin{remark}
  Let us consider the case that $E=\mathbb R^d$ and $\mathcal P_n$ is the set of  polynomials in $d$ commuting variables up to degree $n$. Also we assume that $X$ is an $E$-valued $\mathcal P$-polynomial process with some action $T$ and a diffusion process, i.e. $dX_t = \beta_tdt + \sigma_t dW_t$ where $W$ is a $d$-dimensional standard Brownian motion and $\beta,\sigma$ are appropriate integrands. Then \cite[Proposition 4.2.1]{cuchiero.dissertation} yields that $\beta_t = b(X_t)$ for an affine function $b:\mathbb R^d\rightarrow\mathbb R^d$ and $\sigma_t\sigma_t^T = q(X_t)$ where $q:\mathbb R^d\rightarrow \mathbb R^{d\times d}$ is a quadratic function. 
  
  Since $\mathcal P_n$ is finite dimensional, we know that $T$ is locally finite. $T$ is reducing if and only if the following two conditions hold:
     \begin{enumerate}
         \item $b(x)=b(0)+\lambda_1 x$ for some constant $\lambda_1\in\mathbb R$
         \item $q(x)=q(0)+\nabla q(0)x + \lambda_2 (x\otimes x)$ for some constant $\lambda_2\in\mathbb R$.
     \end{enumerate}
  $T$ is strongly reducing if and only if $b$ is constant and $q$ is affine.
  
  The characterisations for reducing and strongly reducing will follow easily from Corollaries \ref{c:reducing compute} and \ref{c:strongly reducing} below.
  
  We mention that also the polynomial processes discussed in \cite{BDK-polybanach} are strongly reducing.
\end{remark}

The following basic relations between reducing and locally finite actions hold:
\begin{lemma}\label{l:D=P}
  If $T$ is reducing, then it is locally finite. If $\mathcal P_n$ is finite dimensional for any $n\in\mathbb N$, then $T$ is locally finite. 
  
  If $T$ is locally finite, then the domain of the generator satisfies $\mathcal D=\mathcal P$ and $T$ can be extended to a group $(T_h)_{h\in\mathbb F}$ of linear operators such that $\deg(T_hp) \leq \deg(p)$ for any $h\in\mathbb F$, $p\in\mathcal P$ and $\deg(\mathcal Gp)\leq \deg(p)$ for any $p\in\mathcal P$.
\end{lemma}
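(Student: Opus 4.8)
The plan is to treat the three claims separately, the first two being short and the third carrying the real content.

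For the implication \emph{reducing $\Rightarrow$ locally finite}, I would first observe that the reducing property forces each diagonal block $T_h^{(n)}=\Pi_n T_h|_{\mathcal M_n}$ to be a scalar multiple of the identity. Indeed, for $p\in\mathcal M_n$ with $n\ge 1$ the reducing inequality $\deg(T_hp-cp)<n$ gives $\Pi_n(T_hp-cp)=0$, i.e.\ $T_h^{(n)}p=cp$; since every nonzero vector of $\mathcal M_n$ is thus an eigenvector, $T_h^{(n)}=c_n(h)I$ for a scalar $c_n(h)$. A short computation with the projections then shows $(T_h-c_n(h)I)\mathcal P_n\subseteq\mathcal P_{n-1}$ for every $n$ (with $\mathcal P_{-1}=\{0\}$, $c_0(h)=1$; indices with $\mathcal M_n=\{0\}$ are harmless since then $\mathcal P_n=\mathcal P_{n-1}$). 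As the operators $T_h-c_n(h)I$ are polynomials in $T_h$ and hence commute, the product $\prod_{n=0}^m(T_h-c_n(h)I)$ lowers $\mathcal P_m$ down through $\mathcal P_{m-1},\dots,\mathcal P_{-1}=\{0\}$, so it annihilates any $p$ with $\deg p\le m$. Expanding this product of $m+1$ commuting factors as $\sum_{k=0}^{m+1}\beta_k(h)T_h^k$ with $\beta_{m+1}=1$ exhibits a nontrivial linear relation among $\{T_h^kp:k=0,\dots,m+1\}$, valid for \emph{every} $h>0$ with the uniform bound $d=\deg(p)+1$; this is exactly local finiteness. The implication \emph{$\mathcal P_n$ finite-dimensional $\Rightarrow$ locally finite} is immediate: if $\deg p=m$ then all iterates $T_h^kp$ lie in the $T$-invariant space $\mathcal P_m$, so any $\dim\mathcal P_m+1$ of them are dependent, giving the uniform $d=\dim\mathcal P_m$.

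The core of the lemma is the last statement, and the decisive step is to show that for locally finite $T$ the orbit space $V_p:=\mathrm{span}\{T_tp:t\ge 0\}$ is finite-dimensional for each $p$. Here I would exploit the semigroup identity $T_h^k=T_{kh}$, which turns local finiteness into the statement that the cyclic space $C_h:=\mathrm{span}\{T_{kh}p:k\ge 0\}$ satisfies $\dim C_h\le d$ for all $h>0$. Because $T_{kh}p=T_{(2k)(h/2)}p$, one has the nesting $C_h\subseteq C_{h/2}\subseteq C_{h/4}\subseteq\cdots$, an increasing chain of subspaces of dimension at most $d$, which must therefore stabilise at some $C^\ast=C_{h^\ast}$. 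Then $C^\ast$ contains $T_tp$ for every $t$ in the set of dyadic multiples of $h^\ast$, which is dense in $[0,\infty)$. Invoking the right-continuity of $t\mapsto(T_tp)(x)$ together with the fact that a finite-dimensional subspace of $\mathcal F(E,\mathbb F)$ is closed under pointwise limits, I conclude $T_tp\in C^\ast$ for all $t\ge 0$, whence $V_p\subseteq C^\ast$ and $\dim V_p\le d$. I expect this passage --- upgrading dependence on the arithmetic progression $\{kh\}_k$ to finiteness of the full orbit --- to be the main obstacle, and the two ingredients that make it work are the semigroup trick $T_h^k=T_{kh}$ (to nest the cyclic spaces) and right-continuity (to fill in the non-dyadic times).

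Once $V_p$ is known to be finite-dimensional, the rest follows from finite-dimensional semigroup theory. The restriction $(T_t|_{V_p})_{t\ge 0}$ is a semigroup on the finite-dimensional, $T$-invariant space $V_p\subseteq\mathcal P_{\deg p}$, and pointwise right-continuity at $0$ makes it strongly continuous, hence a $C_0$-semigroup; in finite dimensions this means $T_t|_{V_p}=e^{tA_p}$ for a bounded generator $A_p$ on $V_p$. Consequently $\lim_{h\searrow 0}\frac{T_hp-p}{h}=A_pp$ exists in $V_p$, so $p\in\mathcal D$ and $\mathcal Gp=A_pp$; since $p\in\mathcal P$ was arbitrary this gives $\mathcal D=\mathcal P$, and $\mathcal Gp=A_pp\in V_p\subseteq\mathcal P_{\deg p}$ yields $\deg(\mathcal Gp)\le\deg p$. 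Finally, to extend $T$ to a group I would set $T_hp:=e^{hA_p}p=\sum_{k\ge 0}\frac{h^k}{k!}\mathcal G^kp$ for $h\in\mathbb F$; because $\mathcal G$ restricts to $A_p$ on the invariant space $V_p$, each such series converges in $V_p$, defines a linear operator agreeing with the original $T_h$ for $h\ge 0$, satisfies $T_hT_k=T_{h+k}$ for all $h,k\in\mathbb F$ (all computations taking place inside $V_p$, where $T_h=e^{hA_p}$), and keeps $T_hp\in V_p\subseteq\mathcal P_{\deg p}$, i.e.\ $\deg(T_hp)\le\deg p$. This completes the three assertions.
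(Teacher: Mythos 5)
Your proof is correct and follows essentially the same route as the paper's: the trivial containment argument when $\mathcal P_n$ is finite dimensional, the nested dyadic cyclic spaces combined with right-continuity and the closedness of finite-dimensional subspaces of $\mathcal F(E,\mathbb F)$ under pointwise limits to produce a finite-dimensional $T$-invariant space $V_p$, and then finite-dimensional $c_0$-semigroup theory to obtain $T_t|_{V_p}=e^{tA_p}$, the identity $\mathcal D=\mathcal P$, and the group extension with the degree bounds. The only real deviation is in the step ``reducing $\Rightarrow$ locally finite'', where you first prove that each diagonal block $T_h^{(n)}$ is a scalar multiple of the identity (a fact the paper only establishes later, in Corollary \ref{c:reducing compute}) and then expand the annihilating product $\prod_{n=0}^{m}(T_h-c_n(h)I)$, whereas the paper chooses the constants adaptively along the orbit via $q_{j+1}:=T_hq_j-c_jq_j$; both arguments amount to exhibiting the same nontrivial dependence among $\{T_h^kp:k=0,\dots,\deg(p)+1\}$.
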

\begin{proof}
  We first assume that $T$ is reducing. Let $p\in\mathcal P$ and denote $n:=\deg(p)+1$. We claim that $\{T_h^kp: k=0,\dots,n\}$ is linear dependent for any $h>0$. To this end, let $h>0$. Using the reducing property, we choose recursively $c_0,c_1,\dots,c_n\in\mathbb F$, $q_0,\dots,q_n\in\mathcal P$ such that
   \begin{align*}
     q_0 := p, \\
     \deg(T_hq_j-c_jq_j) &< \max\{1,\deg(q_j)\}, \\
     q_{j+1} := T_hq_j - c_jq_j
\end{align*}    
for $j=0,\dots,n-1$. Then $\deg(q_0) = n-1$, $\deg(q_{j+1}) < \max\{1,\deg(q_j)\}$ and, hence, $\deg(q_{n-1}) = 0$. Also observe from this construction that
 $$\mathrm{Span}\{T_h^kp: k=0,\dots,m\} = \mathrm{Span}\{q_0,\dots,q_{m}\}$$
 for any $m=0,\dots,n$. Since $q_{n-1}\in\mathcal P_0$ we find $T_hq_{n-1} = q_{n-1}$ and therefore we may have $c_{n-1} = 1$ and $q_n=0$. Thus we find
  \begin{align*} \mathrm{Span}\{T_h^kp: k=0,\dots,n\} &= \mathrm{Span}\{q_0,\dots,q_{n}\} \nonumber \\
  &= \mathrm{Span}\{q_0,\dots,q_{n-1}\}  \\ &= \mathrm{Span}\{T_h^kp: k=0,\dots,n-1\} \nonumber
  \end{align*} 
  which shows that $\{T_h^kp: k=0,\dots,n\}$ is linear dependent. Thus, $T$ is locally finite. 

Now, we assume that $\mathcal P_n$ is finite dimensional for any $n\in\mathbb N$ and show that $T$ is locally finite. Let $p\in\mathcal P$ and define $n:=\deg(p)$ and $d:=\dim(\mathcal P_n)$. Then the set $\{T_h^kp:k=0,\dots,d\}$ is linear dependent because it is contained in the $d$-dimensional space $\mathcal P_n$. Hence, $T$ is locally finite.
     
Finally, we only assume that $T$ is locally finite. We have $\mathcal D\subseteq\mathcal P$ by definition. Now let $p\in\mathcal P$ and choose $n\in\mathbb N$ such that $\{T_h^kp: k=0,\dots,n\}$ is linear dependent for any $h>0$. For $h>0$ we define $\mathcal P^h_p := \mathrm{Span}\{T_h^kp: k=0,\dots,n\}$. 
Then $\mathcal P^h_p$ is invariant under $T_h$ due to the linear dependence property and $\dim(\mathcal P^h_p) \leq n$ 
for any $h>0$. 
From $T_h=T^2_{h/2}$ and the $T_{h/2}$-invariance property we find that $\mathcal P^{h/2}_p\supseteq \mathcal P^h_p$. Thus,
   $$ \mathcal P^0_p := \bigcup_{l\in\mathbb N} \mathcal P^{2^{-l}}_p $$
  contains $\mathcal P^{2^{-l}}_p$ for any $l\in\mathbb N$ and due to the increase of the spaces we find
  $$ \dim(\mathcal P^0_p) = \lim_{l\rightarrow\infty}\dim(\mathcal P^{2^{-l}}_p) \leq n. $$
Therefore $\mathcal P^0_p$ is a finite dimensional subspace of $\mathcal P$ which contains $p$. Moreover, $\mathcal P^{k2^{-l}}_p\subset \mathcal P^{2^{-l}}_p$ for any $k\in \mathbb{N}$ and thus $\mathcal P^0_p$ is $T_h$-invariant for any dyadic number $h\geq0$. 
  
  We now want to see that $\mathcal P^0_p$ contains $\mathcal P^h_p$ for any $h>0$. To this end, let $h>0$ and $(\eta_l)_{l\in\mathbb N}$ be a dyadic decreasing sequence which converges to $h$. By the right-continuity property of $T$ (see Definition \ref{def:poly-action}, property (4)), we find $\mathcal P^h_p\subseteq \mathcal P^0_p$ as required. 
  Thus, $\mathcal P^0_p$ is a finite dimensional $T$-invariant subspace of $\mathcal P$ which contains $p$. 
  Hence, $\mathcal T^{(p)} := T|_{\mathcal P^0_p}$ defines a semigroup of linear operators such that $h\mapsto\mathcal T^{(p)}_hq(x)$ is right-continuous for any $x\in E$, $q\in \mathcal P^0_p$. $E$ is separating for $\mathcal P_p^0$, i.e.\ for any $q_1,q_2\in\mathcal P_p^0$ there is $x\in E$ with $q_1(x)\neq q_2(x)$ because $\mathcal P^0_p$ is a subset of functions from $E$ to $\mathbb F$. Thus, $(\delta_x)_{x\in E}$ where $\delta_x:\mathcal P^0_p\rightarrow\mathbb F$ generates the dual space of $\mathcal P^0_p$ and we see that $\mathcal T^{(p)}$ is a weakly right-continuous $c_0$-semigroup on the finite-dimensional space $\mathcal P^0_p$ (Note that due to the finite dimensionality of $\mathcal P^0_p$ all norms on it are equivalent and we simply use any norm on it). Hence $\mathcal T^{(p)}$ is norm-continuous and there is a linear operator $\mathcal G^{(p)}$ on $\mathcal P^0_p$ such that $\mathcal T^{(p)}_h = \exp(h\mathcal G^{(p)})$ for any $h>0$. We find
   $$ \frac{T_hp -p}{h} = \frac{\mathcal T^{(p)}_hp -p}{h} \underset{h\searrow0}{\longrightarrow} \mathcal G^{(p)}p$$
   and, hence, $p\in\mathcal D$ and $\mathcal Gp = \mathcal G^{(p)}p$. Also, the right-hand side of the representation
    $$ T_hp =\mathcal T^{(p)}_hp = \exp(h \mathcal G^{(p)})p $$
    yields an extension to any $h\in\mathbb F$ with the required degree invariance property.
\end{proof}

We next prove a similar result as Lemma \ref{l:T triangle} for the generator $\mathcal G$ of the action operator.
\begin{proposition}\label{p:G triangle}
  Assume that $T$ is locally finite. 
  Then we have
   $$ \mathcal G = \sum_{n=1}^\infty \sum_{l=1}^n \Pi_l \mathcal G \Pi_n = \sum_{n=1}^\infty \left(\mathcal G^{(n)}\Pi_n + \sum_{l=1}^{n-1} \Pi_l \mathcal G \Pi_n\right)$$ 
   where $\mathcal G^{(n)}:=\Pi_n \mathcal G|_{\mathcal M_n}$, and $\mathcal M_n$ and 
   $\Pi_n$ are defined in Definition \ref{def:projections}. In particular, $\deg(\mathcal Gp)\leq \deg(p)$ for any $p\in\mathcal P$.
\end{proposition}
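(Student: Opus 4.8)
The plan is to obtain this as the infinitesimal analogue of the block-triangular decomposition of $T_h$ in Lemma \ref{l:T triangle}. The only genuinely new input I need is that $\mathcal G$ is defined on all of $\mathcal P$ and does not raise the degree, and this is exactly what local finiteness provides through Lemma \ref{l:D=P}. So the first thing I would do is record the two structural facts on which everything rests: since $T$ is locally finite, Lemma \ref{l:D=P} gives $\mathcal D = \mathcal P$, so that $\mathcal G p$ is well defined for \emph{every} $p\in\mathcal P$, and moreover $\deg(\mathcal G p)\le\deg(p)$, equivalently $\mathcal G\mathcal M_n = \mathcal G\Pi_n\mathcal P \subseteq \mathcal P_n$; and Remark \ref{rem:T_id_on_const} shows $\mathcal G$ annihilates constants, i.e.\ $\mathcal G\Pi_0 = 0$.

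Next comes the projection bookkeeping, identical in spirit to the proof of Lemma \ref{l:T triangle}. Fix $p\in\mathcal P$ and put $k:=\deg(p)$; then $\Pi_n p = 0$ for $n>k$, so $p = \sum_{n=0}^\infty \Pi_n p$ is a finite sum. Because $\mathcal D=\mathcal P$, linearity of $\mathcal G$ applies termwise, and inserting the identity $I=\sum_l\Pi_l$ afterwards gives
$$ \mathcal G p = \sum_{n} \mathcal G\Pi_n p = \sum_{n}\sum_{l} \Pi_l\,\mathcal G\,\Pi_n\, p, $$
with only finitely many non-vanishing summands for each argument. The two facts above then truncate this double sum in two ways: the column $n=0$ disappears because $\mathcal G\Pi_0=0$, forcing the outer sum to start at $n=1$; and $\Pi_l\mathcal G\Pi_n = 0$ whenever $l>n$, since $\mathcal G\Pi_n\mathcal P\subseteq\mathcal P_n$ while $\Pi_l\mathcal P_n=\{0\}$ for $l>n$, which bounds the inner sum by $l\le n$. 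Splitting off the diagonal $l=n$ term yields the two displayed expressions, and reading off the largest surviving index $l\le n=\deg(p)$ re-proves the ``in particular'' claim $\deg(\mathcal G p)\le\deg(p)$. As a consistency check I would also verify that the diagonal block $\mathcal G^{(n)}=\Pi_n\mathcal G|_{\mathcal M_n}$ is precisely the generator of the diagonal semigroup $T^{(n)}$ of Lemma \ref{l:T triangle}, since $\Pi_n$ commutes through the difference quotient $h^{-1}(T_h-I)$ restricted to $\mathcal M_n$.

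I do not expect a serious obstacle here. Once Lemma \ref{l:D=P} supplies global definedness of $\mathcal G$ together with its degree-invariance, the decomposition is a formal consequence of the upper-block-triangular shape already established for $T_h$. The only point demanding a little care is justifying the termwise linearity and the interchange of $\mathcal G$ with the finite projection sums; this is clean precisely because every $p$ sits in a finite-dimensional $T$-invariant (hence $\mathcal G$-invariant) subspace $\mathcal P_p^0$, so all manipulations involve finite sums and the defining limit of $\mathcal G$ genuinely exists on that subspace. The substantive work has therefore already been discharged in Lemma \ref{l:D=P}, and what remains is essentially organisational.
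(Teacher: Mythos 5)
Your argument is correct and is essentially the paper's own proof: the paper simply declares the result ``immediate from Lemmas \ref{l:D=P} and \ref{l:T triangle}, after noticing that $\Pi_l\Pi_np=0$ when $l\neq n$'', and the projection bookkeeping you spell out (global definedness of $\mathcal G$, $\deg(\mathcal Gp)\le\deg(p)$, $\mathcal G\Pi_0=0$, and $\Pi_l\mathcal G\Pi_n=0$ for $l>n$) is exactly that omitted computation. One caveat: your derivation correctly produces the inner sum starting at $l=0$, and the terms $\Pi_0\mathcal G\Pi_n$ for $n\ge1$ do \emph{not} vanish in general (e.g.\ $\Pi_0\mathcal G x^2=1$ for Brownian motion, and the matrix displayed after the proposition has $\Pi_0\mathcal G$ in its first row), so the stated lower limit $l=1$ is a typo in the proposition rather than something your proof needs to --- or could --- establish; you should not paper over this by asserting that your (correct) formula ``yields the two displayed expressions''.
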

\begin{proof}
  This is immediate from Lemmas \ref{l:D=P} and \ref{l:T triangle}, after noticing that $\Pi_l\Pi_np=0$ when $l<n$. 
\end{proof}
Rewriting the last equality in Proposition \ref{p:G triangle} as a matrix operator on $\mathcal M_0\oplus \mathcal M_1\oplus \mathcal M_2\oplus \cdots$ yields
 $$ \mathrm{matrix}(\mathcal G) = \left(\begin{matrix}
   0&\Pi_0\mathcal G&\Pi_0\mathcal G& \Pi_0\mathcal G & \cdots\\
   0& \mathcal G^{(1)} & \Pi_1\mathcal G & \Pi_1\mathcal G & \cdots \\
   0 & 0 & \mathcal G^{(2)} & \Pi_2\mathcal G &\cdots \\
   &\vdots& &\ddots
  \end{matrix}\right).$$
We remark in passing that the upper-triangular form of the generator matrix operator is analogous to what has been found for finite-dimensional polynomial processes, i.e.\ polynomial moments can be expressed in terms of matrix exponentials, see Cuchiero \cite[Theorem 4.18(ii)]{cuchiero.dissertation}.

Our next result shows that 
if $T$ is locally finite, then the semigroup $(T_h)_{h\geq 0}$ can be recovered from its generator $\mathcal G$. In principle it shows that $T_h = \exp(h\mathcal G)$ where $\exp$ is understood as a power-series and powers of $\mathcal G$ are meant in terms of composition.
\begin{proposition}\label{p:Matrix exponential}
  Assume that $T$ is locally finite. Then we have
   $$ \sum_{n=0}^\infty \frac{h^n}{n!} \mathcal G^np(x) = T_hp(x)$$
   for any $p\in\mathcal P$, $x\in E$, $h\geq 0$. Moreover, the extension as a power series converges for any $h\in\mathbb F$ and defines in this way an $\mathbb F$-parametrised group of operators.
\end{proposition}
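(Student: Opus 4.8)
The plan is to localise everything to the finite-dimensional invariant subspaces produced in the proof of Lemma~\ref{l:D=P} and then invoke the classical matrix-exponential representation of a norm-continuous semigroup. Fix $p\in\mathcal P$. From the proof of Lemma~\ref{l:D=P} we obtain a finite-dimensional, $T$-invariant subspace $\mathcal P^0_p\subseteq\mathcal P$ containing $p$, on which $\mathcal T^{(p)}:=T|_{\mathcal P^0_p}$ is a norm-continuous $c_0$-semigroup with a bounded generator $\mathcal G^{(p)}$ satisfying $\mathcal T^{(p)}_h=\exp(h\mathcal G^{(p)})$, understood as a power series in $\mathrm{End}(\mathcal P^0_p)$ that converges for every $h\in\mathbb F$.

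The key step is to identify $\mathcal G^{(p)}$ with the restriction of the global generator $\mathcal G$. Since $T$ is locally finite, Lemma~\ref{l:D=P} gives $\mathcal D=\mathcal P$, so every $q\in\mathcal P^0_p$ lies in the domain of $\mathcal G$. For such $q$ the defining pointwise limit of $\mathcal Gq(x)$ coincides with the $x$-evaluation of the norm-limit defining $\mathcal G^{(p)}q$; as $\mathcal G^{(p)}q\in\mathcal P^0_p\subseteq\mathcal P$, this shows $\mathcal Gq=\mathcal G^{(p)}q$ and, in particular, that $\mathcal P^0_p$ is $\mathcal G$-invariant with $\mathcal G|_{\mathcal P^0_p}=\mathcal G^{(p)}$. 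By induction $\mathcal G^np=(\mathcal G^{(p)})^np$ for every $n$, so the series $\sum_n\tfrac{h^n}{n!}\mathcal G^np$ is literally the matrix-exponential series $\exp(h\mathcal G^{(p)})p$ computed inside the finite-dimensional space $\mathcal P^0_p$.

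It then remains to read off the two conclusions. Because $\mathcal P^0_p$ is finite-dimensional, the series converges in norm for every $h\in\mathbb F$, and the evaluation functional $\delta_x:\mathcal P^0_p\rightarrow\mathbb F$ is continuous, hence commutes with the sum, yielding $\sum_n\tfrac{h^n}{n!}\mathcal G^np(x)=\exp(h\mathcal G^{(p)})p(x)=\mathcal T^{(p)}_hp(x)=T_hp(x)$ for $h\geq 0$, which is the claimed identity. For general $h\in\mathbb F$ the same series defines the extended operators, and since for fixed $p$ all relevant iterates and images remain inside the single $\mathcal G$-invariant space $\mathcal P^0_p$, the matrix-exponential group law $\exp((h+k)\mathcal G^{(p)})=\exp(h\mathcal G^{(p)})\exp(k\mathcal G^{(p)})$ transfers verbatim to the $\mathbb F$-parametrised family, matching the group extension already furnished by Lemma~\ref{l:D=P}.

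The only genuinely delicate point I anticipate is the identification $\mathcal G|_{\mathcal P^0_p}=\mathcal G^{(p)}$: one must track carefully that the global generator is defined through a \emph{pointwise} limit, whereas $\mathcal G^{(p)}$ arises from a \emph{norm} limit on a finite-dimensional space. These agree precisely because on $\mathcal P^0_p$ norm convergence and pointwise convergence coincide, and because $\mathcal D=\mathcal P$ guarantees the membership in the domain needed to interpret the limit as an element of $\mathcal P$. Everything downstream is then routine finite-dimensional linear algebra.
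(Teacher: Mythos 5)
Your proposal is correct and follows essentially the same route as the paper's proof: restrict to a finite-dimensional $T$-invariant (hence $\mathcal G$-invariant) subspace containing $p$, identify the restricted generator with $\mathcal G$, represent $T_h$ there as a matrix exponential converging in operator norm for all $h\in\mathbb F$, and use continuity of the evaluation functionals $\delta_x$ on a finite-dimensional space to pass to pointwise convergence. The only cosmetic difference is that the paper works with $\mathrm{Span}\{\mathcal G^np:n\in\mathbb N\}$ rather than the full invariant space $\mathcal P^0_p$, and your explicit reconciliation of the pointwise and norm limits defining the two generators is a detail the paper leaves implicit.
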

\begin{proof}
  Lemma \ref{l:D=P} yields that $\mathcal D=\mathcal P$. Thus $\mathcal G$ is a linear operator on $\mathcal P$ and it can be applied arbitrary often to an element $p\in\mathcal P$. Let $p\in\mathcal P$ and define $\mathcal P_p := \mathrm{Span}\{\mathcal G^np: n\in\mathbb N\}$. Recall from the proof of Lemma \ref{l:D=P} that $\mathrm{Span}\{T_h^np: n\in\mathbb N\}$ is finite dimensional. Thus, $\mathcal P_p$ is finite dimensional. Choose any norm $\|\cdot\|_p$ on $\mathcal P_p$ and note that
   $$ \delta_x:\mathcal P_p\rightarrow\mathbb F, p\mapsto p(x) $$
   is a linear map defined on a finite dimensional normed space and hence continuous. On $\mathcal P_p$ we have $T_h|_{\mathcal P_p} = \exp(h\mathcal G|_{\mathcal P_p})$ where the exponential function is meant as a power series and the convergence is in operator norm. In particular we find
    $$ \sum_{n=0}^k \frac{h^n}{n!} \mathcal G^np \rightarrow T_hp,\quad k\rightarrow \infty $$
  in the normed space $\mathcal P_p$ and continuity of $\delta_x$ yields that
   $$ \sum_{n=0}^k \frac{h^n}{n!} \mathcal G^np(x) \rightarrow T_hp(x),\quad k\rightarrow \infty $$
    in $\mathbb F$. 
\end{proof}
Proposition \ref{p:Matrix exponential} is a dual-moment formula as in Cuchiero and Svaluto-Ferro \cite{CS-F}. The same is true for Corollary \ref{c:strongly reducing} below. The difference is that we require in this version locally finiteness but in return we do not need to assume the existence and uniqueness of the corresponding Cauchy problem and no additional technical conditions (in our case, existence and uniqueness follow under locally finiteness).

Since it is possible to recover the polynomial action $(T_h)_{h\geq0}$ from the generator, it is also possible to construct it in this way.
\begin{proposition}\label{p:local finite construction}
  Let $\mathcal G:\mathcal P\rightarrow \mathcal P$ be any linear operator such that
   \begin{enumerate}
       \item $\mathcal G\mathcal P_n\subseteq \mathcal P_n$ for any $n\geq 0$ and
       \item For any $p\in\mathcal P$ there is a finite dimensional subspace $V_p\subseteq \mathcal P$ such that $\mathcal G(V_p)\subseteq V_p$ and $p\in V_p$.
   \end{enumerate}
   Define $T_h$ by
     $$ T_hp(x) := \sum_{n=0}^\infty \frac{h^n}{n!} \mathcal G^np(x)$$
   for any $p\in\mathcal P$, $x\in E$, $h\geq 0$, where absolute convergence of the series is implied by the assumptions. Then $(T_h)_{h\geq 0}$ is a locally finite polynomial action.
\end{proposition}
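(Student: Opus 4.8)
The plan is to reduce everything to a finite-dimensional matrix-exponential computation, exactly in the spirit of the proofs of Lemma \ref{l:D=P} and Proposition \ref{p:Matrix exponential}. Fix $p\in\mathcal P$ and let $V_p$ be the finite-dimensional $\mathcal G$-invariant subspace containing $p$ provided by assumption (2). Since $V_p$ is $\mathcal G$-invariant and $p\in V_p$, all iterates $\mathcal G^np$ lie in $V_p$. Equipping the finite-dimensional space $V_p$ with any norm $\|\cdot\|_p$, the restriction $\mathcal G|_{V_p}$ is a bounded operator, so $\exp(h\mathcal G|_{V_p})=\sum_{n\geq0}\frac{h^n}{n!}(\mathcal G|_{V_p})^n$ converges in operator norm for every $h\in\mathbb F$, and the series $\sum_{n\geq0}\frac{h^n}{n!}\mathcal G^np$ converges in $(V_p,\|\cdot\|_p)$. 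Since the evaluation functional $\delta_x:q\mapsto q(x)$ is linear on the finite-dimensional space $V_p$ and hence continuous, applying $\delta_x$ yields both the claimed absolute convergence of $\sum_{n\geq0}\frac{h^n}{n!}\mathcal G^np(x)$ in $\mathbb F$ and the identification $T_hp=\exp(h\mathcal G|_{V_p})p\in V_p\subseteq\mathcal P$. In particular $T_hp$ is a well-defined element of $\mathcal P$.

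Next I would verify the four defining properties of a polynomial action. Property (2) is immediate, as the $n=0$ term gives $T_0p=p$. For property (4) I would use that $h\mapsto\exp(h\mathcal G|_{V_p})$ is norm-continuous, so $h\mapsto T_hp$ is continuous in $(V_p,\|\cdot\|_p)$ and, composing with the continuous functional $\delta_x$, the map $h\mapsto T_hp(x)$ is continuous, in particular right-continuous. For the semigroup property (3), the key observation is that for \emph{every} $q\in V_p$ the same subspace $V_p$ is finite-dimensional, $\mathcal G$-invariant and contains $q$, so the argument above gives $T_hq=\exp(h\mathcal G|_{V_p})q$; that is, $T_h|_{V_p}=\exp(h\mathcal G|_{V_p})$ as operators on $V_p$. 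Since $T_kp\in V_p$, the group law for matrix exponentials then gives $T_hT_kp=\exp(h\mathcal G|_{V_p})\exp(k\mathcal G|_{V_p})p=\exp((h+k)\mathcal G|_{V_p})p=T_{h+k}p$.

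The degree property (1) is the one point where assumption (1) is genuinely used, and it is the step I expect to require the most care, since $\mathcal P_m$ is $\mathcal G$-invariant but generally infinite-dimensional. Writing $m:=\deg(p)$, I would pass to the subspace $W:=V_p\cap\mathcal P_m$. This $W$ is finite-dimensional, contains $p$, and is again $\mathcal G$-invariant: if $v\in W$ then $\mathcal Gv\in V_p$ by invariance of $V_p$ and $\mathcal Gv\in\mathcal P_m$ by assumption (1), hence $\mathcal Gv\in W$. Running the construction of the first paragraph with $W$ in place of $V_p$ identifies $T_hp=\exp(h\mathcal G|_W)p\in W\subseteq\mathcal P_m$, so $\deg(T_hp)\leq m=\deg(p)$.

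Finally, for local finiteness I would take $d:=\dim V_p$. Because $V_p$ is $T_h$-invariant, inductively $T_h^kp\in V_p$ for all $k\in\mathbb N$, so the $d+1$ elements $\{T_h^kp:k=0,\dots,d\}$ all lie in the $d$-dimensional space $V_p$ and are therefore linearly dependent, for every $h>0$; this is precisely the definition of local finiteness with bound $d$. Collecting the four properties together with local finiteness completes the proof.
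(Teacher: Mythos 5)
Your proposal is correct and follows essentially the same route as the paper: define $T_h$ on the finite-dimensional $\mathcal G$-invariant subspace $V_p$ via the matrix exponential and identify $T_hp$ pointwise through the continuous evaluation functionals $\delta_x$. The paper stops there and declares the verification of the polynomial-action axioms ``straightforward,'' whereas you carry it out explicitly; your observation that the degree bound follows by passing to the finite-dimensional $\mathcal G$-invariant subspace $V_p\cap\mathcal P_{\deg(p)}$ is a worthwhile detail the paper leaves implicit.
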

\begin{proof}
  Let $p\in\mathcal P$ and $V_p$ be finite-dimensional $\mathcal G$-invariant containing $p$. Choose any norm $\|\cdot\|_{p}$ on $V_p$. Then $\mathcal G|_{V_p}$ is a continuous linear operator on $V_p$. Thus, its matrix exponential
   $$ \exp(h \mathcal G|_{V_p}) = \sum_{n=0}^\infty \frac{h^n}{n!} (\mathcal G|_{V_p})^n $$
  is well defined, where the convergence is in operator norm, and hence we have
   $$ T_hp(x) := \sum_{n=0}^\infty \frac{h^n}{n!} \mathcal G^np(x)$$
   is well defined with 
    $$T_h|_{V_p} = \sum_{n=0}^\infty \frac{h^n}{n!} (\mathcal G|_{V_p})^n $$ 
   for any $h\geq 0$. It follows straightforwardly from the construction that $(T_h)_{h\geq 0}$ is a locally finite polynomial action.
\end{proof}

If $T$ is reducing, then the operators $\mathcal G^{(n)}$ appearing in Proposition \ref{p:G triangle} are in fact multiples of the identity. In particular, $\mathcal G$ can be written as an upper triangular matrix.
\begin{corollary}\label{c:reducing compute}
  Assume that $T$ is reducing and recall $T_h^{(n)}$
  defined in Lemma \ref{l:T triangle} and $\mathcal G^{(n)}$ defined in Proposition \ref{p:G triangle}. 
  Then there exist constants $\lambda_n\in\mathbb F$ such that 
   \begin{align*}
      \mathcal G^{(n)} f &= \lambda_n f, \\
      T^{(n)}_hf &= e^{h\lambda_n}f, \\
      \deg(T_h f - e^{h\lambda_n}f) &< \max\{1,\deg(f)\},
   \end{align*}    
   for any $h\geq 0$, $n\in\mathbb N$, $f\in\mathcal M_n$. Note that $\lambda_0 = 0$.
\end{corollary}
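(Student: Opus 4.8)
The plan is to exploit the elementary linear-algebra fact that a linear operator on an $\mathbb F$-vector space for which every nonzero vector is an eigenvector must be a scalar multiple of the identity. The reducing hypothesis is precisely what forces every nonzero $f\in\mathcal M_n$ to be such an eigenvector of the diagonal block $T_h^{(n)}$, and from there the semigroup structure recorded in Lemma \ref{l:T triangle} upgrades the resulting scalars to a single exponential $e^{h\lambda_n}$.

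First I would dispose of $n=0$: by Remark \ref{rem:T_id_on_const} the action $T_h$ is the identity on $\mathcal P_0=\mathcal M_0$, so $T_h^{(0)}=I$ and $\mathcal G^{(0)}=0$, and the choice $\lambda_0=0$ makes all three identities trivially true (the last because $\deg(0)=0<1$). Now fix $n\geq 1$ and a nonzero $f\in\mathcal M_n$; then $\deg(f)=n$ since $f\in\mathcal P_n\setminus\mathcal P_{n-1}$. Applying the reducing property (Definition \ref{d:loc finite}) to $f$ at height $h$ produces $c=c(f,h)\in\mathbb F$ with $\deg(T_hf-cf)<\max\{1,n\}=n$, i.e. $\Pi_n(T_hf-cf)=0$. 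Since $\Pi_nf=f$, this reads $T_h^{(n)}f=\Pi_nT_hf=cf$. Thus every nonzero $f\in\mathcal M_n$ is an eigenvector of $T_h^{(n)}$, and the scalar lemma gives $T_h^{(n)}=c_n(h)I$ for a scalar $c_n(h)$ independent of $f$.

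It remains to identify $c_n(h)$ and pass to the generator. Because $T$ is reducing it is locally finite (Lemma \ref{l:D=P}), so $\mathcal D=\mathcal P$ and each $f$ lies in a finite-dimensional $T$-invariant subspace on which $T$ acts as a norm-continuous semigroup; on that finite-dimensional space the projection $\Pi_n$ is automatically continuous, which makes $h\mapsto c_n(h)$ right-continuous. Combined with the semigroup identity $T_{h+k}^{(n)}=T_h^{(n)}T_k^{(n)}$ from Lemma \ref{l:T triangle} and $c_n(0)=1$, the multiplicative Cauchy equation $c_n(h+k)=c_n(h)c_n(k)$ yields $c_n(h)=e^{h\lambda_n}$ for a unique $\lambda_n\in\mathbb F$. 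Differentiating at $h=0$ (equivalently, applying $\Pi_n$ to $\mathcal Gf=\lim_{h\searrow0}(T_hf-f)/h$ inside the finite-dimensional invariant subspace) gives $\mathcal G^{(n)}f=\lim_{h\searrow0}\tfrac{c_n(h)-1}{h}f=\lambda_n f$, hence $\mathcal G^{(n)}=\lambda_nI$. Alternatively, I could read off the exponential directly from Proposition \ref{p:Matrix exponential}: since the diagonal blocks of $\mathcal G^k$ are $(\mathcal G^{(n)})^k$ for the upper-triangular generator of Proposition \ref{p:G triangle}, the series for $T_h$ restricts on $\mathcal M_n$ to $\exp(h\mathcal G^{(n)})$. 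Finally, for the third identity, $T_hf-e^{h\lambda_n}f\in\mathcal P_n$ has vanishing $\Pi_n$-component ($=T_h^{(n)}f-e^{h\lambda_n}f=0$), so it lies in $\mathcal P_{n-1}$, giving $\deg(T_hf-e^{h\lambda_n}f)<n=\max\{1,n\}$.

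The only genuinely delicate point is the analytic step that turns the pointwise scalars $c_n(h)$ into the exponential $e^{h\lambda_n}$ and simultaneously identifies them with the generator, which amounts to knowing that the diagonal block of the semigroup is the exponential of the diagonal block of the generator. Both the right-continuity of $c_n$ and the interchange of $\Pi_n$ with the generator limit rest on local finiteness, which confines everything to a finite-dimensional invariant subspace where all linear maps are continuous and the block-triangular exponential identity is the standard one. Once that is in place the remaining steps are purely formal.
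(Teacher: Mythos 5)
Your proof is correct and follows essentially the same route as the paper: use the reducing property to show every nonzero $f\in\mathcal M_n$ is an eigenvector of the diagonal block, invoke the all-eigenvectors-implies-scalar fact, obtain the exponential from the semigroup law plus right-continuity (justified via local finiteness from Lemma \ref{l:D=P}), and read off $\mathcal G^{(n)}=\lambda_n I$. The only cosmetic difference is that you apply the scalar-operator lemma to $T_h^{(n)}$ and then differentiate, whereas the paper first derives $\mathcal G^{(n)}f=\lambda_n f$ for each fixed $f$ and applies the scalar-operator argument at the level of the generator before recovering $T_h^{(n)}=e^{h\lambda_n}I$ from Proposition \ref{p:Matrix exponential}.
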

\begin{proof}
  Recall from Lemma \ref{l:D=P} that $T$ is locally finite whenever it is reducing, and thus $\mathcal D=\mathcal P$. Let $f\in \mathcal M_n$. By assumption on $T$ there are constants $c_h\in\mathbb F$ for any $h\geq 0$ such that $\deg(T_hf-c_hf) < \max\{1,\deg(f)\}$. If $\deg(f) = 0$, then Proposition \ref{p:G triangle} yields that $\mathcal Gf = 0$ and Proposition \ref{p:Matrix exponential} reveals that $T_hf=f$ and so $\lambda_0=0$ satisfies the claim. 
  
  Thus we may assume that $\deg(f)\neq 0$ and, hence, $\max\{1,\deg(f)\}=\deg(f)=n$. Recall from Definition \ref{def:projections} the linear projection operator $\Pi_n:\mathcal P\rightarrow \mathcal M_n$ with kernel $\mathcal P_{n-1}\oplus \sum_{k=n+1}^\infty \mathcal{M}_k$ for $n\geq 0$, where $\mathcal P_{-1} := \{0\}$. We find that $\Pi_n(T_hf-c_hf) = 0$ because $\deg(T_hf-c_hf) < n$. Consequently, we have $T_h^{(n)}f = \Pi_nT_hf = c_hf$. By semigroup property, we find $c_{h+k}f=T_h^{(n)}T_k^{(n)} f = c_hc_k f$, which shows that $c_{h+k}=c_hc_k$. Since, for any $x\in E$, it holds,
  $$c_{h+k}f(x) = T^{(n)}_{h+k}f(x) \underset{k\rightarrow0}\longrightarrow T^{(n)}_{h}f(x) = c_hf(x)$$
 we can see that $c$ is a right-continuous semigroup of elements in $\mathbb F$. Hence, there is some $\lambda_n\in\mathbb F$ with $c_h = \exp(h\lambda_n)$. Moreover, we have for any $x\in E$
  $$ \mathcal G^{(n)}f(x) = \lim_{h\searrow 0} \frac{T^{(n)}_hf(x)-f(x)}{h} = \lambda_n f(x) $$
  and, hence, $\mathcal G^{(n)}f = \lambda_n f$.
  
  Now let $g\in\mathcal M_n$ and $\psi_n\in\mathbb F$ such that $\mathcal G^{(n)}g = \psi_ng$. We find that there is a constant $\eta_n\in\mathbb F$ with $\mathcal G^{(n)}(f+g) = \eta_n(f+g)$ and, hence,
   $$ \lambda_nf+\psi_ng = \mathcal G^{(n)}(f+g) = \eta_nf + \eta_n g.$$
  This, however, is only possible if $\lambda_n=\psi_n=\eta_n$ which shows that $\mathcal G^{(n)} = \lambda_n I$ where $I$ is the identity operator on $\mathcal M_n$. From the exponential formula in Proposition \ref{p:Matrix exponential} we find that
   $$ T_h^{(n)}f = e^{h\lambda_n}f $$
  for any $n\in\mathbb N$, $f\in\mathcal M_n$, $h\geq 0$. The result follows. 
\end{proof}

For strongly reducing polynomial processes one can give a nice formula for arbitrary polynomial moments.
\begin{corollary}\label{c:strongly reducing}
  Assume that $T$ is strongly reducing and let $\lambda_0,\lambda_1\ldots\in\mathbb F$ be as in Corollary \ref{c:reducing compute}. Then $\lambda_n = 0$ for any $n\geq 0$ and one has $\deg(T_hp-p)<\deg(p)$ for any $p\in\mathcal P\backslash \mathcal P_0$. In particular one has
   $$ T_hp = \sum_{k=0}^n \frac{h^k}{k!}\mathcal G^kp $$
  for any $n\geq 0$, $h\geq 0$ and $p\in\mathcal P_n$.
\end{corollary}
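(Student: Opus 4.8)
The plan is to exploit that a strongly reducing action is in particular reducing (the single constant $c$ serves as the per-$p$ constant), so that Corollary \ref{c:reducing compute} applies and hands us the constants $\lambda_n$ together with $T_h^{(n)}f = e^{h\lambda_n}f$ and $\mathcal G^{(n)}f = \lambda_n f$ for $f\in\mathcal M_n$. The additional strength of the hypothesis — one constant $c$ valid for every $p$ simultaneously — is exactly what will force the $\lambda_n$ to vanish. First I would fix, for some $n\geq 1$ with $\mathcal M_n\neq\{0\}$, a nonzero $f\in\mathcal M_n$; if no such $n$ exists then $\mathcal P=\mathcal P_0$ and every assertion is trivial, so this degenerate case is disposed of at the outset. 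Applying $\Pi_n$ to $T_hf-cf$ and using $\deg(T_hf-cf)<n$ gives $\Pi_nT_hf = cf$, i.e.\ $T_h^{(n)}f = cf$. Comparing with $T_h^{(n)}f = e^{h\lambda_n}f$ and cancelling the nonzero $f$ yields $e^{h\lambda_n}=c$ for every $h\geq 0$; evaluating at $h=0$ forces $c=1$, whence $e^{h\lambda_n}\equiv 1$ forces $\lambda_n=0$. Since the constant $c$ is the same across all monomial degrees, this simultaneously pins down $c=1$ and $\lambda_n=0$ for all $n$.

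Once $c=1$ is in hand, the degree statement is immediate: strong reducibility reads $\deg(T_hp-p)<\max\{1,\deg(p)\}$, and for $p\in\mathcal P\backslash\mathcal P_0$ one has $\deg(p)\geq 1$, so the right-hand side equals $\deg(p)$.

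The substance of the remaining work is the terminating exponential formula. Here I would translate $\lambda_n=0$ into the vanishing of the diagonal blocks of the generator, $\mathcal G^{(n)}=\lambda_n I = 0$; by the block-triangular representation of Proposition \ref{p:G triangle} this makes $\mathcal G$ \emph{strictly} upper triangular, so that $\mathcal G\mathcal M_n\subseteq\mathcal P_{n-1}$ and hence $\mathcal G\mathcal P_n\subseteq\mathcal P_{n-1}$. Iterating the degree drop gives $\mathcal G^k\mathcal P_n\subseteq\mathcal P_{n-k}$, and in particular $\mathcal G^{n+1}p=0$ for every $p\in\mathcal P_n$. Finally I would invoke the power-series representation $T_hp=\sum_{k=0}^\infty \frac{h^k}{k!}\mathcal G^kp$ from Proposition \ref{p:Matrix exponential} — available because strong reducibility implies local finiteness through Lemma \ref{l:D=P} — and observe that for $p\in\mathcal P_n$ all terms with $k>n$ vanish by the previous step, leaving the finite sum $\sum_{k=0}^n \frac{h^k}{k!}\mathcal G^kp$.

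I do not anticipate a genuine obstacle here; the argument is essentially bookkeeping on the graded structure. The one point requiring care is the cancellation step that produces $c=1$: it relies on choosing a nonzero monomial of \emph{positive} degree, which is possible precisely when $\mathcal P\neq\mathcal P_0$, so the trivial case $\mathcal P=\mathcal P_0$ must be handled separately — there every claim holds because $\mathcal G$ annihilates constants.
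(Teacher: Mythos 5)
Your argument is correct and follows essentially the same route as the paper: identify the single constant $c$ of the strongly reducing property with the $e^{h\lambda_n}$ from Corollary \ref{c:reducing compute} to force $c=1$ and $\lambda_n=0$, conclude that $\mathcal G$ is strictly upper triangular so $\mathcal G^{n+1}p=0$ on $\mathcal P_n$, and truncate the exponential series of Proposition \ref{p:Matrix exponential}. Your write-up is in fact slightly more careful than the paper's (the explicit $h=0$ evaluation and the degenerate case $\mathcal P=\mathcal P_0$), but it is the same proof.
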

\begin{proof}
  Corollary \ref{c:reducing compute} states that $\lambda_0=0$ and $1 = e^{\lambda_0h} = c_h = e^{\lambda_nh}$ for any $h\geq 0$. Consequently, $\lambda_n=0$ for any $n\geq 0$. Thus $\deg(\mathcal Gp)<\deg(p)$ for any $p\in\mathcal P\backslash\mathcal P_0$ and $\mathcal Gp=0$ for $p\in\mathcal P_0$. Thus, we find that $\mathcal G^{n+1}p=0$ for any $p\in\mathcal P_n$. Proposition \ref{p:Matrix exponential} yields the claim.
\end{proof}

Proposition \ref{p:local finite construction} revealed how locally finite polynomial actions can be rebuilt from their generator. A similar statement can be made for strongly reducing actions and their generators.
\begin{lemma}
    Let $\mathcal G:\mathcal P\rightarrow \mathcal P$ be any linear operator such that
   \begin{enumerate}
       \item $\mathcal G\mathcal P_{n+1}\subseteq \mathcal P_n$ for any $n\geq 0$ and
       \item $\mathcal G\mathcal P_0=\{0\}$. 
   \end{enumerate}
   Define $T_h$ by
     $$ T_hp := \sum_{n=0}^\infty \frac{h^n}{n!} \mathcal G^np$$
   for any $p\in\mathcal P$, $h\geq 0$ where the summands of the series are only finitely often non-zero. Then $(T_h)_{h\geq 0}$ is a strongly reducing polynomial action.
\end{lemma}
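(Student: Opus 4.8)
The plan is to reduce the statement to Proposition~\ref{p:local finite construction}, which already produces a locally finite polynomial action out of a suitable generator, and then to upgrade the conclusion to \emph{strongly} reducing by a direct count in the degree filtration.

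First I would check that the defining series terminates. Fix $p\in\mathcal P$ and write $n:=\deg(p)$, so $p\in\mathcal P_n$. Iterating the lemma's first hypothesis $\mathcal G\mathcal P_{k+1}\subseteq\mathcal P_k$ gives $\mathcal G^jp\in\mathcal P_{n-j}$ for $0\le j\le n$, and one further application together with the second hypothesis $\mathcal G\mathcal P_0=\{0\}$ yields $\mathcal G^{n+1}p=0$. Hence $\mathcal G$ acts nilpotently on $p$, and the series $\sum_{j\ge0}\frac{h^j}{j!}\mathcal G^jp$ collapses to the finite sum $\sum_{j=0}^{n}\frac{h^j}{j!}\mathcal G^jp$, so $T_hp$ is well defined.

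Next I would verify the two hypotheses of Proposition~\ref{p:local finite construction}. Its condition (1), namely $\mathcal G\mathcal P_n\subseteq\mathcal P_n$, follows from the lemma's stronger requirement $\mathcal G\mathcal P_n\subseteq\mathcal P_{n-1}\subseteq\mathcal P_n$ for $n\ge1$ and from $\mathcal G\mathcal P_0=\{0\}\subseteq\mathcal P_0$. For its condition (2) the obvious candidate $V_p=\mathcal P_n$ need not be finite dimensional, and this is the one spot that requires care; instead I would take $V_p:=\mathrm{Span}\{\mathcal G^jp: j=0,\dots,n\}$, which contains $p$, is $\mathcal G$-invariant precisely because $\mathcal G^{n+1}p=0$, and is finite dimensional by the nilpotency just established. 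Proposition~\ref{p:local finite construction} then immediately gives that $(T_h)_{h\ge0}$ is a locally finite polynomial action, so in particular properties (1)--(4) of Definition~\ref{def:poly-action} hold.

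It then remains to establish strong reducing (Definition~\ref{d:loc finite}), and I claim the single constant $c=1$ works for all $p$ and $h$. If $p\in\mathcal P_0$, then $\mathcal Gp=0$ forces $T_hp=p$, so $\deg(T_hp-p)=0<1=\max\{1,\deg(p)\}$. If instead $n:=\deg(p)\ge1$, the terminating expansion gives
$$ T_hp-p=\sum_{j=1}^{n}\frac{h^j}{j!}\mathcal G^jp, $$
and since $\mathcal G^jp\in\mathcal P_{n-j}\subseteq\mathcal P_{n-1}$ for every $j\ge1$, the right-hand side lies in $\mathcal P_{n-1}$; hence $\deg(T_hp-p)\le n-1<n=\max\{1,\deg(p)\}$, which is exactly the strongly reducing condition with $c=1$. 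I expect the only genuinely delicate point to be producing the finite dimensional $\mathcal G$-invariant subspace $V_p$ in the face of a possibly infinite dimensional $\mathcal P_n$; the rest is routine bookkeeping with the filtration $\mathcal P_0\subseteq\mathcal P_1\subseteq\cdots$.
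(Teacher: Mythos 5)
Your proof is correct and follows essentially the same route as the paper's: nilpotency of $\mathcal G$ on $\mathcal P_n$ (so the series terminates after $n+1$ terms) together with the degree drop $T_hp-p=\sum_{j\ge 1}\frac{h^j}{j!}\mathcal G^jp\in\mathcal P_{n-1}$ gives the strongly reducing property with $c=1$. You are in fact somewhat more careful than the paper, which leaves the verification of the polynomial-action axioms implicit, whereas you supply it cleanly via Proposition~\ref{p:local finite construction} with $V_p=\mathrm{Span}\{\mathcal G^jp: j=0,\dots,n\}$.
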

\begin{proof}
  Let $p\in\mathcal P_n$ and observe that $\mathcal G|_{\mathcal P_n}$ is nilpotent of order $n+1$. Thus, $$\mathcal G^{k}p = 0,\quad k>n $$
  which yields the statement about the summands. Moreover,
   $$ T_hp = \sum_{k=0}^n \frac{h^n}{n!} \mathcal G^np \subseteq \mathcal P_{n-1} $$
  if $n>0$ as claimed.
\end{proof}


\section{Affine Drift for Locally Finite Polynomial Processes}\label{affine:drift:loc:finite}
Recall that $\mathcal L:=\mathcal M_1$ is a vector space complement of $\mathcal P_0$ in $\mathcal P_1$. The semigroup $T$ restricted to $\mathcal P_1=\mathcal P_0\oplus\mathcal L$ has a particularly simple structure due to Lemma \ref{l:T triangle}, 
 $$ \mathrm{matrix}(T_h|_{\mathcal P_1}) = \left(\begin{matrix}I & \Pi_0T_h \\ 0 & T_h^{(1)}\end{matrix}\right),\quad h\geq0.$$
In this section we will mostly work under the following assumption:

\vskip 0.1cm
\noindent
{\bf Assumption (F):} For every $p\in\mathcal L$, there is a finite dimensional subspace $V_p\subseteq \mathcal L$ with $p\in V_p$ which is $T^{(1)}$-invariant, i.e.\ $T^{(1)}_hV_p\subseteq V_p$ for any $h\geq 0$.
\vskip 0.1cm
\noindent

Assumption (F) means that the smallest $T^{(1)}$-invariant vector space containing $p$ is finite dimensional for any $p\in\mathcal L$.

\begin{remark}\label{r:F=lf1}
  If $T$ is locally finite, then Assumption (F) holds. If Assumption (F) holds and we forget higher order polynomials (i.e.\ we assume $\mathcal P=\mathcal P_1$), then $T$ is locally finite.
  This means that Assumption (F) is equivalent to locally finiteness of $T$ for polynomials up to order $1$. This allows, under Assumption (F), to apply most of the preceding statements for polynomials up to order $1$. 
\end{remark}

\begin{lemma}\label{l:G if F}
   Suppose Assumption (F). Then $\mathcal P_1$ is the domain of $\mathcal G|_{\mathcal P_1}$ and
    $$ T_hp(x) = \sum_{n=0}^\infty \frac{h^n}{n!} (\mathcal G|_{\mathcal P_1})^np(x) $$
    for any $p\in\mathcal P_1$, $h\geq 0$. Moreover, it has a simple matrix structure relative to the decomposition $\mathcal P_1=\mathcal P_0\oplus\mathcal L$ given by
     $$ \mathrm{matrix}(\mathcal G|_{\mathcal P_1}) = \left(\begin{matrix}0 & \mathcal G^{(0,1)} \\ 0 & \mathcal G^{(1)}\end{matrix}\right)$$
     where $\mathcal G^{(0,1)}:\mathcal L\rightarrow\mathcal P_0$ and $\mathcal G^{(1)}:\mathcal L\rightarrow\mathcal L$ are linear.
\end{lemma}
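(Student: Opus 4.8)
The plan is to reduce the entire statement to the locally finite machinery already developed, but applied to $\mathcal P_1$ rather than to the full space $\mathcal P$. By Remark \ref{r:F=lf1}, Assumption (F) is exactly local finiteness of the action $T|_{\mathcal P_1}$ once we regard $(\mathcal P_1,\deg)$ as the ambient polynomial space, so Lemma \ref{l:D=P} and Proposition \ref{p:Matrix exponential} should become directly applicable. The only genuine work is to make the passage from the \emph{diagonal} invariance in Assumption (F) to honest $T$-invariance inside $\mathcal P_1$.

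First I would exhibit, for each $p\in\mathcal P_1$, a finite dimensional $T$-invariant subspace of $\mathcal P_1$ containing $p$. Write $p=\Pi_0 p+\Pi_1 p$ with $\Pi_1 p\in\mathcal L$, take the finite dimensional $T^{(1)}$-invariant subspace $V_{\Pi_1 p}\subseteq\mathcal L$ supplied by Assumption (F), and set $W_p:=\mathcal P_0\oplus V_{\Pi_1 p}$. Since $\mathcal P_0$ is one-dimensional, $W_p$ is finite dimensional and contains $p$. For invariance: $T_h$ is the identity on $\mathcal P_0$ by Remark \ref{rem:T_id_on_const}, while for $q\in V_{\Pi_1 p}\subseteq\mathcal L$ one has $T_h q=\Pi_0 T_h q+\Pi_1 T_h q=\Pi_0 T_h q+T^{(1)}_h q\in\mathcal P_0\oplus V_{\Pi_1 p}=W_p$. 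Thus $W_p$ is a finite dimensional $T$-invariant subspace of $\mathcal P_1$ with $p\in W_p$, which is precisely the input required by the argument of Lemma \ref{l:D=P}.

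With $W_p$ in hand I would invoke the $c_0$-semigroup argument from the proof of Lemma \ref{l:D=P} essentially verbatim: $T|_{W_p}$ is a weakly right-continuous semigroup on a finite dimensional space on which the point evaluations $(\delta_x)_{x\in E}$ are separating, hence it is norm-continuous and $T_h|_{W_p}=\exp(h\,\mathcal G_{W_p})$ for a bounded generator $\mathcal G_{W_p}$. This simultaneously shows $p\in\mathcal D$ with $\mathcal Gp=\mathcal G_{W_p}p\in W_p\subseteq\mathcal P_1$ — so that $\mathcal P_1$ is the domain of $\mathcal G|_{\mathcal P_1}$ and $\mathcal G|_{\mathcal P_1}$ maps $\mathcal P_1$ into itself — and, after applying the continuous evaluation $\delta_x$, yields the exponential series $T_hp(x)=\sum_{n=0}^\infty \frac{h^n}{n!}(\mathcal G|_{\mathcal P_1})^np(x)$ exactly as in Proposition \ref{p:Matrix exponential}.

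Finally the block form is read off directly relative to $\mathcal P_1=\mathcal P_0\oplus\mathcal L$. The generator annihilates constants (Remark \ref{rem:T_id_on_const}, i.e.\ $\mathcal G\mathcal P_0=\{0\}$), which forces the entire first column to vanish; for the second column I split $\mathcal G|_{\mathcal L}:\mathcal L\to\mathcal P_0\oplus\mathcal L$ by the projections, setting $\mathcal G^{(0,1)}:=\Pi_0\mathcal G|_{\mathcal L}$ and $\mathcal G^{(1)}:=\Pi_1\mathcal G|_{\mathcal L}$ (the latter agreeing with the $\mathcal G^{(1)}$ of Proposition \ref{p:G triangle}), giving the claimed matrix. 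I do not anticipate a serious obstacle here; the single point needing care is the upgrade from the diagonal invariance of Assumption (F) to full $T$-invariance on $\mathcal P_1$, which rests entirely on adjoining the one-dimensional space of constants and using that $T$ fixes them, after which the conclusions are a specialization of results already in hand.
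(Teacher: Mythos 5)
Your proof is correct and follows the paper's route: the paper disposes of this lemma by citing Remark \ref{r:F=lf1} together with Propositions \ref{p:G triangle} and \ref{p:Matrix exponential}, and your argument is precisely the unpacking of that chain — in particular, your construction of $W_p=\mathcal P_0\oplus V_{\Pi_1 p}$ and the verification of its $T$-invariance is exactly the (unstated) content behind Remark \ref{r:F=lf1}'s claim that Assumption (F) is local finiteness on $\mathcal P_1$. The remaining steps ($c_0$-semigroup argument, exponential series via $\delta_x$, block form from $\mathcal G\mathcal P_0=\{0\}$) coincide with the cited results.
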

\begin{proof}
The result is a consequence of Remark \ref{r:F=lf1}, Propositions \ref{p:G triangle} and \ref{p:Matrix exponential}.
\end{proof}

Consider the algebraic dual space of $\mathcal L$, which we denote by $B$. The algebraic dual space $B$ is understood as a linearisation of $E$, where we embed the process $X$ on $B$ via
\begin{align}
\label{eq:alg-dual-embed}
(\tilde X_t)(p) := p(X_t),\quad p\in\mathcal L,t\geq 0.
\end{align}

\begin{remark}
  We believe that it is very natural to assume that $\mathcal L$ is separating for $E$, i.e.\ for any $x,y\in E$ there is $p\in\mathcal L$ with $p(x)\neq p(y)$. If this is the case, then
    $$\delta:E\rightarrow B, x\mapsto \delta_x$$ 
    is injective where $\delta_x:\mathcal L\rightarrow \mathbb F,p\mapsto p(x)$. Injectivitiy of $\delta$ means that $\tilde X = \delta(X)$ is a copy of $X$.
    
  However, our statements in this section do not require this injectivity and are stated without the separation assumption.
\end{remark}

Under assumption (F), we show in Theorem \ref{t:affine drift algebraic} below that $\tilde X$ is a weak*-martingale driven Ornstein-Uhlenbeck process. This requires a preparatory Lemma where we, in fact, construct the constant and linear part of its drift.

\begin{lemma}\label{l:integral locfin}
   Assume Assumption (F). Let $U_h:=(T_h^{(1)})^*$ for any $h\geq 0$. For $p\in\mathcal L$, $h\geq 0$ We define $U_h^p := (T_h^{(1)}|_{V_p})^*$. Then the following diagram commutes for any $p\in\mathcal L$
     $$ \begin{matrix} B & \overset{U_h}\rightarrow & B \\ \downarrow & & \downarrow \\ V_p^* & \overset{U_h^p}\rightarrow & V_p^* \end{matrix} $$
    where the mapping from $B$ to $V_p^*$ is the restriction of the linear functional on $\mathcal L$ to $V_p$.
   
Let $f:\mathbb{R}\times \mathbb{R} \rightarrow \mathbb{R}$ be continuous. Further, let $Y$ be a $B$-valued weak*-semimartingale, i.e.\ $Y(p)$ is a semimartingale for any $p\in\mathcal L$ . Then there is a $B$-valued process $Z$ such that for any $p\in\mathcal L$ one has
    $$ \int_0^t U^p_{f(t,s)}dY_s|_{V_p} = Z_t|_{V_p},\quad t\geq 0. $$
   We will also write $\int_0^t U_{f(t,s)}dY_s := Z_t$.
   
   Also, $Z$ is uniquely determined by this property in the sense that if $\hat{Z}$ is another $B$-valued process such that for any $p\in\mathcal L$ one has
   $$ \int_0^t U^p_{f(t,s)}dY_s|_{V_p} = \hat{Z}_t|_{V_p},\quad t\geq 0, $$
   then $\hat{Z}(p) = Z(p)$ $P\otimes\lambda$-a.e.,\ where $\lambda$ denotes the Lebesgue measure on $\mathbb R_+$.
\end{lemma}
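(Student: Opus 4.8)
My plan is to dispatch the commuting diagram by a direct computation, and then to construct $Z$ by gluing together the finite-dimensional integrals attached to the subspaces $V_p$, the enabling observation being that any finite collection of these invariant subspaces sits inside a common finite-dimensional $T^{(1)}$-invariant subspace.

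For the diagram, I would fix $p\in\mathcal L$ and $h\geq 0$ and recall that, by definition of the algebraic dual operators, $(U_h\phi)(v)=\phi(T^{(1)}_hv)$ for $\phi\in B$, $v\in\mathcal L$, and $(U_h^p\psi)(v)=\psi(T^{(1)}_hv)$ for $\psi\in V_p^*$, $v\in V_p$. Since $V_p$ is $T^{(1)}$-invariant, $T^{(1)}_hv\in V_p$ whenever $v\in V_p$, so for $\phi\in B$ and $v\in V_p$ one gets $(U_h\phi)(v)=\phi(T^{(1)}_hv)=(\phi|_{V_p})(T^{(1)}_hv)=(U_h^p(\phi|_{V_p}))(v)$, which is exactly $(U_h\phi)|_{V_p}=U_h^p(\phi|_{V_p})$. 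The very same computation holds with $V_p$ replaced by any finite-dimensional $T^{(1)}$-invariant $W\subseteq\mathcal L$, writing $U^W_h:=(T^{(1)}_h|_W)^*$, which I will use below.

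To construct $Z$, I would first note that for $p,p'\in\mathcal L$ the sum $V_p+V_{p'}$ is a finite-dimensional $T^{(1)}$-invariant subspace of $\mathcal L$ containing both $p$ and $p'$, so any finite subset of $\mathcal L$ lies in a common such $W$. For a fixed finite-dimensional invariant $W$, the process $s\mapsto Y_s|_W$ is a $W^*$-valued semimartingale (in a basis of $W$ its coordinates are the semimartingales $Y(q)$), and for each fixed $t$ the map $s\mapsto U^W_{f(t,s)}$ is a continuous, deterministic, operator-valued integrand, so $I^W_t:=\int_0^t U^W_{f(t,s)}\,d(Y_s|_W)$ is a well-defined $W^*$-valued random variable. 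If $V\subseteq W$ are both invariant and $R\colon W^*\to V^*$ denotes restriction, then $R$ is linear between finite-dimensional spaces, hence continuous, and therefore commutes with stochastic integration; combined with the identity $R\,U^W_h=U^V_h\,R$ from the commuting diagram (and $R(Y_s|_W)=Y_s|_V$) this yields $R(I^W_t)=I^V_t$. I would then define $Z_t(q):=I^{V_q}_t(q)$; evaluating the integral over $W:=V_q+V_{q'}$ and restricting shows both that $Z_t(q)$ is independent of the containing invariant subspace chosen and that $q\mapsto Z_t(q)$ is linear, whence $Z_t\in B$, $t\mapsto Z_t$ is a process, and the defining identity $Z_t|_{V_p}=I^{V_p}_t$ holds.

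For uniqueness, I would fix $q\in\mathcal L$ and apply the defining relation with $p:=q$: for each fixed $t$ it pins down $Z_t(q)$ up to a $P$-null set as the $q$-coordinate of $I^{V_q}_t$, so $\hat Z_t(q)=Z_t(q)$ $P$-a.s.\ for each fixed $t$, and joint measurability together with Tonelli upgrades this to equality $P\otimes\lambda$-a.e. The main obstacle is the gluing step: one must check that the separately defined finite-dimensional integrals are mutually consistent, which rests entirely on the commuting diagram together with the fact that a continuous linear map may be pulled inside a stochastic integral. A secondary point requiring care is the measurability in $t$ of $t\mapsto I^{V_p}_t$, since both the integrand and the upper limit depend on $t$; this follows from continuity of $f$ and of $h\mapsto T^{(1)}_h$ on the finite-dimensional space $V_p$.
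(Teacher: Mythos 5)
Your proposal is correct and follows essentially the same route as the paper: the commuting diagram is verified by the same direct dual computation, and $Z$ is built by setting $Z_t(p)$ equal to the evaluation at $p$ of the finite-dimensional integral over $V_p$. The paper dismisses the remaining verifications with ``obviously, $Z$ has the required properties,'' whereas you supply the common-invariant-superspace gluing argument for consistency and linearity and an explicit uniqueness step --- details that are welcome but not a departure in method.
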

\begin{proof}
   We start with the first statement. To this end, let $y\in B$, $p\in\mathcal L$ and $q\in V_p$. We find that $T_h^{(1)}q\in V_p$ and
     $$(U_hy)|_{V_p}(q) = U_hy(q) = y(T_h^{(1)}q) = y|_{V_p}(T_h^{(1)}q) = (U_h^p(y|_{V_p}))(q). $$
    Thus, $(U_hy)|_{V_p} = U_h^p(y|_{V_p})$.
    
    Now, let $Y$ be a $B$-valued weak*-semimartingale. For $p\in \mathcal L$ we have that $Y|_{V_p}$ is a semimartingale because $V_p$ is finite-dimensional. Thus, we can define
     $$ Z_t(p) := \left(\int_0^t U_{f(t,s)}^pdY_s|_{V_p}\right)(p) $$
    for any $t\geq 0$, $p\in\mathcal L$. Obviously, $Z$ has the required properties.
\end{proof}

Recall that $\mathcal G^{(0,1)}$ in Lemma \ref{l:G if F} is linear from $\mathcal L$ to $\mathcal P_0$. Hence, $(\mathcal G^{(0,1)})^*$ is linear from $\mathcal P_0^*$ to $B$, where the former is a one-dimensional space which has a natural unit, namely $\delta_e:\mathcal P_0\rightarrow\mathbb F, p\mapsto p(e)$ where $e$ is any element of $E$ (indeed, $\mathcal P_0$ is the set of constant functions from $E$ to $\mathbb F$). We will denote this unit simply by $1$.
\begin{theorem}\label{t:affine drift algebraic}
   Suppose Assumption (F) holds. Assume that $\E[\int_0^t |\mathcal G p(X_r)|dr]<\infty$ for any $t\geq 0$, $p\in\mathcal L$ and let $\mathcal G^{(0,1)}$ and $\mathcal G^{(1)}$ be as in Lemma \ref{l:G if F}. We define $b:=(\mathcal G^{(0,1)})^*1$, $\mathcal A:=(\mathcal G^{(1)})^*$, $U_h:=(T^{(1)}_h)^*$ and 
    $$ M_t := \tilde X_t - \tilde X_0 - \int_0^t (b+\mathcal A\tilde X_s) ds,\quad t\geq 0 $$
    where the integral is a weak*-integral. Then, $M$ is a $B$-valued weak*-martingale with
    \begin{align}\label{eq:Martingale Problem}
        M_t(p) = p(X_t)-p(X_0) - \int_0^t \mathcal Gp(X_s) ds, \quad t\geq 0, p\in\mathcal L 
    \end{align}    
    and we have
    $$ \tilde X_t = U_t\tilde X_0 + \int_0^t U_{t-s}bds + \int_0^t U_{t-s}dM_s, \quad t\geq 0 $$
   where the first integral is a weak*-integral, the second is the integral discussed in Lemma \ref{l:integral locfin} and 
   $$ \tilde X_t:\Omega\times \mathcal{L}\rightarrow \mathbb{F}, \tilde{X}_t(p) := p(X_t) $$
   for any $t\geq 0$.
   
   In particular, $p(X)$ is semimartingale for any $p\in\mathcal P_1$.
\end{theorem}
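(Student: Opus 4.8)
The plan is to prove the two assertions in turn: first the pointwise martingale identity \eqref{eq:Martingale Problem} together with the claim that $M$ is a weak*-martingale, and then the variation-of-constants representation, which I would obtain by reducing everything to the finite-dimensional invariant spaces $V_p$ furnished by Assumption (F). Throughout I would use that, by Remark \ref{r:F=lf1}, Assumption (F) makes $T$ locally finite on polynomials up to order $1$, so that Lemma \ref{l:G if F} applies and gives both the domain $\mathcal P_1$ and the block-triangular form of $\mathcal G|_{\mathcal P_1}$.

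First I would unwind the definitions of $b=(\mathcal G^{(0,1)})^*1$ and $\mathcal A=(\mathcal G^{(1)})^*$. Evaluating the weak*-integral in the definition of $M$ coordinatewise at a fixed $p\in\mathcal L$, and using $1=\delta_e$, one computes $b(p)=1(\mathcal G^{(0,1)}p)$, which is the constant value of $\mathcal G^{(0,1)}p$, and $(\mathcal A\tilde X_s)(p)=\tilde X_s(\mathcal G^{(1)}p)=(\mathcal G^{(1)}p)(X_s)$. The matrix form of $\mathcal G|_{\mathcal P_1}$ in Lemma \ref{l:G if F} gives $\mathcal Gp=\mathcal G^{(0,1)}p+\mathcal G^{(1)}p$ for $p\in\mathcal L$, so $(b+\mathcal A\tilde X_s)(p)=\mathcal Gp(X_s)$ and \eqref{eq:Martingale Problem} follows; integrability of $M_t(p)$ is immediate from Definition \ref{def:poly-process}(1) and the standing hypothesis $\E[\int_0^t|\mathcal Gp(X_r)|\,dr]<\infty$. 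To verify the martingale property I would condition the increment $M_{t+h}(p)-M_t(p)=p(X_{t+h})-p(X_t)-\int_t^{t+h}\mathcal Gp(X_s)\,ds$ on $\mathcal F_t$, using the polynomial property $\E[q(X_{t+h})\mid\mathcal F_t]=T_hq(X_t)$ of Definition \ref{def:poly-process}(2) and conditional Fubini (licensed by the integrability hypothesis). After substituting $\E[\mathcal Gp(X_s)\mid\mathcal F_t]=T_{s-t}(\mathcal Gp)(X_t)$ and changing variables this yields
\[
  \E[M_{t+h}(p)-M_t(p)\mid\mathcal F_t]=T_hp(X_t)-p(X_t)-\int_0^h T_r(\mathcal Gp)(X_t)\,dr,
\]
and the right-hand side vanishes because, on the finite-dimensional $\mathcal G|_{\mathcal P_1}$-invariant space containing $p$, the exponential formula of Lemma \ref{l:G if F} gives $\frac{d}{dr}T_rp=T_r\mathcal Gp$ and hence $T_hp-p=\int_0^h T_r\mathcal Gp\,dr$. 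In particular the decomposition $p(X)=p(X_0)+\int_0^\cdot\mathcal Gp(X_s)\,ds+M(p)$ exhibits $p(X)$ as a semimartingale for every $p\in\mathcal L$, hence for every $p\in\mathcal P_1=\mathcal P_0\oplus\mathcal L$, which settles the final sentence of the statement.

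For the representation of $\tilde X$ I would fix $p\in\mathcal L$ and work inside $V_p$. Restricting to $V_p$, the process $\xi_t:=\tilde X_t|_{V_p}$ is a $V_p^*$-valued semimartingale, and testing against $q\in V_p$ (using $\mathcal G^{(1)}V_p\subseteq V_p$) rewrites \eqref{eq:Martingale Problem} as the finite-dimensional constant-coefficient linear equation $d\xi_t=(b|_{V_p}+\mathcal A^p\xi_t)\,dt+dM_t|_{V_p}$, where $\mathcal A^p=(\mathcal G^{(1)}|_{V_p})^*$ generates $U^p$ and $U^p_t=\exp(t\mathcal A^p)$. Applying integration by parts to $\exp(-t\mathcal A^p)\xi_t$ --- the deterministic smooth integrating factor contributing no quadratic-covariation term --- produces
\[
  \xi_t=U^p_t\xi_0+\int_0^t U^p_{t-s}\,b|_{V_p}\,ds+\int_0^t U^p_{t-s}\,dM_s|_{V_p}.
\]
By the definition of the $B$-valued integrals through their $V_p$-restrictions in Lemma \ref{l:integral locfin}, this is precisely the restriction to $V_p$ of the claimed identity; since $p\in\mathcal L$ is arbitrary, the $B$-valued formula follows.

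The hard part will be organisational rather than analytic: one must make the three $B$-valued objects --- the weak*-drift integral, the integral of Lemma \ref{l:integral locfin}, and the martingale $M$ --- coherent across the different slices $V_p$, which is exactly what the intertwining $(U_hy)|_{V_p}=U^p_h(y|_{V_p})$ in Lemma \ref{l:integral locfin} guarantees. Once that bookkeeping is in place, the genuine content reduces to the elementary constant-coefficient integration-by-parts step above, so Assumption (F) is doing the essential work by shrinking each computation to a finite-dimensional linear SDE.
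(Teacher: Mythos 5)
Your proposal is correct and follows essentially the same route as the paper's proof: the martingale identity is obtained from the polynomial property, conditional Fubini, and the fundamental-theorem identity $T_hp-p=\int_0^hT_r\mathcal Gp\,dr$ on the finite-dimensional invariant space, and the variation-of-constants representation is verified slice-by-slice on $V_p$ using the group extension and the intertwining of Lemma \ref{l:integral locfin} (the paper checks the candidate $Y^p$ by the product formula, you derive it via the integrating factor $\exp(-t\mathcal A^p)$ — the same computation read in the opposite direction). No gaps.
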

\begin{proof}
   Let $p\in\mathcal L$ and $0\leq s\leq t$. Since $X$ is a polynomial process we find that 
   $$\E[p(X_t)|\mathcal F_s] = T_{t-s}p(X_s).$$ 
   Recall that the domain of $\mathcal G|_{\mathcal P_1}$ is $\mathcal P_1$ due to Remark \ref{r:F=lf1}. By Lemma~\ref{l:G if F} it follows that $\partial_t T_t p(x) =\mathcal{G} T_t p(x)$ and thus by the Fundamental Theorem of Calculus 
    $$T_{t-s}p(X_s)= p(X_s) + \int_s^{t}\mathcal GT_{r-s}p(X_s)dr.$$
   By assumption we may use Fubini's theorem for the conditional expectation to find that 
    $$ \E[\int_s^{t}\mathcal Gp(X_r) dr|\mathcal F_s] = \int_s^{t}\mathcal GT_{r-s}p(X_s)dr $$
   and, hence, 
    $$ \E[p(X_t)|\mathcal F_s] = p(X_s) + \E[\int_s^{t}\mathcal Gp(X_r) dr|\mathcal F_s]. $$
   Consequently,
    $$ M^p_t := p(X_t) - \int_0^t \mathcal Gp(X_r) dr,\quad t\geq 0 $$
    is integrable at any fixed time $t\geq 0$ and its conditional expectation is given by
    $$ \E[M^p_t|\mathcal F_s] = M^p_s $$
   which yields that $M^p$ is a martingale. 
   
   By definition of $b$, we have
    \begin{align*} 
      b(p)=(\mathcal G^{(0,1)})^*1(p)= 1(\mathcal G^{(0,1)}p)
           = \mathcal G^{(0,1)}p(X_u)
   \end{align*}
   Furthermore,       
   \begin{align*}
      \mathcal A\tilde X_u(p) = \tilde X_u(\mathcal G^{(1)}p)= \mathcal G^{(1)}p(X_u)
      \end{align*}
      and therefore 
      \begin{align*}
    b(p) + \mathcal A\tilde X_u(p) = \mathcal Gp(X_u).
    \end{align*}
   Thus, we find
    \begin{align*} 
    M_t(p) &= p(X_t)-p(X_0) - \int_0^t (b+\mathcal A\tilde X_r)p ds \\
    &= p(X_t) -p(X_0) - \int_0^t \mathcal Gp(X_r) dr \\
    &= M^p_t - p(X_0)
    \end{align*}
   and consequently, $M$ is a weak*-martingale.
   
   We finalize the proof by verifying the representation of $\tilde X$. To this end, let $p\in\mathcal L$ and recall from Proposition \ref{p:Matrix exponential} that $T^{(1)}$, and henceforth $U$, can be extended to groups of operators. We find that
           \begin{align*}
          Y^p_t &:= \Big(U_t\tilde X_0 + \int_0^tU_{t-s}bds + \int_0^t U_{t-s}dM_s\Big)(p) \\&= 
            (\tilde X_0+\int_0^t U_{-s}bds + \int_0^tU_{-s}dM_s)(T_t^{(1)}p),
      \end{align*}
      where we used Lemma \ref{l:integral locfin} for the integrals with respect to $M$. The product formula for semimartingales applied on $V_p$ yields 
        $$ dY^p_t = (b+\mathcal A\tilde X_t)(p) dt + dM_t(p) = d\tilde{X}_t(p) $$
    and we have $Y^p_0 = \tilde X_0(p)$. Hence, $Y^p_t = \tilde{X}_t(p)$ as required.

\end{proof}

If $T$ is locally finite, then the right-hand side of Equation \eqref{eq:Martingale Problem} is a martingale for any polynomial $p\in\mathcal P$. 
\begin{corollary}\label{c:Martingale problem}
  Let $T$ be locally finite, $n\in\mathbb N$ and assume that $\E[\int_0^t |p(X_r)|dr]<\infty$ for any $t\geq 0$ and any $p\in\mathcal P_n$. Then the process
   $$ M_t^p := p(X_t) - \int_0^t \mathcal Gp(X_s)ds,\quad t\geq 0$$
   is an $\mathbb F$-valued martingale for any $p\in\mathcal P_n$.
\end{corollary}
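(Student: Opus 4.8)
The plan is to mimic the martingale part of the proof of Theorem \ref{t:affine drift algebraic}, with local finiteness playing the role of Assumption (F); this lets the same argument run for every $p\in\mathcal P_n$ rather than only for $p\in\mathcal L$. First I would record the structural facts that local finiteness buys us. By Lemma \ref{l:D=P}, local finiteness gives $\mathcal D=\mathcal P$, so $\mathcal Gp$ is a well-defined element of $\mathcal P$ for every $p\in\mathcal P_n$, and by Proposition \ref{p:G triangle} we have $\deg(\mathcal Gp)\le\deg(p)\le n$, hence $\mathcal Gp\in\mathcal P_n$. Consequently the standing hypothesis $\E[\int_0^t|q(X_r)|\,dr]<\infty$ applies not only to $q=p$ but also to $q=\mathcal Gp$, and together with property (1) of Definition \ref{def:poly-process} this shows $M^p_t$ is integrable for each fixed $t\ge 0$.

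Next I would establish the calculus identity $\partial_h T_hp = \mathcal G T_hp = T_h\mathcal Gp$. By Proposition \ref{p:Matrix exponential}, $T_hp(x)=\sum_{k\ge 0}\frac{h^k}{k!}\mathcal G^kp(x)$, and on the finite-dimensional $\mathcal G$-invariant (equivalently $T$-invariant) subspace spanned by $\{\mathcal G^kp:k\in\mathbb N\}$ this is a genuine matrix exponential, differentiable in $h$ with derivative $\mathcal G\exp(h\mathcal G)p=\exp(h\mathcal G)\mathcal Gp$; the commutation of $\mathcal G$ with $T_h$ is automatic on that subspace. The Fundamental Theorem of Calculus then yields $T_{t-s}p(X_s)=p(X_s)+\int_s^t \mathcal G T_{r-s}p(X_s)\,dr$ for $0\le s\le t$.

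To conclude I would combine these with the polynomial-process property applied to both $p$ and $\mathcal Gp$. Since $X$ is a $\mathcal P$-polynomial process, $\E[\mathcal Gp(X_r)\mid\mathcal F_s]=T_{r-s}(\mathcal Gp)(X_s)=\mathcal G T_{r-s}p(X_s)$, and the integrability established above justifies Fubini's theorem for the conditional expectation, giving $\E[\int_s^t \mathcal Gp(X_r)\,dr\mid\mathcal F_s]=\int_s^t \mathcal G T_{r-s}p(X_s)\,dr=T_{t-s}p(X_s)-p(X_s)=\E[p(X_t)\mid\mathcal F_s]-p(X_s)$. Writing $M^p_t=p(X_t)-\int_0^s\mathcal Gp(X_r)\,dr-\int_s^t\mathcal Gp(X_r)\,dr$, taking $\E[\,\cdot\mid\mathcal F_s]$, and using that $\int_0^s\mathcal Gp(X_r)\,dr$ is $\mathcal F_s$-measurable, the two terms involving $\E[p(X_t)\mid\mathcal F_s]$ cancel and one is left with $\E[M^p_t\mid\mathcal F_s]=p(X_s)-\int_0^s\mathcal Gp(X_r)\,dr=M^p_s$. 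The only genuinely technical step is the Fubini interchange of conditional expectation with the time integral, which is precisely what the integrability hypothesis (transferred to $\mathcal Gp$ via $\mathcal Gp\in\mathcal P_n$) is designed to license; everything else is immediate from the finite-dimensional exponential structure supplied by local finiteness.
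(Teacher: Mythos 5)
Your proposal is correct, but it takes a different route from the paper. The paper's proof is a two-line reduction: it re-grades the polynomials by declaring $\mathcal R_0:=\mathcal P_0$ and $\mathcal R_1:=\mathcal P_n$ (every non-constant element of $\mathcal P_n$ gets degree one), observes that local finiteness is insensitive to the degree function so that Assumption (F) holds for the re-graded structure, and then simply invokes Theorem \ref{t:affine drift algebraic}. You instead unpack and replay the martingale portion of that theorem's proof directly for $p\in\mathcal P_n$: establishing $\mathcal Gp\in\mathcal P_n$ via Lemma \ref{l:D=P} and Proposition \ref{p:G triangle} so that the integrability hypothesis transfers to $\mathcal Gp$, deriving $\partial_h T_hp=\mathcal GT_hp=T_h\mathcal Gp$ from the finite-dimensional exponential structure of Proposition \ref{p:Matrix exponential}, and closing with the Fundamental Theorem of Calculus, the polynomial property applied to $\mathcal Gp$, and conditional Fubini. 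The steps all check out; in particular your observation that the commutation of $\mathcal G$ with $T_h$ is automatic on the finite-dimensional invariant subspace is exactly what makes $\E[\mathcal Gp(X_r)\mid\mathcal F_s]=\mathcal GT_{r-s}p(X_s)$ work. What the paper's re-grading trick buys is brevity and a clean illustration of its recurring theme that higher-order polynomials can be reinterpreted as first-order ones; one small price is that Theorem \ref{t:affine drift algebraic} formally yields the martingale property for $p$ in a complement $\mathcal L$ of the constants, so the extension to all of $\mathcal P_n$ still needs the (trivial) decomposition $p=c+\ell$ with $\mathcal Gc=0$. Your direct argument buys self-containedness and avoids even that step, since nothing in it uses $p\in\mathcal L$ rather than $p\in\mathcal P_n$.
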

\begin{proof}
  Define $\mathcal R_0:=\mathcal P_0$ and $\mathcal R_1:=\mathcal R:=\mathcal P_n$. We now consider the polynomials $\mathcal R$ where the degree of a polynomial is $1$ if it is non-constant. Note that the definition of locally finite does not involve the degrees of the polynomials and, hence, $X$ is an $\mathcal R$-polynomial process and $T$ locally finite. Moreover Assumption (F) holds relative to the polynomials $\mathcal R$. Thus, Theorem \ref{t:affine drift algebraic} states that $M^p$ is a martingale for any $p\in\mathcal R_1=\mathcal P_n$.
\end{proof}

\begin{remark}
  Corollary \ref{c:Martingale problem} establishes that all polynomial processes with locally finite action can be associated with a corresponding martingale problem. 
    Cuchiero and Svaluto-Ferro \cite{CS-F} study polynomial processes with values in Banach spaces from the martingale problem perspective.
\end{remark}

\begin{remark}
   Considering the decomposition 
      $$ \tilde X_t = \tilde X_0 + \int_0^t(b+\mathcal A\tilde X_s) ds + M_t $$
   given in Theorem 4.5 we see that $\tilde X$ is a (weak*-)martingale driven Ornstein-Uhlenbeck process with values in $B$. Recall that a $B$-valued random variable $Z$ has a weak*-expectation $\mathbb E[Z]$ if
    $$\mathbb E[|Z(p)|]<\infty,\quad p\in\mathcal L, $$
   which is the element of $B$ given by
    $$ \mathbb E[Z]:\mathcal L\rightarrow\mathbb F, p\mapsto \mathbb E[Z(p)]. $$
   If $\tilde X_t$ has weak*-expectation for every $t\geq 0$ and this expectation can be exchanged with the integral in the decomposition above, then
    $$ \mathbb E[\tilde X_t] = \mathbb E[\tilde X_0] + \int_0^t (b+\mathcal A \mathbb E[\tilde X_s]) ds, $$
i.e.\ $t\mapsto \mathbb E[\tilde X_t]$ is a solution to the ODE 
$$
u'(t)=b+\mathcal A u(t),\quad u(0)=\mathbb E[\widetilde{X}_0].
$$
This recovers the bidual formula given in \cite[Theorem 3.8]{CS-F}. 
\end{remark}

\section{Covariance Structure for Locally Finite Polynomial Processes}\label{covar:section:loc:finite}
In this section we investigate the covariance structure of polynomial diffusions. This will rely on a multiplicativity structure of the polynomials. We make the following assumption throughout this section:
\begin{assumption}
\label{ass:cov-algebra}
Suppose that, $(T_h)_{h\geq0}$ is a locally finite polynomial action.
\end{assumption}
We also recall from the previous section the set $B$ of linear functions from $\mathcal L$ to $\mathbb F$, i.e., the algebraic dual of $\mathcal L$. 
For an $E$-valued process $X$ we use (as in the previous section) its $B$-embedded version $\tilde X_t$ given by $\tilde X_t(p) := p(X_t)$.

\begin{lemma}\label{l:expectation finite}
  Let $p\in\mathcal P$ with $|p|^2\in\mathcal P$. Then we have $\E[\int_0^t |p(X_r)|dr] < \infty$ and $\E[\int_0^t |p(X_r)|^2dr] < \infty$ for any $t\geq0$.
\end{lemma}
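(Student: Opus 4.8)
The plan is to reduce both integrability claims to a single statement: that the map $r\mapsto\E[\,|p(X_r)|^2]$ is finite and continuous on compact time intervals. Set $q:=|p|^2$. By hypothesis $q\in\mathcal P$, and since $q\geq 0$, property (1) of a polynomial process (Definition \ref{def:poly-process}) already gives $\E[q(X_r)]=\E[|p(X_r)|^2]<\infty$ for every $r\geq0$. The first task is to understand how this quantity varies with $r$.

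By the polynomial-process property (2) together with the tower property, $\E[q(X_r)]=\E[\E[q(X_r)\mid\mathcal F_0]]=\E[T_rq(X_0)]$. Here is where local finiteness (Assumption \ref{ass:cov-algebra}) enters: by Proposition \ref{p:Matrix exponential} the space $\mathcal P_q:=\mathrm{Span}\{\mathcal G^nq:n\in\mathbb N\}$ is finite dimensional, $T$-invariant, and $T_r|_{\mathcal P_q}=\exp(r\,\mathcal G|_{\mathcal P_q})$. Fixing a basis $e_1,\dots,e_m$ of $\mathcal P_q$, I would write $T_rq=\sum_{i=1}^m a_i(r)e_i$, where the $a_i(r)$ are entries of the matrix exponential $\exp(r\,\mathcal G|_{\mathcal P_q})$ applied to the coordinate vector of $q$, hence entire — in particular continuous — functions of $r$. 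Evaluating at $X_0$ and taking expectations,
$$\E[q(X_r)]=\sum_{i=1}^m a_i(r)\,\E[e_i(X_0)],$$
a finite linear combination of the continuous functions $a_i(\cdot)$ with finite coefficients $\E[e_i(X_0)]$ (finite because each $e_i\in\mathcal P$, so property (1) applies at time $0$). Thus $r\mapsto\E[|p(X_r)|^2]$ is continuous on $\mathbb R_+$.

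With continuity in hand the rest is routine. A continuous function is bounded on $[0,t]$, so $\int_0^t\E[|p(X_r)|^2]\,dr<\infty$; since $(r,\omega)\mapsto|p(X_r)|^2$ is nonnegative and jointly measurable (the path $r\mapsto p(X_r)$ being c\`adl\`ag and adapted, hence progressively measurable), Tonelli's theorem yields $\E\!\left[\int_0^t|p(X_r)|^2\,dr\right]=\int_0^t\E[|p(X_r)|^2]\,dr<\infty$. For the first moment I would use the elementary bound $|a|\leq 1+|a|^2$, giving $\E\!\left[\int_0^t|p(X_r)|\,dr\right]\leq t+\E\!\left[\int_0^t|p(X_r)|^2\,dr\right]<\infty$.

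The only genuine subtlety worth flagging is the passage from the pointwise-finite moments to continuity of $r\mapsto\E[q(X_r)]$; this rests entirely on local finiteness, which collapses the orbit $\{T_rq\}$ into a finite-dimensional space on which $T_r$ is an honest matrix exponential and therefore depends analytically on $r$. Without local finiteness one would have no control of the time-regularity of $r\mapsto\E[q(X_r)]$, and the step producing integrability over $[0,t]$ could fail.
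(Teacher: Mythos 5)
Your proof is correct and follows essentially the same route as the paper: both reduce to $q=|p|^2$, invoke local finiteness and Proposition \ref{p:Matrix exponential} to place $q$ in a finite-dimensional $T$-invariant space on which $r\mapsto T_rq$ is continuous, exploit the automatic continuity of the linear functional $f\mapsto\E[f(X_0)]$ on that space, and conclude via Tonelli. The only cosmetic differences are that you derive the first-moment bound from $|a|\le 1+|a|^2$ where the paper uses Cauchy--Schwarz, and you work with an explicit basis rather than an abstract norm.
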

\begin{proof}
   Let $t\geq 0$ and define $q:=|p|^2\in\mathcal P$. Note that $q(x)\geq 0$ for any $x\in E$. Since $T$ is locally finite, there is a finite dimensional vector space $V_q\subseteq \mathcal P$ with $q\in V_q$ which is $T$-invariant. Let $\|\cdot\|_q$ be a norm on $V_q$. Proposition \ref{p:Matrix exponential} yields that $r\mapsto T_rq$ is $\|\cdot\|_q$-continuous. Also, the function
    $$ \Gamma:V_q\rightarrow \mathbb F, f\mapsto \E[f(X_0)] $$
  is linear and defined on a finite dimensional space and, hence, continuous with respect to the operator norm. We find
    \begin{align*}
    \left(\E[\int_0^t |p(X_r)|dr]\right)^2 &\leq t \int_0^t \E[q(X_r)] dr\\
    &= t\int_0^t \E[T_rq(X_0)]  dr \\
    &= t \Gamma\left(\int_0^t T_rq dr \right) <\infty 
    \end{align*}
where we used Cauchy-Schwarz' inequality and Tonelli's theorem for the first inequality, the tower property with conditioning on $\mathcal F_0$ for the first equality and linearity and continuity of $\Gamma$ for the last equality and inequality.
\end{proof}

Next we state the main result of the section. It identifies the covariance coefficient of a locally finite polynomial process, which exists, and it is a second order polynomial as a function of the state. For our result we assume that the product of two first order polynomials is at most a second order polynomial and we suppose two different sufficient conditions for second order polynomials. Property (2) in the following theorem holds for classical polynomials in $d$ commuting variables and for the polynomials appearing in \cite{CS-F}.
\begin{theorem}\label{t:quadratic covariance}
 Assume that $\mathbb F=\mathbb R$, $\mathcal P_1\cdot\mathcal P_1\subseteq \mathcal P_2$ and at least one of
  \begin{enumerate}
      \item $\mathcal P_2\cdot\mathcal P_2\subseteq \mathcal P$
      \item Every element in $\mathcal P_2$ can be written as a finite linear-combination of positive elements in $\mathcal P_2$. 
  \end{enumerate}
 Let $X$ be a polynomial process with polynomial action $T$. Let $M$ be the process introduced in Theorem \ref{t:affine drift algebraic} (whose requirements are met) and $p,q\in\mathcal L$. We define
 $$a_{p,q}(x):= \mathcal G(pq)(x)-p(x)\mathcal Gq(x)-q(x)\mathcal Gp(x),\quad x\in E.$$ 
Then we have
  $$ \langle M_{(\cdot)}(p),M_{(\cdot)}(q)\rangle_t = \int_0^t a_{p,q}(X_s) ds$$
 for any $t\geq0$ where $\langle M_{(\cdot)}(p),M_{(\cdot)}(q)\rangle$ denotes the compensator of the quadratic covariation between $M_{(\cdot)}(p)$ and $M_{(\cdot)}(q)$.
\end{theorem}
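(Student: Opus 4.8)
The plan is to identify the compensator of the quadratic covariation by exhibiting a candidate, namely $\int_0^t a_{p,q}(X_s)\,ds$, and verifying that the difference $M_{(\cdot)}(p)M_{(\cdot)}(q) - \int_0^\cdot a_{p,q}(X_s)\,ds$ is a martingale. The natural route is to apply Corollary \ref{c:Martingale problem} to the second-order polynomial $pq$. Indeed, since $\mathcal P_1\cdot\mathcal P_1\subseteq\mathcal P_2$ we have $pq\in\mathcal P_2$, so first I would check that the integrability hypothesis of Corollary \ref{c:Martingale problem} holds for all polynomials in $\mathcal P_2$. This is exactly where Lemma \ref{l:expectation finite} enters: under either alternative (1) or (2) one can express any element of $\mathcal P_2$ (in particular $pq$, $p$, $q$) as a combination of polynomials $f$ with $|f|^2\in\mathcal P$, so that $\E[\int_0^t|f(X_r)|\,dr]<\infty$. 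Under (1), for $f\in\mathcal P_2$ we have $|f|^2=f^2\in\mathcal P_2\cdot\mathcal P_2\subseteq\mathcal P$; under (2) we decompose $f$ into positive elements of $\mathcal P_2$ and handle each square term. This gives the integrability needed to apply Corollary \ref{c:Martingale problem} to $pq$ and also guarantees that $M(p)$ and $M(q)$ are square-integrable, so their quadratic covariation and its compensator are well defined.

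Once integrability is in place, the key computation is an integration-by-parts (product rule for semimartingales). By Theorem \ref{t:affine drift algebraic}, for $p\in\mathcal L$ the process $p(X_t)=M^p_t+\int_0^t\mathcal Gp(X_s)\,ds$ is a semimartingale with martingale part $M^p$ (recall $M_t(p)=M^p_t-p(X_0)$, so $M_{(\cdot)}(p)$ and $M^p$ have the same quadratic-covariation structure). Applying the product rule to $p(X_t)q(X_t)$ yields
\begin{align*}
 p(X_t)q(X_t) &= p(X_0)q(X_0) + \int_0^t p(X_{s-})\,dq(X_s) + \int_0^t q(X_{s-})\,dp(X_s) \\
 &\quad + [\,p(X),q(X)\,]_t,
\end{align*}
where the finite-variation parts of $dp(X_s)$ and $dq(X_s)$ are $\mathcal Gp(X_s)\,ds$ and $\mathcal Gq(X_s)\,ds$ respectively, and the bracket $[p(X),q(X)]$ equals $[M^p,M^q]$ since finite-variation terms do not contribute to the quadratic covariation of continuous-plus-jump semimartingales. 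On the other hand, Corollary \ref{c:Martingale problem} applied to $pq\in\mathcal P_2$ tells us that $p(X_t)q(X_t)-\int_0^t\mathcal G(pq)(X_s)\,ds$ is a martingale. Equating the finite-variation (drift) parts of these two decompositions identifies the compensator of $[M^p,M^q]$: the predictable finite-variation part of $p(X)q(X)$ is $\int_0^t\mathcal G(pq)(X_s)\,ds$ from one side and $\int_0^t\big(p(X_s)\mathcal Gq(X_s)+q(X_s)\mathcal Gp(X_s)\big)\,ds$ plus the compensator of the bracket from the other. Subtracting gives $\langle M_{(\cdot)}(p),M_{(\cdot)}(q)\rangle_t=\int_0^t a_{p,q}(X_s)\,ds$ by the very definition of $a_{p,q}$.

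The main obstacle I anticipate is handling the technical bookkeeping of the quadratic covariation versus its compensator: the product rule produces the (uncompensated) bracket $[M^p,M^q]$, whereas the claim is about its predictable compensator $\langle M^p,M^q\rangle$. To bridge this one argues that $p(X)q(X)-\int_0^t\mathcal G(pq)(X_s)\,ds$ is a genuine martingale (from Corollary \ref{c:Martingale problem}), which forces the predictable finite-variation part of the semimartingale $p(X)q(X)$ to coincide with $\int_0^t\mathcal G(pq)(X_s)\,ds$; taking predictable compensators of both decompositions and matching the predictable finite-variation parts (using uniqueness of the canonical decomposition of a special semimartingale) then replaces $[M^p,M^q]$ by $\langle M^p,M^q\rangle$ automatically. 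A secondary subtlety is verifying that $M^p$ and $M^q$ are indeed square-integrable martingales so that $\langle M^p,M^q\rangle$ exists; this again follows from Lemma \ref{l:expectation finite} applied to $p$ and $q$ (whose squares lie in $\mathcal P$ under the stated hypotheses), which ensures $\E[\int_0^t|p(X_s)|^2\,ds]<\infty$ and hence local square-integrability of the martingale parts.
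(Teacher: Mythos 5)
Your proposal is correct and follows essentially the same approach as the paper: the same integrability step via Lemma \ref{l:expectation finite} (with the separate Tonelli argument for positive elements under alternative (2)), Corollary \ref{c:Martingale problem} applied to a second-order polynomial, and the semimartingale product formula with the observation that the continuous finite-variation drift does not contribute to the bracket. The only organizational difference is that the paper first computes $\langle M_{(\cdot)}(p),M_{(\cdot)}(p)\rangle$ by expanding $(M^p)^2$ and exhibiting the martingale remainder explicitly, then obtains the mixed bracket by polarization, whereas you treat $pq$ directly and identify the compensator via uniqueness of the canonical decomposition of a special semimartingale --- which is precisely the route the paper takes in the parallel Theorem \ref{t:L2-structure}.
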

\begin{proof}
   Let $r\in \mathcal P_2$. 
   
   If (1) holds, then Lemma \ref{l:expectation finite} yields that $\E[\int_0^t |r(X_s)|ds] < \infty$.
   
   If (2) holds, then there are positive $q_1,\dots,q_n\in \mathcal P_2$ and constants $c_1,\dots,c_n\in\mathbb R$ such that $r=\sum_{j=1}^nc_jq_j$. We find that
    $$ \E[\int_0^t |r(X_s)|ds] \leq \sum_{j=1}^n|c_j| \E[\int_0^t q_j(X_s) ds] = \sum_{j=1}^n|c_j| \int_0^t \E[ T_sq_j(X_0) ] ds < \infty $$
   where we used Tonelli's theorem for the equality and the last inequality follows as in the proof of Lemma \ref{l:expectation finite}.

   
   
   
   Thus, we have $\E[\int_0^t |r(X_s)|ds] < \infty$ for any $r\in\mathcal P_2$. Corollary \ref{c:Martingale problem} yields that
     \begin{align*}
         M^{p^2}_t &:= p^2(X_t) - p^2(X_0) - \int_0^t \mathcal G(p^2)(X_s)ds,\quad t\geq0, \\
         M^{p}_t &:= p(X_t) - p(X_0) - \int_0^t \mathcal Gp(X_s)ds,\quad t\geq0
     \end{align*}    
     are martingales. We have
      \begin{align*}
           (M^p_t)^2 =& p^2(X_t) + p^2(X_0) + \left(\int_0^t \mathcal Gp(X_u)du\right)^2 \\
            &\quad - 2p(X_t)p(X_0) - 2p(X_t)\int_0^t \mathcal Gp(X_u)du +2 p(X_0)\int_0^t \mathcal Gp(X_u) du \\
            =& M_t^{p^2} + \int_0^t \mathcal G(p^2)(X_s) ds + 2p^2(X_0) - 2p(X_t)p(X_0) \\
            &\quad  - 2\int_0^t p(X_u)\mathcal Gp(X_u)du  - 2\int_0^t \int_0^u\mathcal Gp(X_v)dv dM_u^p \\
            &\quad+2 p(X_0)\int_0^t \mathcal Gp(X_u) du \\
            =& M_t^{p^2} + \int_0^t a_{p,p}(X_s) ds + 2p^2(X_0)-2M_t^pp(X_0) - 2\int_0^t \int_0^u\mathcal Gp(X_v)dv dM_u^p \\
            =& N_t + \int_0^t a_{p,p}(X_s) ds
      \end{align*}
      for any $t\geq 0$ where we used the product formula for the second equality, the semimartingale decomposition from above and the fact that $[p(X),\int_0^{\cdot}\mathcal Gp(X_u)du]_t=0$ due to \cite[Proposition I.4.49(d)]{JS} where
       $$ N_t := M_t^{p^2} + 2p^2(X_0)-2M_t^pp(X_0) - 2\int_0^t \int_0^u\mathcal Gp(X_v)dv dM_u^p, \quad t\geq 0 $$
      is a martingale. Consequently,
       $$ \<M_{(\cdot)}(p),M_{(\cdot)}(p)\>_t = \int_0^t a_{p,p}(X_s) ds. $$
      Now let $q\in\mathcal P_1$. We have
       \begin{align*} 
       &\<M_{(\cdot)}(p),M_{(\cdot)}(q)\>_t \\&\quad= \frac12\left(\<M_{(\cdot)}(p+q),M_{(\cdot)}(p+q)\>_t-\<M_{(\cdot)}(p),M_{(\cdot)}(p)\>_t-\<M_{(\cdot)}(q),M_{(\cdot)}(q)\>_t\right) \\&\quad= \int_0^t a_{p,q}(X_s) ds 
       \end{align*}
     for any $t\geq 0$ as claimed.
\end{proof}

We can further sharpen our results for $E$-valued diffusions.
\begin{definition}\label{d:diffusion}
  We say that an $E$-valued process $X$ is a diffusion with drift coefficient $\beta:\Omega\times\mathbb R_+\rightarrow B$ and diffusion coefficient $\sigma:\Omega\times\mathbb R_+\rightarrow B$ if for any $p\in \mathcal L$, $\beta(p):\Omega\times\mathbb R_+\rightarrow\mathbb R$ is progressively measurable and pathwise locally integrable, $\sigma(p):\Omega\times\mathbb R_+\rightarrow\mathbb R$ is progressively measurable and pathwise locally square-integrable, and
    $$ dp(X_t) = \beta_t(p) dt + \sigma_t(p)dW_t^p $$
  for any $p\in\mathcal P$, where $W^p$ is a standard Brownian motion with values in $\mathbb R$. 
\end{definition}

Under the additional assumptions that a Brownian motion exists on the filtered probability space and that sample paths are continuous, we can show that $X$ is an $E$-valued diffusion in the sense of Definition \ref{d:diffusion}.
\begin{corollary}\label{c:lf Diffusion}
   Assume that the requirements of Theorem \ref{t:quadratic covariance} are met, $p(X)$ has continuous sample paths for any $p\in\mathcal P$ and that there is some $\mathcal{F}$-standard Brownian motion. Then there is an affine function $\beta:B\rightarrow B$ such that $X$ is a diffusion with drift coefficient $\beta(\tilde X_t)$ and some diffusion coefficient $\sigma$, and
    $$ [p(X),q(X)]_t = \int_0^t a_{p,q}(X_s)ds,\quad t\geq 0 $$
    where $[p(X),q(X)]_t$ denotes the quadratic covariation and
    $$a:\mathcal L\times\mathcal L\rightarrow\mathcal P,(p,q)\mapsto a_{p,q}:=\mathcal G(pq)-p\mathcal Gq-q\mathcal Gp.$$
\end{corollary}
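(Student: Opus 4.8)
The plan is to read off both the drift and the covariation directly from the two results already established for the embedded process $\tilde X$, and then to convert the martingale part into a Brownian stochastic integral. Throughout I fix $p,q\in\mathcal L$ and recall from Theorem \ref{t:affine drift algebraic} that $M_t(p)=p(X_t)-p(X_0)-\int_0^t\mathcal Gp(X_s)ds$ is a martingale with drift $\mathcal Gp(X_s)=(b+\mathcal A\tilde X_s)(p)$; the hypotheses of that theorem are available because the requirements of Theorem \ref{t:quadratic covariance} are assumed.

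I would first treat the drift and the covariation. Setting $\beta(y):=b+\mathcal Ay$ defines an affine map $B\rightarrow B$, and by construction the drift of $p(X)$ equals $\beta(\tilde X_s)(p)$; the progressive measurability and pathwise local integrability of $\beta(p)$ demanded in Definition \ref{d:diffusion} follow from continuity of $p(X)$ together with the integrability already granted by Theorem \ref{t:quadratic covariance}. Since $p(X)$ has continuous paths and the finite-variation part $\int_0^\cdot\mathcal Gp(X_s)ds$ is continuous, $M_{(\cdot)}(p)$ is a continuous local martingale. For continuous local martingales the quadratic covariation is predictable and therefore coincides with its compensator, so Theorem \ref{t:quadratic covariance} upgrades to $[M_{(\cdot)}(p),M_{(\cdot)}(q)]_t=\langle M_{(\cdot)}(p),M_{(\cdot)}(q)\rangle_t=\int_0^t a_{p,q}(X_s)ds$. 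As the finite-variation part contributes nothing to quadratic covariation, $[p(X),q(X)]_t=[M_{(\cdot)}(p),M_{(\cdot)}(q)]_t=\int_0^t a_{p,q}(X_s)ds$, which is the asserted identity.

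It remains to produce the representation $dp(X_t)=\beta(\tilde X_t)(p)dt+\sigma_t(p)dW^p_t$. I would set $\sigma_t(p):=\sqrt{a_{p,p}(X_t)}$, which is well defined for $P\otimes\lambda$-almost every $(\omega,t)$ because the nondecreasing process $[M_{(\cdot)}(p)]_t=\int_0^t a_{p,p}(X_s)ds$ forces its integrand to be nonnegative, and which is progressively measurable and pathwise locally square-integrable since its square integrates the finite quadratic variation. Then $M_{(\cdot)}(p)$ is a continuous local martingale with $[M_{(\cdot)}(p)]_t=\int_0^t\sigma_s(p)^2ds$, and the standard representation theorem yields an $\mathcal F$-Brownian motion $W^p$ with $M_t(p)=\int_0^t\sigma_s(p)dW^p_s$, namely $W^p_t:=\int_0^t\sigma_s(p)^{-1}1_{\{\sigma_s(p)\neq0\}}dM_s(p)+\int_0^t 1_{\{\sigma_s(p)=0\}}dB_s$, where $B$ is the assumed ambient $\mathcal F$-Brownian motion; Lévy's characterisation confirms $W^p$ is Brownian.

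The step I expect to be the main obstacle is exactly this last representation: one must handle the degeneracy set $\{\sigma_s(p)=0\}$, which is the sole reason the existence of an ambient $\mathcal F$-Brownian motion is assumed, and then verify via Lévy's characterisation that the constructed $W^p$ is an adapted standard Brownian motion with the correct bracket. The remaining verifications, that $\beta$ is affine and that the measurability and integrability clauses of Definition \ref{d:diffusion} hold, are routine consequences of continuity of paths and the integrability inherited from Theorem \ref{t:quadratic covariance}.
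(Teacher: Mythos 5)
Your proposal is correct and follows essentially the same route as the paper: define $\beta(x)=b+\mathcal Ax$ from Theorem \ref{t:affine drift algebraic}, pass from the predictable compensator in Theorem \ref{t:quadratic covariance} to the quadratic covariation via path continuity, and then represent the continuous local martingale part as a stochastic integral against a Brownian motion. The only difference is that the paper simply cites Jacod \cite[Corollaire 14.47(b)]{Jacod79} for this last representation step, whereas you reprove it by hand (taking $\sigma_t(p)=\sqrt{a_{p,p}(X_t)}$, handling the degeneracy set with the ambient Brownian motion, and invoking L\'evy's characterisation), which is exactly the content of the cited result.
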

\begin{proof}
 Recall from Theorem \ref{t:affine drift algebraic} $b\in B$, the linear operator $\mathcal A:B\rightarrow B$ as well as the weak*-martingale $M$. We define the affine function $\beta:B\rightarrow B,\beta(x):= b + \mathcal Ax$. By Theorem \ref{t:affine drift algebraic} we have that
    $$ dp(X_t) = \beta(\tilde X_t) dt + dM_t(p) $$
   for any $p\in\mathcal P$. Recall
   from Theorem \ref{t:quadratic covariance} 
     $$ a:\mathcal L\times\mathcal L\rightarrow \mathcal P, (p,q) \mapsto a_{p,q}:=\mathcal G(pq)-p\mathcal Gq-q\mathcal Gp. $$
    The operator $a$ is bilinear. From Theorem \ref{t:quadratic covariance} we have
     $$ [p(X),q(X)]_t = [M_{(\cdot)}(p),M_{(\cdot)}(q)]_t = \int_0^t a_{p,q}(X_s) ds,\quad t\geq0 $$
    as required. Jacod \cite[Corollaire 14.47(b)]{Jacod79} yields that $p(X)$ is a diffusion.
\end{proof}

\begin{remark}
   Say $dX_t = (\mu + \gamma X_t) dt + \sigma(X_t) dW_t$ for some Brownian motion $W$, $\mu,\gamma\in\mathbb R$ and a measurable function $\sigma$ on $\mathbb R$ of at most linear growth. One does expect that $X$ has generator
    $$ \mathcal Gf(x) = (\mu+\gamma x)f'(x) + \frac{\sigma^2(x)}{2}f''(x) $$
   For linear functions $p(x)=\alpha x$, $q(x)= \beta x$ one has
    $$ a_{p,q}(x) = (\mathcal G(pq)-p\mathcal Gq-q\mathcal Gp)(x) = \sigma^2(x) \alpha\beta$$
    which shows that $a_{p,q}$ is the quadratic covariation coefficient of $p(X)$ and $q(X)$ which appears in Corollary \ref{c:lf Diffusion}.
\end{remark}

\begin{example}
   Let $u:(x_0,x_1)\rightarrow \mathbb R$ be the unique solution of the second order ODE
    $$ u''(x) = x^2/u(x),\quad x\in (x_0,x_1) $$
   with $u(0)=1$ and $u'(0)=0$ where $x_1\in (0,\infty]$ is chosen maximally such that a solution exists, and $x_0\in [-\infty,0)$ is chosen minimally such that a solution exists. Observe that $u''$ is positive because $u$ is strictly positive on $(x_0,x_1)$. Thus $u'$ is increasing and, hence, positive on $[0,x_1)$ and negative on $(x_0,0]$. Consequently, $u$ is increasing on the positive half-line and decreasing on the negative half-line while attaining its minimal value in $0$ with $u(0)=1$. Thus we find that
    $$ u''(x) \leq x^2, \quad x\in [0,x_1). $$
   Consequently, we have $|u'(x)| \leq \frac{|x|^3}{3}$ and $u(x)\leq 1+\frac{x^4}{12}$ for any $x\in(x_0,x_1)$. This implies that $x_1=\infty$ and $x_0=-\infty$.

   Now define $\sigma(x):=\sqrt{u(x)}$ for $x\in\mathbb R$. Let $X$ be a solution to the SDE
    $$ dX_t = \sigma(X_t) dW_t,\quad X_0 = 0 $$
For polynomials $\mathcal P$ we use the span of $\{1,x,x^2,u(x)\}$, and $\mathcal L$ denotes the span of $\{x\}$. Since $\mathcal P$ is finite dimensional Proposition \ref{p:local finite construction} yields that it suffices to state the generator of the action $T$, which we define by
 $$ \mathcal Gp(x) := \frac12 u(x)p''(x),\quad p\in\mathcal P, x\in E := \mathbb R.$$
Note that $\mathcal G\mathcal P_1 =\{0\}$, $\mathcal G u(x) = \frac{x^2}{2}$ and $\mathcal G x^2 = u(x)$ for any $x\in E$ and hence the corresponding action is given by $T_hp(x) := \sum_{j=0}^\infty \frac{1}{j!}\mathcal G^jp(x)$ for any $p\in\mathcal P$, $x\in E$. Moreover, 
 $$ \E[ p(X_t) | \mathcal F_s] = T_{t-s}p(X_s) $$
 and, therefore, $X$ is a $\mathcal P$-polynomial process with action $T$. 
 However, $\mathcal P_1\cdot\mathcal P_1$ is a proper subset of $\mathcal P$.
\end{example}

\section{Affine Drift for Strongly Continuous Polynomial Processes}\label{sec: affine drift}


In this section we analyze polynomial processes under the assumptions that the polynomial action is a strongly continuous semigroup. We provide conditions which similar as in Section~\ref{affine:drift:loc:finite} under algebraic conditions allow to understand the polynomial process $X$ as a process with affine drift. Since $X$ does not take values in a linear space, again we will need to linearise $E$ first and then identify an additive decomposition of $X$ into a process which is 'mean-zero' like and an 'affine drift term'. More precisely, we aim at a decomposition of the following type
 $$ X_t = U_tX_0 + \int_0^t U_sb\ ds + R_t $$
 where the remainder term $R$ is weakly mean-zero, $(U_t)_{t\geq 0}$ is a semigroup of linear operators and $b$ is some constant. In some sense, this means that the drift of $X$ is given as $\int_0^t (b + AX_s)ds$ where $A$ may be thought of as a derivative of $U$ with respect to $t$ at time zero. Several problems will occur, though. First, $b$ can be outside the linearisation of $E$, and possibly further elements have to be added. Second, $U$ does not need to be strongly continuous but only weakly continuous and the meaning of generator is blurred. The second problem is avoided by leaving $U$ as it is and not passing to the generator in case it does not exist. 
 None of these problems occur in the classical case where $\mathcal P_1$ is finite-dimensional as in the first work on polynomial processes by Cuchiero \cite{cuchiero.dissertation}.
 Our main result of this section is summarised in Theorem \ref{t:affine drift}.

As previously done in Section~\ref{affine:drift:loc:finite}, also in this section we will not make use of the entire structure of $\mathcal P$, as only $\mathcal P_0$ and $\mathcal P_1$ will matter in our analysis. 
Doing so we might loose the algebraic property of the polynomials which, however, is not needed in this section anyway.

In this section we make the following assumption throughout: 
\begin{assumption}
\label{ass:affine}
There is a norm $\|\cdot\|$ on $\mathcal P_1$ such that:
  \begin{enumerate}
    \item $T_h|_{\mathcal P_1}$ has an extension $(\overline T_h)_{h\geq 0}$ to the completion of $(\mathcal P_1,\|\cdot\|)$ which is a strongly continuous semigroup of bounded operators.
    \item The maps $\delta_x:\mathcal L\rightarrow \mathbb F, f\mapsto f(x)$ are $\|\cdot\|$-continuous for any $x\in E$.
    \item $\mathcal P_1$ is separating for $E$, i.e.\ for any $x,y\in E$ there is $p\in\mathcal P_1$ with $p(x)\neq p(y)$.
  \end{enumerate}
\end{assumption}

We recall that $\mathcal L$ denotes a closed vector space complement of $\mathcal P_0$ in $\mathcal P_1$ (also denoted $\mathcal M_1$, the first-order monomials in our setting). We also note in passing that if $(E,\Vert\cdot\Vert_E)$ is a Banach space and $\mathcal P_1$ a subset of linear functionals, then the norm on $\mathcal P_1$ is naturally chosen to be the operator norm on linear functionals, i.e., the elements in the dual of $E$. 


 The dual space of $\mathcal L$ is denoted by $(B,\|\cdot\|_B)$, i.e.\ $B$ is the set of $\|\cdot\|$-continuous linear maps on $(\mathcal L,\|\cdot\|)$. The space $B$ will play the role as the linearisation of $E$. One sees that the dual space $\mathcal P_1^*$ of $\mathcal P_1$ is isomorphic to $\mathbb F\times B$ with norm $\|(c,b)\|_{\mathbb F\times B} := |c| + \|b\|_B$. The extra $\mathbb F$-dimension, which is generated by the constant functions, does not play a substantial role in our analysis to come.

The set $E$ has a natural embedding into $B$ because $\delta_x$ as a functional on $\mathcal L$ is linear and continuous by property (2) of Assumption \ref{ass:affine}, that is, $\delta_x\in B$. Moreover, $\mathcal P_1$ is separating for $E$ and, hence, so is $\mathcal L$. I.e.,  $\delta_x\neq\delta_y$ for any $x,y\in E$ with $x\neq y$, which implies that
$$ \delta:E\rightarrow B, x\mapsto \delta_x $$
is an injective map. We denote the embedding of $X$ into $B$ by 
\begin{align}\label{a:embedding to B}
\tilde X_t:=\delta_{X_t},\qquad t\geq 0.
\end{align}
This is how $B$ will be interpreted as a linearisation of $E$.
We emphasise in passing that in the previous section, $B$ denoted the {\it algebraic} dual, while now it is the space of {\it continuous} linear functionals on $\mathcal L$. 
\begin{remark} 
  If $(E,\|\cdot\|_E)$ is a reflexive Banach space and $(\mathcal L,\|\cdot\|)$ its dual, then $E$ is isometrically isomorphic to $B$ where the embedding is $\delta:E\rightarrow B,x\mapsto \delta_x$. 
  Also, note that we assumed $(\overline T_h)_{h\geq 0}$ to be strongly continuous
  (by Property (1) in Assumption \ref{ass:affine}). However, its dual semigroup on $\mathbb F\times B$ does not need to be strongly continuous. Consider the following example: The left-shift semigroup $U$ on $L^1(\mathbb R)$ is generated by the weak derivative, i.e.\ $\mathcal Gf = f'$, where $f\in L^1(\mathbb R)$ is absolutely continuous with absolutely continuous derivative $f'\in L^1(\mathbb R)$. The dual of $L^1(\mathbb R)$ is isometric to $L^\infty(\mathbb R)$ with $\<g,f\>=\int_{\mathbb R} f(x)g(x) dx$ for $f\in L^1(\mathbb R)$ and $g\in L^\infty(\mathbb R)$. The dual of the left-shift $U$ is the right-shift $R$ on $L^\infty(\mathbb R)$, which is not strongly continuous because $R_t 1_{[0,1]} \not\rightarrow 1_{[0,1]}$ in norm (the norm being the uniform norm) as $t\searrow 0$. $\mathcal G^* = -\partial_x$ on its domain. (This is an example with a strongly continuous group!)
\end{remark}

\begin{remark}
  For any $p\in\mathcal P_1$, property (3) in Definition \ref{def:poly-process} yields that the $\mathbb F$-valued process $p(\tilde X)$ has c\`adl\`ag paths. Notice that for $p\in\mathcal L$, we mean by $p(\tilde{X}_t)=\tilde{X}_t(p)=p(X_t)$. Thus, the notation $p(\tilde{X})$ makes use of the identification of $E$ with $B$. For general $p\in\mathcal P_1$, we use that the dual space of $\mathcal P_1$ is isomorphic to $\mathbb F\times B$, as noted above.  
\end{remark}

We start by inspecting the structure of the semigroup $\overline T$. We use the notations 
\begin{align}\label{eq:T-bar-projection}
 \overline T_h^{(i)} := \Pi_i\overline T_h|_{\overline{\mathcal L}}, \qquad i=0,1,   
\end{align}
where $\Pi_0,\Pi_1$ are projectors on $\overline{\mathcal P}_1$ with $\Pi_0+\Pi_1$ equal to the identity and with ranges $\mathcal P_0$ and $\overline{\mathcal L}$, respectively. Lemma \ref{l:T triangle} implies on $\overline{\mathcal P}_1 = \mathcal P_0\oplus \overline{\mathcal L}$ that 
\begin{equation}
\label{eq:bar-T-matrix}
\mathrm{matrix}(\overline T_h) = \left(\begin{matrix} I & \overline T_h^{(0)}\\ 0 & \overline T^{(1)}_h \end{matrix}\right).
\end{equation}
This implies that the generator $\overline{\mathcal G}$ of $\overline T$ has the structure
 $$ \mathrm{matrix}(\overline{\mathcal G}) = \left(\begin{matrix} 0 & \overline{\mathcal G}^{(0)}\\ 0 & \overline{\mathcal G}^{(1)} \end{matrix}\right) $$
where $\overline{\mathcal G}^{(1)}$ is the generator of $\overline T^{(1)}$ and $$
\overline{\mathcal G}^{(0)} :=\{(f,c)\in\overline{\mathcal L}\times\mathcal P_0: c = \lim_{h\searrow0}\frac1h\overline T_h^{(0)}f\}.$$ 
Equation \eqref{eq:T-bar-projection} yields that the dual operator $U_h$ of $\overline T_h$ has the presentation
 $$ \mathrm{matrix}(U_h) = \left(\begin{matrix} I & 0 \\ U_h^{(0)} & U^{(1)}_h \end{matrix}\right) $$
 on $\mathbb F\times B$ for any $h\geq 0$ where $U_h^{(1)} = (\overline T_h^{(1)})^*$ and $U_h^{(0)} = (\overline T_h^{(0)})^*$.

\begin{lemma}\label{l:Generator V}
  Let $\overline{\mathcal D}$ be the domain of $\overline{\mathcal G}$ and $\overline{\mathcal D}^{(1)}$ be the domain of $\overline{\mathcal G}^{(1)}$. Then we have 
   $$ \overline{\mathcal D} = \mathcal P_0\oplus \left(\overline{\mathcal D}\cap\overline{\mathcal L}\right), \quad\quad \overline{\mathcal D}\cap\overline{\mathcal L} \subseteq\overline{\mathcal D}^{(1)}$$
  and for any $f = c+\ell\in \mathcal P_0\oplus \left(\overline{\mathcal D}\cap\overline{\mathcal L}\right)$ we have
   $$ \overline{\mathcal G}f = \overline{\mathcal G}^{(0)} \ell + \overline{\mathcal G}^{(1)}\ell. $$
\end{lemma}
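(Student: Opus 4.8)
The claim is about the domain and action of the generator $\overline{\mathcal G}$ of the strongly continuous semigroup $\overline T$ on the Banach space $\overline{\mathcal P}_1 = \mathcal P_0 \oplus \overline{\mathcal L}$. We have the block-triangular matrix representation \eqref{eq:bar-T-matrix}, and we want to show three things: that $\overline{\mathcal D}$ splits as $\mathcal P_0 \oplus (\overline{\mathcal D}\cap\overline{\mathcal L})$; that the $\overline{\mathcal L}$-part of the domain sits inside the domain $\overline{\mathcal D}^{(1)}$ of the corner generator $\overline{\mathcal G}^{(1)}$; and that the action decomposes as $\overline{\mathcal G}f = \overline{\mathcal G}^{(0)}\ell + \overline{\mathcal G}^{(1)}\ell$ for $f = c+\ell$. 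The whole point is that everything is governed by the difference quotients $\frac1h(\overline T_h f - f)$, and the block-triangular shape lets us read off each component separately.

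\textbf{Plan of attack.} The plan is to compute the difference quotient componentwise using the matrix form. For $f = c+\ell$ with $c\in\mathcal P_0$ and $\ell\in\overline{\mathcal L}$, formula \eqref{eq:bar-T-matrix} gives $\overline T_h f = c + \overline T_h^{(0)}\ell + \overline T_h^{(1)}\ell$, where the first two summands lie in $\mathcal P_0$ and the third in $\overline{\mathcal L}$ (note $c$ is fixed by $\overline T_h$ since $T$ is the identity on constants, cf.\ Remark \ref{rem:T_id_on_const}). Hence
$$ \frac{\overline T_h f - f}{h} = \frac{\overline T_h^{(0)}\ell}{h} + \frac{\overline T_h^{(1)}\ell - \ell}{h}, $$
and the two terms live in the \emph{complementary} closed subspaces $\mathcal P_0$ and $\overline{\mathcal L}$. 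First I would observe that $c\in\mathcal P_0$ always lies in $\overline{\mathcal D}$ with $\overline{\mathcal G}c = 0$, since $\overline T_h c = c$; this already shows $\mathcal P_0\subseteq\overline{\mathcal D}$. Next, the key step is that $f\in\overline{\mathcal D}$ if and only if the above difference quotient converges in norm as $h\searrow 0$, and because the two summands lie in the closed complementary subspaces $\mathcal P_0$ and $\overline{\mathcal L}$, the sum converges if and only if \emph{each summand converges separately}. This is where the topological structure does the real work: convergence in a direct sum of closed subspaces is equivalent to componentwise convergence, since the projections $\Pi_0,\Pi_1$ are bounded.

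\textbf{Deriving the three assertions.} From this componentwise convergence I would extract everything. Convergence of the $\overline{\mathcal L}$-component $\frac1h(\overline T_h^{(1)}\ell - \ell)$ is exactly the statement that $\ell\in\overline{\mathcal D}^{(1)}$, with limit $\overline{\mathcal G}^{(1)}\ell$; this yields $\overline{\mathcal D}\cap\overline{\mathcal L}\subseteq\overline{\mathcal D}^{(1)}$. Convergence of the $\mathcal P_0$-component $\frac1h \overline T_h^{(0)}\ell$ is precisely the condition defining $\overline{\mathcal G}^{(0)}$, giving limit $\overline{\mathcal G}^{(0)}\ell$. Applying the bounded projections $\Pi_0,\Pi_1$ to the limit then gives $\overline{\mathcal G}f = \overline{\mathcal G}^{(0)}\ell + \overline{\mathcal G}^{(1)}\ell$. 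For the domain decomposition $\overline{\mathcal D} = \mathcal P_0\oplus(\overline{\mathcal D}\cap\overline{\mathcal L})$, the inclusion $\supseteq$ follows because $\mathcal P_0\subseteq\overline{\mathcal D}$ and $f\in\overline{\mathcal D}$ whenever both components converge; the inclusion $\subseteq$ follows because any $f=c+\ell\in\overline{\mathcal D}$ forces both components to converge (by the equivalence above), so in particular $\ell\in\overline{\mathcal D}\cap\overline{\mathcal L}$.

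\textbf{Main obstacle.} The only genuine subtlety is justifying that norm-convergence of the total difference quotient is equivalent to norm-convergence of its two projected components. This rests on $\Pi_0$ and $\Pi_1$ being bounded on $\overline{\mathcal P}_1$ — i.e.\ that $\overline{\mathcal P}_1 = \mathcal P_0\oplus\overline{\mathcal L}$ is a topological direct sum of \emph{closed} subspaces. Since $\mathcal P_0$ is finite-dimensional (one-dimensional, being the constants) and $\overline{\mathcal L}$ is closed by assumption, this holds by the standard fact that a finite-dimensional subspace is always complemented by a closed subspace with bounded projections; alternatively it is built into the matrix notation of \eqref{eq:bar-T-matrix} where $\Pi_0,\Pi_1$ are declared to be projectors. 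Once boundedness of the projections is in hand, the rest is the routine application of the definitions of the various generators to the separated difference quotients.
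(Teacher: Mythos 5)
Your proposal is correct and follows essentially the same route as the paper: establish $\mathcal P_0\subseteq\overline{\mathcal D}$ with $\overline{\mathcal G}c=0$, use the block-triangular form of $\overline T_h$ to split the difference quotient into its $\mathcal P_0$- and $\overline{\mathcal L}$-components, and apply the bounded projections $\Pi_0,\Pi_1$ to read off $\overline{\mathcal G}^{(0)}\ell$ and $\overline{\mathcal G}^{(1)}\ell$. The only cosmetic difference is that the paper obtains $\ell=\Pi_1 f\in\overline{\mathcal D}$ by noting the domain is a vector space containing $f$ and $c$, whereas you obtain it via the equivalence of norm convergence with componentwise convergence; these are the same observation in different clothing.
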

\begin{proof}
  Clearly, $\mathcal P_0\subset\overline{\mathcal D}$ because for any $c\in\mathcal P_0$,
   $$ \frac{\overline{T}_hc-c}{h} = \frac{T_hc-c}{h} = 0, 
   $$
  and we find that $\overline{\mathcal G}c=0$. Thus, we have $\overline{\mathcal D} \supseteq \mathcal P_0\oplus \left(\overline{\mathcal D}\cap\overline{\mathcal L}\right)$. Let $f\in\overline{\mathcal D}$, $\ell:=\Pi_1f$ and $c:=f-\ell\in\mathcal P_0$. Since the domain of the generator is a vector space we find that $\ell\in\overline{\mathcal D}$. Consequently, we have 
  $$ \overline{\mathcal D} = \mathcal P_0\oplus \left(\overline{\mathcal D}\cap\overline{\mathcal L}\right).$$

  Now, let $\ell\in\overline{\mathcal D}\cap\overline{\mathcal L}$. Then we have
   $$ \frac{\overline{T}^{(1)}_h\ell-\ell}{h} = \Pi_1\frac{\overline{T}_h\ell-\ell}{h} \rightarrow \Pi_1\overline{\mathcal G}\ell $$
   and hence $\ell\in\overline{\mathcal D}^1$ and $\overline{\mathcal G}^{(1)}\ell = \Pi_1\overline{\mathcal G}\ell$.
\end{proof}

\begin{remark}
   Let $M$ be an $\mathbb R^d$-valued martingale, $A\in\mathbb R^{d\times d}$ and $b\in\mathbb R^d$. Then the process given by
    $$ Y_t = Y_0 + \int_0^t b+AY_s ds + M_t, \quad t\geq 0$$ 
   satisfies
     $$ Y_t = V_tY_0 + \int_0^tV_s b ds + \int_0^t V_{t-s}dM_s, \quad t\geq 0 $$
where $V_t = \exp(tA)$ for any $t\geq0$. These two representations are of course equivalent in finite dimension. On a more general structure, the second representation of $Y$ might not be good enough to recover the first. $V$ might be non-differentiable, and stochastic integration might be ill-defined so that only the process $R_t = \int_0^t V_{t-s}dM_s$ is available. This is the situation we describe in the next theorem.
\end{remark}

We come to our main result in this Section, showing the affine drift of polynomial processes. 
\begin{theorem}\label{t:affine drift} 
  There is a Banach space $(B^+,\|\cdot\|_{+})$ which contains $B$ as a sub-vector space, and an element $b\in B^+$. Moreover, the dual semigroup of $\overline{T}^{(1)}$ can be extended uniquely to a semigroup $U$ of bounded operators on $B^+$ such that
   $$ \tilde X_t = U_t\tilde X_0 + \int_0^t U_sbds + R_t $$
   where $R:\Omega\times\mathbb R_+\rightarrow B^+$ with $\E[\ell(R_t)] = 0$ for any $\ell\in\overline{\mathcal L}$. The integral in the representation above is understood in a weak*-sense, i.e.\ $\int_0^t U_sb\ ds$ is the unique element $c_t\in B$ (which exists) with $p(c_t) = \int_0^t p(U_sb) ds$ for any $p\in \mathcal C$ where $\mathcal C$ is the domain of $\overline{\mathcal G}$ restricted to $\overline{\mathcal L}$, i.e., $\overline{\mathcal D}\cap\overline{\mathcal L}$ being a dense subspace of $\overline{\mathcal L}$.
   
  If additionally, $\overline{T}$ can be extended to a group of bounded operators, then $M_t := (\overline{T}^{(1)}_{-t})^*R_t$ is a weak*-martingale, i.e.\ for any $\ell\in\overline{\mathcal L}$ one has that
   $\ell(M)$
  is an $\mathbb F$-valued martingale. In particular, one has
   $$ \tilde X_t = U_t\tilde X_0 + \int_0^t U_sbds + U_tM_t,\quad t\geq 0. $$
   
  If the domain of $\overline{\mathcal G}$ is $\overline{\mathcal{L}}$, then $\overline T$ can be extended to a group of bounded operators and $(B^+,\|\cdot\|_+)$ can be chosen to be equal to $(B,\|\cdot\|_B)$.
\end{theorem}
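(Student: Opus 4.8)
The plan is to realise $B^+$ as a dual space large enough to carry the (possibly unbounded) constant drift, and then to obtain the decomposition by an infinite–dimensional variation–of–constants argument driven by the conditional–expectation identity of the polynomial process. First I would set $\mathcal C:=\overline{\mathcal D}\cap\overline{\mathcal L}$ and equip it with the graph norm $\|p\|_{\mathcal C}:=\|p\|+\|\overline{\mathcal G}p\|$, under which it is a Banach space (the generator is closed) and a dense subspace of $\overline{\mathcal L}$. I then let $B^+$ be its continuous dual. Since $\mathcal C\hookrightarrow\overline{\mathcal L}$ continuously and densely, restriction of functionals gives an injective, weak*–dense embedding $B=\overline{\mathcal L}^*\hookrightarrow\mathcal C^*=B^+$. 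The drift element is $b:=\overline{\mathcal G}^{(0)}$, viewed via $\mathcal P_0\cong\mathbb F$ as a linear functional on $\mathcal C$; it is $\|\cdot\|_{\mathcal C}$–continuous because $p\mapsto\overline{\mathcal G}p$ is, so $b\in B^+$. The structural fact I would extract from Lemma \ref{l:Generator V}, namely $\overline{\mathcal D}=\mathcal P_0\oplus\mathcal C$ and hence $\Pi_1\overline{\mathcal D}=\mathcal C$, shows that $\overline T^{(1)}_s$ maps $\mathcal C$ into $\mathcal C$ boundedly for $\|\cdot\|_{\mathcal C}$; its adjoint is then the desired extension $U$ of the dual semigroup to $B^+$, with uniqueness following from weak*–density of $B$.

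Next I would identify the drift integral. For $p\in\mathcal C$ I differentiate $s\mapsto\overline T^{(0)}_sp=\Pi_0\overline T_sp$ and use $\overline{\mathcal G}(\text{const})=0$ together with $\overline T^{(1)}_sp\in\mathcal C$ to obtain $\tfrac{d}{ds}\overline T^{(0)}_sp=\overline{\mathcal G}^{(0)}(\overline T^{(1)}_sp)=\langle U_sb,p\rangle$. Integrating, and using $\overline T^{(0)}_0p=\Pi_0 p=0$ for $p\in\overline{\mathcal L}$, gives $\overline T^{(0)}_tp=\int_0^t\langle U_sb,p\rangle\,ds$, so the weak*–integral $\int_0^t U_sb\,ds$ exists and equals $c_t:=\overline T^{(0)}_t$, which lies in $B$ since $\overline T^{(0)}_t=\Pi_0\overline T_t|_{\overline{\mathcal L}}$ is bounded. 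This is exactly where the phenomenon ``$b\notin B$ but $\int_0^t U_sb\,ds\in B$'' is captured. I then set $R_t:=\tilde X_t-U_t\tilde X_0-c_t$ and, from the polynomial–process identity $\E[\ell(X_t)\mid\mathcal F_s]=\overline T^{(0)}_{t-s}\ell+(\overline T^{(1)}_{t-s}\ell)(X_s)$ evaluated at $s=0$, take expectations to get $\E[\ell(R_t)]=0$. This identity is established first for $\ell\in\mathcal L$, where integrability is guaranteed by Definition \ref{def:poly-process}, and then extended to $\overline{\mathcal L}$ by a density argument using continuity of $\delta_{X_s}$ from Assumption \ref{ass:affine}(2).

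For the group statement I would put $M_t:=U_{-t}R_t=(\overline T^{(1)}_{-t})^*R_t$, so that $U_tM_t=R_t$ yields the final displayed representation at once. To see $M$ is a weak*–martingale I compute $\E[\langle M_t,\ell\rangle\mid\mathcal F_s]$, substitute the same conditional–expectation identity, and compare with $\langle M_s,\ell\rangle$; the remaining cancellation reduces to the cocycle identity $\overline T^{(0)}_{a+b}=\overline T^{(0)}_b+\overline T^{(0)}_a\overline T^{(1)}_b$ read off from the matrix form \eqref{eq:bar-T-matrix}, its consequence $\overline T^{(0)}_t\overline T^{(1)}_{-t}=-\overline T^{(0)}_{-t}$, and the group law for $\overline T^{(1)}$. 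Finally, if $\mathcal C=\overline{\mathcal L}$, then $\overline{\mathcal D}=\overline{\mathcal P}_1$, so the closed operator $\overline{\mathcal G}$ is everywhere defined and hence bounded; thus $\overline T_h=\exp(h\overline{\mathcal G})$ extends to a group, $b=\overline{\mathcal G}^{(0)}$ is $\|\cdot\|$–bounded so $b\in B$, and the graph norm is equivalent to $\|\cdot\|$, giving $B^+=B$.

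The hard part will be the careful construction of $B^+$ and the verification that the dual semigroup genuinely extends to it while the weak*–drift–integral both exists and returns to $B$ — this is the only place the genuinely infinite–dimensional obstructions (drift outside $B$, dual semigroup merely weak*–continuous) appear. The one technical nuisance I anticipate is the passage of the mean–zero and martingale identities from $\mathcal L$, where integrability is automatic, to all of $\overline{\mathcal L}$, which needs an approximation/integrability argument built on the continuity of $\delta_x$. Once $\overline T^{(1)}\mathcal C\subseteq\mathcal C$ and $c_t=\overline T^{(0)}_t$ are secured, the martingale computation is routine, if slightly tedious, algebra with the cocycle identity.
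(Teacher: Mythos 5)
Your proposal follows essentially the same route as the paper's proof: $B^+$ is the dual of $\mathcal C=\overline{\mathcal D}\cap\overline{\mathcal L}$ under the graph norm, $b=(\overline{\mathcal G}^{(0)}|_{\mathcal C})^*1$, $U$ is the adjoint of $\overline T^{(1)}|_{\mathcal C}$, the weak*-integral is identified with $T_t^{(0)}\in B$ via the fundamental theorem of calculus for the semigroup, $R$ is defined by subtraction with mean zero from the polynomial identity, and the martingale property of $U_{-t}R_t$ follows from the same conditional-expectation computation using the triangular (cocycle) structure of $\overline T$. The only addition is your explicit closed-graph argument for the final claim, which the paper leaves implicit; otherwise the two arguments coincide.
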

\begin{proof}
  {\em Construction of the extension space $B^+$:}
    Let $\mathcal C$ be the domain of the generator $\overline{\mathcal G}$ restricted to $\overline{\mathcal L}$ and define the norm 
    $$\|f\|_\mathcal {C}:= \|f\| + \|\overline{\mathcal G}f\|,\quad f\in\mathcal C.$$
    Then $(\mathcal C,\|\cdot\|_\mathcal {C})$ is a Banach space and we denote its dual space by $(B^+,\|\cdot\|_{+})$. Moreover, Lemma \ref{l:Generator V} yields that $\mathcal C$ is $\|\cdot\|$-dense in $\overline{\mathcal L}$.
    For orientation we have the following diagram:
   $$ \begin{matrix} (\mathcal L,\|\cdot\|) & \text{has dual space} & (B,\|\cdot\|_{B}) \\ \rotatebox[origin=c]{90}{$\subseteq$} && \rotatebox[origin=c]{270}{$\subseteq$} \\ (\mathcal C,\|\cdot\|_{\mathcal C}) & \text{has dual space} & (B^+,\|\cdot\|_{B^+}) \end{matrix}$$
   We have $\mathcal C$ is invariant under $\overline T^{(1)}$ and $\overline T^{(1)}$ restricted to $\mathcal C$ is bounded with respect to the $\|\cdot\|_{\mathcal C}$-operator norm. We denote the dual semigroup of operators of $(\overline T_h^{(1)}|_{\mathcal C})_{h\geq 0}$ by $U$, i.e.\ $(U_h)_{h\geq 0}$ is a semigroup of bounded operators on $B^+$. Note that $U_h|_B$ is the dual of $\overline T^{(1)}_h$ for any $h\geq 0$.
 
 {\em Construction of the constant drift part $b\in B^+$:} 
   We have $\overline{\mathcal G}^{(0)}$ is an operator from $\mathrm{dom}(\overline{\mathcal G})$ to $\mathcal P_0$, and therefore find that $\overline{\mathcal G}^{(0)}|_{\mathcal C}:\mathcal C\rightarrow \mathcal P_0$. Hence, for its dual operator we have $(\overline{\mathcal G}^{(0)}|_{\mathcal C})^*:(\mathcal P_0)^*\rightarrow B^+$, where $(\mathcal P_0)^*$ is the dual of the one-dimensional space $\mathcal P_0$. Since $\mathcal P_0$ is one-dimensional, there is $1\in (\mathcal P_0)^*$ such that $1(p) = p(x)$ for any $x\in E$. We define $b:= (\overline{\mathcal G}^{(0)}|_{\mathcal C})^*1$.

 {\em Existence of the weak*-integral on $B^+$ with values in $B$:} Let $p\in\mathcal C$ and $s\geq 0$. We have $\Pi_0\overline{\mathcal G}\overline{T}_s p \in \mathcal P_0$ and, hence, a constant function. Thus for any $x\in E$ we find that $f_p(s) := (\Pi_0\overline{\mathcal G}\overline{T}_s p)(x)$ is its value. By the $c_0$-semigroup property of $T$ we find that $s\mapsto f_p(s)$ is continuous and, hence Lebesgue-integrable. Also, observe that $p(U_s b) = f_p(s)$ by duality. 

 We have 
  \begin{align}
    \int_0^t p(U_s b) ds &= \int_0^t f_p(s) ds \nonumber \\
                         &= \Pi_0 \int_0^t\overline{\mathcal G} \overline{T}_sp ds (x) \nonumber\\
                         &= \Pi_0 (T_tp-T_0p) (x) \nonumber\\
                         &= (T_t^{(0)}-\Pi_0)p(x) \nonumber\\
                         &= T_t^{(0)}p(x),\label{eq:drift-duality-repr}
\end{align}   
 where the integral in the second line is a Bochner-integral in $(\overline{\mathcal L},\|\cdot\|)$ and the point $x\in E$ is arbitrary because both $T^{(0)}_t$ and $\Pi_0$ map into the constant functions $\mathcal P_0$. The last equality holds because simply $\Pi_0(\overline{\mathcal L})=\{0\}$. The functional $c_t:\overline{\mathcal L}\rightarrow \mathbb F,p\mapsto (T_t^{(0)}-\Pi_0)p(x)$ is continuous linear and hence an element of $B\subseteq B^+$. From the equation above we find that
  $$ \int_0^t p(U_sb) ds = p(c_t) $$
  and therefore $c_t=\int_0^t U_sb ds$ as a weak*-integral on $B^+$.
 
 {\em Construction of $R$:}
  We define $R_t := \tilde X_t - U_t\tilde X_0 - \int_0^t U_sb ds$ for any $t\geq 0$. Since $B$ is $U$-invariant and by the above argument $\int_0^tU_sb ds \in B$, we find that $R_t\in B$ for any $t\geq 0$, $P$-a.s. We next show that $\E[|\ell(R_t)|]<\infty$ for $\ell\in\overline{\mathcal L}$. We have
  $$
  \ell (R_t) = \ell(\tilde X_t)-\ell(U_t\tilde X_0)-\ell(\int_0^tU_sbds),
  $$
  and the two last summands on the right hand side are obviously absolutely integrable. The first summand is $\ell(\tilde X_t) = \ell(X_t)$, which is absolutely integrable by assumption on $X$. (Recall Property (1) in Definition \ref{def:poly-process}). 
  
  Let $p\in\mathcal C$. We have by polynomial property of $X$
  \begin{align*}
    \E[ p(R_t) ] &= \E\left[ p(\tilde X_t) - T^{(1)}_tp(\tilde X_0) - p\left(\int_0^tU_sbds\right) \right] \\&= T_tp(\tilde X_0) - T^{(1)}_tp(\tilde X_0) - p\left(\int_0^tU_sbds\right) \\
    &= T_tp(\tilde X_0) - T^{(1)}_tp(\tilde X_0) - T_t^{(0)}p(\tilde{X}_0) \\
    &= 0.
\end{align*}
 Since $R$ is $B$-valued and we have the required identity on $\mathcal C$, we can extend it to $\overline{\mathcal L}$ by a density argument. Also note that $R_0 = 0$.

 {\em Martingale representation when $\overline T$ can be extended:} We now assume that $\overline T$ can be extended to a group of bounded operators and let $p\in\mathcal C$. Note that $(\overline T_{-t}^{(1)})^*$ is the inverse of $U_t$ and we will simply denoted it as $U_{-t}$. Obviously, $(U_t)_{t\in\mathbb R}$ is a group of operators on $B^+$. We find that
    $$ M_t = U_{-t}R_t = U_{-t}\tilde X_t - \tilde X_0 - U_{-t}\int_0^tU_sbds. $$
   Moreover, 
    $$ U_{-t}\int_0^tU_sbds = \int_0^t U_{s-t}b ds $$
   because for $q\in \mathcal C$ one has $\overline T_{-t}^{(1)}q \in\mathcal C$ and
    $$ q(U_{-t}\int_0^tU_sbds) = \overline T_{-t}^{(1)}q(\int_0^tU_sbds) = \int_0^t \overline T_{-t}^{(1)}q(U_sb) ds = \int_0^t q(U_{s-t}b) ds. $$
   Also we have by the polynomial property of $X$
   \begin{align*}
     \E[p(U_{-t}\tilde X_t)|\mathcal F_s] &= \E[\overline T_{-t}^{(1)}p(\tilde X_t)|\mathcal F_s] \\
        &= \overline T_{t-s}\overline T_{-t}^{(1)}p(\tilde X_s) \\
        &= \overline T^{(0)}_{t-s}\overline T_{-t}^{(1)}p(\tilde X_s) + \overline T_{-s}^{(1)}p(\tilde X_s) \\
        &= \int_0^{t-s} \overline T_{-t}^{(1)}p(U_rb) dr + p(U_{-s}\tilde X_s) \\
        &= \int_0^{t-s} p(U_{r-t}b) dr + p(U_{-s}\tilde X_s).
   \end{align*}
   In the third equality we used the matrix representation of $\overline T$ in \eqref{eq:bar-T-matrix}, and in the fourth equality we make use of the representation \eqref{eq:drift-duality-repr}. 
   Hence, we find that 
   \begin{align*}
     \E[p(M_t)|\mathcal F_s] &= \int_0^{t-s} p(U_{r-t}b) dr + p(U_{-s}\tilde X_s) - p(\tilde X_0) - \int_0^tp(U_{r-t}b)dr \\
      &= p(M_s)
   \end{align*}
   as required. Since $M$ is $B$-valued we find that $\E[p(M_t)|\mathcal F_s] = p(M_s)$ for any $p\in \overline{\mathcal L}$. The Theorem is proven. 
\end{proof}

\begin{corollary}\label{c:affine drift}
   Under the assumptions and notations of Theorem \ref{t:affine drift} and for fixed $s\geq0 $ there is a progressively measurable process $(R^s_t)_{t\geq0}$ with $\E[R^s_t|\mathcal F_s] = 0$ for any $t\geq 0$ and
    $$ \tilde X_t = U_{t-s}\tilde X_s + \int_s^t U_rbdr + R^s_t $$
   for any $t\geq s$.
\end{corollary}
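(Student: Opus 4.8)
The plan is to reproduce the construction of Theorem \ref{t:affine drift} verbatim, but with the time origin moved from $0$ to $s$ and the expectation $\E[\cdot]$ replaced throughout by the conditional expectation $\E[\cdot\,|\,\mathcal F_s]$. Concretely, I would set
$$ R^s_t := \tilde X_t - U_{t-s}\tilde X_s - \int_s^t U_{r-s}b\, dr,\qquad t\geq s, $$
where the integral is the weak*-integral of Theorem \ref{t:affine drift}; note that the time-shifted drift $\int_s^t U_{r-s}b\,dr = \int_0^{t-s}U_r b\,dr$ is precisely the drift of Theorem \ref{t:affine drift} after the substitution $r\mapsto r-s$, which is the natural object obtained by restarting the polynomial process at time $s$. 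As in the theorem, $U_{t-s}\tilde X_s\in B$ and the weak*-integral lies in $B$, so $R^s_t$ is $B$-valued; the bound $\E[|\ell(R^s_t)|]<\infty$ for $\ell\in\overline{\mathcal L}$ follows exactly as there, the only stochastic term being $\ell(\tilde X_t)=\ell(X_t)$, which is integrable by Definition \ref{def:poly-process}(1).

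For progressive measurability I would argue functional-by-functional. For $\ell\in\overline{\mathcal L}$ the map $t\mapsto\ell(\tilde X_t)=\ell(X_t)$ is adapted with c\`adl\`ag paths, hence progressively measurable; the map $t\mapsto\ell(U_{t-s}\tilde X_s)=(\overline T^{(1)}_{t-s}\ell)(X_s)$ is, for each fixed $\omega$, continuous in $t$ by strong continuity of $\overline T^{(1)}$ and $\mathcal F_s$-measurable in $\omega$; and the drift integral is deterministic and continuous in $t$. Summing these three yields progressive measurability of $\ell(R^s)$, which is the weak* meaning of progressive measurability of $R^s$.

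The heart of the argument is the conditional mean-zero property. Fix $p$ in the $\|\cdot\|$-dense subspace $\mathcal C=\overline{\mathcal D}\cap\overline{\mathcal L}$ of $\overline{\mathcal L}$ and $t\geq s$. Pulling the $\mathcal F_s$-measurable terms $p(U_{t-s}\tilde X_s)$ and $p(\int_s^t U_{r-s}b\,dr)$ out of the conditional expectation gives
$$ \E[p(R^s_t)\,|\,\mathcal F_s] = \E[p(\tilde X_t)\,|\,\mathcal F_s] - p(U_{t-s}\tilde X_s) - p\Big(\int_s^t U_{r-s}b\,dr\Big). $$
The polynomial property of $X$ together with the extension to the completion gives $\E[p(\tilde X_t)\,|\,\mathcal F_s]=\overline T_{t-s}p(X_s)$, and the block decomposition \eqref{eq:bar-T-matrix} yields, for $p\in\overline{\mathcal L}$,
$$ \overline T_{t-s}p = \overline T^{(0)}_{t-s}p + \overline T^{(1)}_{t-s}p, $$
the first summand being a constant function. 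By duality of $U_{t-s}$ with $\overline T^{(1)}_{t-s}$ one has $p(U_{t-s}\tilde X_s)=(\overline T^{(1)}_{t-s}p)(X_s)$, while the drift–duality identity \eqref{eq:drift-duality-repr}, shifted in time, gives
$$ p\Big(\int_s^t U_{r-s}b\,dr\Big)=\int_0^{t-s}p(U_r b)\,dr=\overline T^{(0)}_{t-s}p(x). $$
Substituting these three identities, the $\overline T^{(1)}_{t-s}$-terms cancel and the two constant $\overline T^{(0)}_{t-s}$-terms cancel (a constant function takes the same value at $X_s$ and at any $x$), so $\E[p(R^s_t)\,|\,\mathcal F_s]=0$. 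Since $R^s_t$ is $B$-valued and $\mathcal C$ is $\|\cdot\|$-dense in $\overline{\mathcal L}$, this extends by continuity to every $\ell\in\overline{\mathcal L}$, which is the asserted weak* identity $\E[R^s_t\,|\,\mathcal F_s]=0$.

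The one genuinely delicate step, which I would treat exactly as in the proof of Theorem \ref{t:affine drift}, is the identity $\E[p(\tilde X_t)\,|\,\mathcal F_s]=\overline T_{t-s}p(X_s)$ for $p\in\mathcal C$: the polynomial property of Definition \ref{def:poly-process}(2) is available only for genuine polynomials $p\in\mathcal L$, so for $p$ in the completion one must pass the approximating limit through the conditional expectation, using the $\|\cdot\|$-continuity of $\delta_{X_t}$ from Assumption \ref{ass:affine}(2) together with the integrability bound above. By contrast, no conditional Fubini interchange is needed here, since the drift term is deterministic and $U_{t-s}\tilde X_s$ is already $\mathcal F_s$-measurable; once the limit-interchange and the density extension are in place, the remaining content is a bookkeeping repetition of the theorem's computation with $\E[\cdot]$ replaced by $\E[\cdot\,|\,\mathcal F_s]$ and the origin shifted to $s$.
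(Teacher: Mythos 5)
Your proof is correct and produces the same decomposition as the paper, but by a slightly different route: the paper's own proof is a two-line restart argument --- it sets $Y_u:=X_{u+s}$, observes that $Y$ is again a $\mathcal P$-polynomial process with action $T$ relative to the shifted filtration $(\mathcal F_{u+s})_{u\geq0}$, applies Theorem \ref{t:affine drift} to $Y$, and defines $R^s_t:=R_{t-s}$ (and $R^s_t:=0$ for $t\in[0,s]$) --- whereas you inline the theorem's computation with the time origin moved to $s$ and with $\E[\,\cdot\,|\,\mathcal F_s]$ in place of $\E[\,\cdot\,]$. The two are equivalent in substance, and your version has the merit of making explicit why the \emph{conditional} mean-zero property $\E[p(R^s_t)\,|\,\mathcal F_s]=0$ holds: Theorem \ref{t:affine drift} as stated only asserts the unconditional $\E[\ell(R_t)]=0$, so the paper's black-box application strictly yields only that, and one must note (as you do, and as is implicit in the theorem's proof, whose computation in fact conditions on the initial $\sigma$-field) that the cancellation of the $\overline T^{(0)}_{t-s}$- and $\overline T^{(1)}_{t-s}$-terms takes place $\mathcal F_s$-pointwise. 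Two minor points: your drift term $\int_s^tU_{r-s}b\,dr=\int_0^{t-s}U_rb\,dr$ is exactly what the paper's restart argument produces, so the corollary's displayed $\int_s^tU_rb\,dr$ should be read accordingly; and you should also define $R^s_t:=0$ for $t<s$, since the corollary asks for a process indexed by all $t\geq0$.
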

\begin{proof}
   Define $R^s_t = 0$ for $t\in[0,s]$ and $Y_u := X_{u+s}$ for $u\geq 0$. $Y$ is a polynomial process with action $T$ relative to the filtration $(\mathcal F_{u+s})_{u\geq 0}$. According to Theorem \ref{t:affine drift} there is a mean-zero process $(R_u)_{u\geq 0}$ which is progressively measurable with respect to the filtration $(\mathcal F_{u+s})_{u\geq 0}$ such that
    $$ Y_u = Y_0 + \int_0^u U_rbdr + R_u $$
for any $u\geq 0$. Define $R^s_t := R_{t-s}$ for $t>s$. The claim follows.
\end{proof}

\begin{remark}
  Note that the decomposition
   \begin{equation}
   \label{eq:mild-sol}
       \tilde X_t = U_t\tilde X_0 + \int_0^t U_rb dr + U_tM_t
       \end{equation}
  which appears in Theorem \ref{t:affine drift} when $T$ can be extended to a $c_0$-group implies that
   $$ \tilde X_t = U_{t-s}\tilde X_s + \int_0^{t-s}U_rbds + U_{t}(M_t-M_s)$$
   for any $0\leq s\leq t$. 
   
If the dual $A$ of $\overline{\mathcal G}^{(1)}$ is densely defined and generates a $c_0$-semigroup (which then is $U$), the expression in  \eqref{eq:mild-sol} is the mild solution (see Peszat and Zabczyk \cite{peszat.zabczyk.07}) of  
    $$ d\tilde X_t = (b+A\tilde X_t) dt + dN_t $$
    where $N_t := \int_0^t U_{-s} dM_s$. This holds true whenever we have available a martingale integration theory (see for example van Neerven \cite{vNeerven} for stochastic integration on Banach spaces).
    Thus, $\tilde X$ is an $N$-driven Ornstein-Uhlenbeck process, a formula which is true for locally finite polynomial processes, cf.\ Theorem \ref{t:affine drift algebraic}. 
\end{remark}

The following example shows that the constant part of 'the drift' of a polynomial process can actually point outside the space $B$, i.e.\ $b\in B^+\backslash B$. 
This is only possible if the semigroup $U$ together with the Bochner integral "smoothen" the constant drift vector back into the space $B$ because we know that $\int_0^tU_sbds \in B$ from Theorem \ref{t:affine drift}.
\begin{example}\label{ex:outside drift}
  Let $E=l^2(\mathbb N,\mathbb C)$ equipped with the norm $\|x\|^2_E:=\sum_{n\in\mathbb N}|x|^2$, $(\mathcal L,\|\cdot\|)$ be the dual space and $\mathcal P$ be the algebra generated by $\mathcal L$ where the multiplication is the image-wise product. Since $(E,\|\cdot\|_E)$ is reflexive, it is (up to isometric isomorphism) the dual space of $(\mathcal L,\|\cdot\|)$. In particular, we have $B \cong E$.
 Let $W$ be the Wiener process with covariance operator $Q$ on $E$ given by $(Qx)_n := \frac{x_n}{(1+n)^2}$ for $x\in E$. Let $(Ax)_n := 2\pi in x_n$. $A$ is a normal operator and it generates the $c_0$-group $$(U_hx)_n = e^{2\pi inh}x_n.$$ Moreover, $A^* = -A$, which generates the $c_0$-group $(U_{-h})_{h\in\mathbb R}$. Note that if $x\in \dom(A^*)=\dom(A)$, then $\sum_{n\in\mathbb N} |x_n| < \infty$ because $Ax\in B$ and, hence, $x_n = (Ax)_n \frac{1}{2\pi i n}$ is the product of two elements in $B$, from which it follows $\dom(A)\subseteq l^1(\mathbb N,\mathbb R)$. Now define $\Gamma:\dom(A^*)\rightarrow\mathbb C,x\mapsto \sum_{n\in\mathbb N}x_n$ which corresponds to the element $$b := (1)_{n\in\mathbb N} \in l^\infty(\mathbb N,\mathbb C) =: B^+.$$ $U$ extends naturally to $B^+$ and
   $$ \left(\int_0^t U_sb\ ds\right)_n = \frac{e^{2\pi int}-1}{2\pi in},\quad t\geq 0,n\in\mathbb N,n\geq 1. $$
   Thus, $\int_0^t U_sbds \in E$. We now define
    $$ X_t := \int_0^t U_sb ds + \int_0^t U_{t-s}dW_s,\quad t\geq 0 $$
   which is an $E$-valued polynomial process which can be interpreted as the mild solution to the SPDE
    $$ dX_t = (b+ AX_t) dt + dW_t. $$
    We refer to Peszat and Zabczyk \cite{peszat.zabczyk.07} for mild solutions of SPDEs.
\end{example}
It is interesting to notice that the state space $E$ of the polynomial process introduced in the Example above is a Hilbert space. Even for such nice state spaces we may have polynomial processes where the drift $b$ is outside the state space. On the other hand, this can only happen when the semigroup $U$ is sufficiently regular.

We close this section with the following corollary.
\begin{corollary} 
  Suppose 
  $(\mathcal P_1,\|\cdot\|)$ is a Banach space and $T$ is locally finite.
  Then $T$ extends to a group and we denote $U$ to be the dual group of $T|_{\mathcal L}$. 
  
  Furthermore, there is $b\in B$ and a weak*-martingale $M$ such that
    $$ \tilde X_t = U_t\tilde X_0 + \int_0^t U_sb ds + U_t M_t. $$
    
  Also, $U$ is differentiable with respect to $t$ and its generator $A:=\partial_tU_t|_{t=0} \in L(B)$ is the dual of $\mathcal G$. 
\end{corollary}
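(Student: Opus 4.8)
The plan is to reduce the entire statement to a single analytic fact, namely that the generator $\overline{\mathcal G}^{(1)}$ of the $c_0$-semigroup $\overline T^{(1)}$ is a \emph{bounded} operator, and then to read off everything from Theorem \ref{t:affine drift} and a routine dualization. First I would observe that, since $(\mathcal P_1,\|\cdot\|)$ is already complete, the completion appearing in Assumption \ref{ass:affine} is $\mathcal P_1$ itself; hence $\overline{\mathcal L}=\mathcal L$ and $\overline T=T|_{\mathcal P_1}$ is a $c_0$-semigroup, so $\overline T^{(1)}=T^{(1)}$ is a $c_0$-semigroup on the Banach space $(\mathcal L,\|\cdot\|)$ whose generator $\overline{\mathcal G}^{(1)}$ is automatically closed.

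The heart of the argument is to show that $\dom(\overline{\mathcal G}^{(1)})=\mathcal L$. Fix $p\in\mathcal L$. By Remark \ref{r:F=lf1}, local finiteness implies Assumption (F), so there is a finite-dimensional $T^{(1)}$-invariant subspace $V_p\ni p$. Exactly as in the proof of Lemma \ref{l:D=P}, the restriction of $\overline T^{(1)}$ to the finite-dimensional space $V_p$ is a weakly right-continuous $c_0$-semigroup, hence norm-continuous and differentiable; therefore the norm-limit $\lim_{h\searrow0}h^{-1}(\overline T^{(1)}_hp-p)$ exists in $V_p$. Since all norms on $V_p$ are equivalent and the $\|\cdot\|$-topology of $\mathcal L$ restricted to $V_p$ is this unique finite-dimensional topology, the limit exists in $(\mathcal L,\|\cdot\|)$ as well. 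Thus $p\in\dom(\overline{\mathcal G}^{(1)})$ and $\overline{\mathcal G}^{(1)}p=\Pi_1\mathcal Gp$ coincides with the pointwise generator of Definition \ref{def:poly-action}. As $p$ was arbitrary, $\dom(\overline{\mathcal G}^{(1)})=\mathcal L$. Since $\overline{\mathcal G}^{(1)}$ is closed and everywhere defined on a Banach space, the Closed Graph Theorem yields $\overline{\mathcal G}^{(1)}\in L(\mathcal L)$.

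With boundedness in hand, the remaining claims are immediate. The bounded generator gives $\overline T^{(1)}_h=\exp(h\overline{\mathcal G}^{(1)})$, which is norm-continuous and extends to a norm-continuous group of bounded operators; by the matrix form \eqref{eq:bar-T-matrix} the whole semigroup $\overline T$, and hence $T|_{\mathcal P_1}$, extends to a group, proving the first assertion. Dualizing, $A:=(\overline{\mathcal G}^{(1)})^*\in L(B)$ and $U_h=(\overline T^{(1)}_h)^*=\exp(hA)$ is a norm-continuous group on $B$ that is differentiable in $t$ with $\partial_tU_t|_{t=0}=A$ being the dual of $\mathcal G$ (here $A$ is the dual of $\overline{\mathcal G}^{(1)}$, which agrees with the $\mathcal L$-part of $\mathcal G$), giving the last assertion. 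Finally, because $\mathcal C=\overline{\mathcal D}\cap\overline{\mathcal L}=\dom(\overline{\mathcal G}^{(1)})=\mathcal L=\overline{\mathcal L}$, we are precisely in the situation of the last sentence of Theorem \ref{t:affine drift}: the domain of $\overline{\mathcal G}$ restricted to $\overline{\mathcal L}$ is all of $\overline{\mathcal L}$. Hence $B^+$ may be taken equal to $B$, so $b\in B$, and the representation $\tilde X_t=U_t\tilde X_0+\int_0^tU_sb\,ds+U_tM_t$ with $M$ a weak*-martingale follows directly from that theorem.

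The hard part will be the intermediate step $\dom(\overline{\mathcal G}^{(1)})=\mathcal L$: one must upgrade the merely finite-dimensional, norm-continuous behaviour on each invariant subspace $V_p$ into genuine norm-differentiability in the ambient Banach space $\mathcal L$. This is exactly what local finiteness provides, and it is what licenses the Closed Graph argument; once boundedness of $\overline{\mathcal G}^{(1)}$ is secured, the group extension, the dualization, and the invocation of Theorem \ref{t:affine drift} are all routine.
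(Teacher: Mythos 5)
Your proposal is correct and follows essentially the same route as the paper: local finiteness yields finite-dimensional invariant subspaces, hence strong continuity with an everywhere-defined (and therefore, by the Closed Graph Theorem, bounded) generator, the exponential formula, extension to a group, dualization, and an appeal to Theorem \ref{t:affine drift}. The one small repair needed is to run your finite-dimensional argument with the full $T$-invariant subspaces $\mathcal P^0_p\subseteq\mathcal P_1$ from the proof of Lemma \ref{l:D=P} rather than only the $T^{(1)}$-invariant $V_p\subseteq\mathcal L$, so that you obtain $\dom(\overline{\mathcal G})=\mathcal P_1$ (and hence $\mathcal C=\overline{\mathcal L}$) directly; Lemma \ref{l:Generator V} only provides the inclusion $\overline{\mathcal D}\cap\overline{\mathcal L}\subseteq\dom(\overline{\mathcal G}^{(1)})$, not the converse that your identification $\mathcal C=\dom(\overline{\mathcal G}^{(1)})$ implicitly uses.
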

\begin{proof}
   Proposition \ref{p:Matrix exponential} yields that $T$ can be extended to a group of operators on $\mathcal P_1$. Also, $T$ is strongly continuous because by assumption for any $p\in \mathcal P_1$ there is a finite dimensional space $V_p\subseteq \mathcal P_1$ with $p\in V_p$ and $T_t(V_p)\subseteq V_p$ for any $t\geq 0$. Proposition \ref{p:Matrix exponential} yields strong continuity of $T_t|_{V_p}$ and that its generator has domain $V_p$. Consequently, $T$ is strongly continuous and its generator has domain $\mathcal P_1$. Then its generator is a bounded operator and
    $$ T_t = \exp(t \mathcal G) := \sum_{n=0}^\infty \frac{\mathcal G^n}{n!}.$$
  
  Its dual group $U$ satisfies
   $$ U_t = \exp(t A) $$
   where $A$ is the dual of $\mathcal G$ and the claim follows from Theorem \ref{t:affine drift}.
\end{proof}

\section{Covariance Structure of Polynomial Diffusions}
Similar as in Section~\ref{covar:section:loc:finite} for locally finite polynomial processes we shall now analyse the covariance structure for polynomial diffusions under continuity assumptions. 
  \begin{assumption}
\label{ass:affine extended}
There is a complete norm $\|\cdot\|$ on $\mathcal P_4$ such that:
  \begin{enumerate}
    \item $(T_h|_{\mathcal P_4})_{h\geq 0}$ is a strongly continuous semigroup of bounded operators.
    \item The maps $\delta_x:\mathcal P_4\rightarrow \mathbb F, f\mapsto f(x)$ are $\|\cdot\|$-continuous for any $x\in E$.
    \item The linear functional $\Gamma:\mathcal P_4\rightarrow \mathbb F,p\mapsto \E[p(X_0)]$ is continuous.
    \end{enumerate}
  Also we assume that: 
   \begin{enumerate}
   \setcounter{enumi}{3}
    \item $\mathcal P_1$ is separating for $E$, i.e.\ for any $x,y\in E$ there is $p\in\mathcal P_1$ with $p(x)\neq p(y)$.
     \item $\mathbb F=\mathbb R$
       \item $\mathcal P_1\cdot\mathcal P_1 \subseteq\mathcal P_2$ and $\mathcal P_2\cdot\mathcal P_2 \subseteq\mathcal P_4$.
   \end{enumerate}
\end{assumption}
Recall that the generator $\mathcal G$ had been defined (in Definition \ref{def:poly-action}) in a way which does not use the topology on $\mathcal P_4$. However, if $p$ is in the domain $\mathcal C$ of the generator of the $c_0$-semigroup $(T_h|_{\mathcal P_4})_{h\geq 0}$, i.e.\ there is $q\in\mathcal P_4$ such that $$ q = \lim_{h\searrow 0} \frac{T_hp-p}{h}, $$
then $p\in\mathcal D$ (which was defined in Definition \ref{def:poly-action}) because $\delta_x$ is continuous which yields
 $$ q(x) = \lim_{h\searrow 0} \frac{T_hp(x)-p(x)}{h},\quad x\in E. $$
 In this case we also have $\mathcal Gp = q$ which reveals that $\mathcal G|_{\mathcal C}$ is the generator of the $c_0$-semigroup $(T_h|_{\mathcal P_4})_{h\geq 0}$. Also, note that $\mathcal Gp\in\mathcal P_n$ for any $p\in\mathcal C\cap \mathcal P_n$ for $n=0,\dots,4$ because $\mathcal P_n$ is a closed $T$-invariant space.

\begin{remark}[Incomplete norm]
  In Assumption \ref{ass:affine extended} we assumed that the norm is complete. If the norm is not complete one would like to pass to the completion and replace $\mathcal P_n$ with its completion for $n=1,\dots,4$. 
  
  This is unproblematic for assumption (2) to (5) as $\delta_x$ and $\Gamma$ can be extended to bounded linear maps on the completion. The continuous extension of $T_h$ to the completion is still a bounded operator and the family of continuations is still a semigroup. However, the family of continuations is strongly continuous if and only if $\sup_{h\in[0,1]}\|T_h\|_{\mathrm{op}}$ is bounded. Finally, assumption (6) does not necessarily carry over to the completion. A sufficient condition to still hold on the completion is that the multiplication from $\mathcal P_2\times \mathcal P_2\rightarrow \mathcal P_4$ is a bounded bilinear map. 
  
  Finally we like to note that the degree of some polynomials might be lower after extending to the completion (since it is possible that some elements of $\mathcal P_n$ can be approximated by elements in $\mathcal P_{n-1}$ for $n=2,3,4$) but this does not pose any problem.
\end{remark}


We first state a simple consequence of the fundamental theorem of calculus for the semigroup $T$.
\begin{lemma}\label{l:semigroup diff}
   Let $p\in\mathcal C$ 
   Then we have
    $$ T_tp = T_sp + \int_s^t \mathcal G T_rp dr,\quad t\geq s\geq 0. $$
\end{lemma}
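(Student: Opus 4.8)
The plan is to recognise this identity as nothing more than the fundamental theorem of calculus for the strongly continuous semigroup $(T_h|_{\mathcal P_4})_{h\geq 0}$, applied to an element $p$ lying in the domain $\mathcal C$ of its generator. By the remark immediately preceding the lemma, $\mathcal G|_{\mathcal C}$ is precisely the generator of this $c_0$-semigroup, so I would work throughout in the Banach space $(\mathcal P_4,\|\cdot\|)$ with the norm topology, where the integral $\int_s^t\mathcal G T_rp\,dr$ is understood as a Bochner integral.

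First I would invoke the standard orbit-differentiability property of $c_0$-semigroups: for $p\in\mathcal C$ the map $r\mapsto T_rp$ is norm-differentiable (indeed $C^1$) on $[0,\infty)$, with
$$ \frac{d}{dr}T_rp = \mathcal G T_rp = T_r\mathcal Gp. $$
In particular $T_rp\in\mathcal C$ for every $r\geq0$, and the generator commutes with the semigroup on its domain. This is classical semigroup theory and relies only on Assumption \ref{ass:affine extended}(1). Next I would observe that the integrand $r\mapsto \mathcal G T_rp = T_r(\mathcal Gp)$ is just the orbit of the fixed vector $\mathcal Gp\in\mathcal P_4$ under $T$, hence norm-continuous by strong continuity, and therefore Bochner integrable on the compact interval $[s,t]$.

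Finally, applying the vector-valued fundamental theorem of calculus to the $C^1$ map $r\mapsto T_rp$ over $[s,t]$ yields
$$ T_tp - T_sp = \int_s^t \frac{d}{dr}T_rp\,dr = \int_s^t \mathcal G T_rp\,dr, $$
which is the asserted identity.

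There is essentially no hard step here: the only point requiring care is the identification that the operator $\mathcal G$ appearing in the statement, defined pointwise and topology-free in Definition \ref{def:poly-action}, coincides on $\mathcal C$ with the $c_0$-semigroup generator, which is exactly the content of the remark before the lemma. Once that identification is in place the claim is the textbook result. If one preferred to avoid quoting the Banach-space version of the fundamental theorem of calculus, an alternative is to evaluate both sides against the functionals $\delta_x$, which are $\|\cdot\|$-continuous by Assumption \ref{ass:affine extended}(2), and thereby reduce to the scalar fundamental theorem of calculus applied to $r\mapsto T_rp(x)$; but the Banach-space formulation is cleaner and is what I would present.
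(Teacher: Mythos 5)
Your proposal is correct and follows essentially the same route as the paper: both rest on the standard $c_0$-semigroup facts that $T_rp\in\mathcal C$ with $\partial_r T_rp=\mathcal G T_rp$ for $p\in\mathcal C$, continuity of the integrand, and the vector-valued fundamental theorem of calculus. The only cosmetic difference is that the paper derives continuity of $r\mapsto\mathcal G T_rp$ from closedness of $\mathcal G|_{\mathcal C}$ whereas you use the commutation $\mathcal G T_rp=T_r\mathcal Gp$; both are standard and equivalent here.
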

\begin{proof}
   We have $T_tp\in\CC$ and 
    $$ \partial_t T_tp = \GG T_tp $$
  for any $t\geq 0$. Since $\GG|_{\mathcal C}$ is closed we have that 
   $$ t\mapsto \GG T_tp $$
  is continuous. The Fundamental Theorem of Calculus yields the claim.
\end{proof}
We also have a useful martingale-result for a class of polynomials $p\in\mathcal P_2$:
\begin{lemma}\label{l:Mp martingale}
  Let $p\in\CC\cap \mathcal P_2$. Then 
    $$ M^p_t := p(X_t) - \int_0^t \GG p(X_r) dr,\quad t\geq 0 $$
  defines a martingale with $\E[|M^p_t|^2]<\infty$ for any $t\geq 0$.
\end{lemma}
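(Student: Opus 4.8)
The plan is to establish two things: the uniform second-moment bound $\E[|M^p_t|^2]<\infty$ and the martingale property $\E[M^p_t\mid\mathcal F_s]=M^p_s$ for $0\le s\le t$. The engine behind both is moment control obtained by pushing the relevant squares into $\mathcal P_4$, where Assumption \ref{ass:affine extended} supplies a $\|\cdot\|$-continuous evaluation functional $\Gamma$ and a strongly continuous semigroup. Recall that since $p\in\CC\cap\mathcal P_2$ we have $\GG p\in\mathcal P_2$, and by Assumption \ref{ass:affine extended}(6) both $p^2$ and $(\GG p)^2$ lie in $\mathcal P_4$.

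First I would record the basic moment identity. For any $q\in\mathcal P_4$ the polynomial property of $X$ (Definition \ref{def:poly-process}(2)), applied with conditioning on $\mathcal F_0$, gives $\E[q(X_r)]=\E[T_rq(X_0)]=\Gamma(T_rq)$. Since $(T_h|_{\mathcal P_4})_{h\ge0}$ is strongly continuous and $\Gamma$ is continuous, the map $r\mapsto\Gamma(T_rq)$ is continuous, hence bounded on $[0,t]$. Applying this to $q=(\GG p)^2$ and to $q=p^2$ yields $\sup_{r\in[0,t]}\E[(\GG p)^2(X_r)]<\infty$ and $\E[p^2(X_t)]<\infty$. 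By Cauchy--Schwarz this already gives $\E[\int_0^t|\GG p(X_r)|\,dr]<\infty$, and from $(\int_0^t|\GG p(X_r)|dr)^2\le t\int_0^t(\GG p)^2(X_r)dr$ together with Tonelli one obtains $\E[(\int_0^t|\GG p(X_r)|dr)^2]\le t\int_0^t\Gamma(T_r(\GG p)^2)dr<\infty$. Combining with $\E[p^2(X_t)]<\infty$ gives $\E[|M^p_t|^2]<\infty$.

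For the martingale property I would argue as in Theorem \ref{t:affine drift algebraic}. Fix $0\le s\le t$. Lemma \ref{l:semigroup diff} gives $T_{t-s}p=p+\int_0^{t-s}\GG T_up\,du$ as a Bochner integral in $(\mathcal P_4,\|\cdot\|)$; evaluating at $X_s$ and commuting $\delta_{X_s}$ with the Bochner integral (legitimate by continuity of $\delta_{X_s}$, Assumption \ref{ass:affine extended}(2)) yields $T_{t-s}p(X_s)=p(X_s)+\int_s^t\GG T_{r-s}p(X_s)\,dr$. On the other hand, the integrability established above permits the conditional Fubini theorem, so $\E[\int_s^t\GG p(X_r)dr\mid\mathcal F_s]=\int_s^t\E[\GG p(X_r)\mid\mathcal F_s]dr=\int_s^t T_{r-s}(\GG p)(X_s)dr$, and since $T_{r-s}\GG p=\GG T_{r-s}p$ on the domain this equals $\int_s^t\GG T_{r-s}p(X_s)dr$. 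Subtracting, and using $\E[p(X_t)\mid\mathcal F_s]=T_{t-s}p(X_s)$, gives $\E[M^p_t-M^p_s\mid\mathcal F_s]=0$.

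The main obstacle is the integrability bookkeeping: everything hinges on squaring $\GG p$ and $p$ into $\mathcal P_4$ (Assumption \ref{ass:affine extended}(6)) so that the continuity of $\Gamma$ and the strong continuity of $T$ deliver the uniform moment bounds, which in turn justify both the conditional Fubini step and the finiteness of $\E[|M^p_t|^2]$. A secondary point to check carefully is that $\delta_{X_s}$ commutes with the Bochner integral in Lemma \ref{l:semigroup diff} and that $\GG$ commutes with $T_{r-s}$ on $\CC$; both are standard facts about $c_0$-semigroups but should be stated explicitly.
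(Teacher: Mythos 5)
Your proof is correct and follows essentially the same route as the paper: square $p$ and $\GG p$ into $\mathcal P_4$ via Assumption 6.1(6), use the continuity of $\Gamma$ and the strong continuity of $T|_{\mathcal P_4}$ together with Cauchy--Schwarz and Tonelli to get the second-moment bounds, then combine the conditional Fubini theorem with Lemma 6.3 to obtain the martingale property. Your treatment is if anything slightly more explicit than the paper's (e.g.\ in recording that $\E[p^2(X_t)]<\infty$ and that $\GG$ commutes with $T_{r-s}$ on the domain), but no new idea is involved.
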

\begin{proof}
   Define $q:=(\GG p)^2$ and observe that $q\in\mathcal P_4$ by Assumption \ref{ass:affine extended}(6) with $q(x)\geq 0$ for any $x\in E$. Let $0\leq s\leq t$. We get from
   Cauchy-Schwarz' inequality 
   that
    \begin{align*} 
    \E[|\int_0^t \mathcal Gp(X_r)dr|^2] &\leq \E[t\int_0^t|\mathcal Gp(X_r)|^2dr ] \\ &\leq  t\E[\int_0^t q(X_r) dr]\\ &= t \int_0^t\mathbb E[T_rq(X_0)] dr \\
     &= t\Gamma\left(\int_0^t T_rq dr\right) <\infty.
    \end{align*}
    In the first equality above we applied the tower property with conditioning on $\mathcal F_0$ and in the last bound Assumption \ref{ass:affine extended}(3).
   Hence, $M^p_t$ has finite expectation. Fubini's theorem for the conditional expectation yields
    $$ \E[\int_0^t \mathcal Gp(X_r) dr|\mathcal F_s] = \int_0^s \mathcal Gp(X_r)dr + \int_s^t \mathcal GT_{r-s}p(X_s)dr. $$
   Lemma \ref{l:semigroup diff} yields
     $$ \int_s^t \mathcal GT_{r-s}p(X_s)dr = T_{t-s}p(X_s)-p(X_s). $$
    Thus,
     $$ \E[M^p_t|\mathcal F_s] = T_{t-s}p(X_s) - \left( \int_0^s \mathcal Gp(X_r)dr+T_{t-s}p(X_s)-p(X_s)\right) = M^p_s $$ 
    which shows that $M^p$ is a martingale.
    
    We have
     \begin{align*}
     \E[|M_t^p|^2] &\leq 2\E[ p^2(X_t) ]+2\E\left[\left(\int_0^t\mathcal Gp(X_r)dr\right)^2\right]<\infty.
     \end{align*}
\end{proof}

In the previous lemma we made use of a square argument to ensure that $M^p$ is a martingale with finite second moment. When dealing with classical $d$-variable polynomials this can be improved because any $d$-variable polynomial of degree at most $n$ where $n$ is even can be written as a finite linearcombination of positive polynomials of degree $n$. This, however might fail for abstract polynomials or the infinite dimensional case. For this reason we require the moment condition $\mathcal{P}_2\cdot \mathcal{P}_2 \subset \mathcal{P}_4$ in the last lemma and that $pq$ is in the domain of $\mathcal G$ in the next theorem.

\begin{theorem}\label{t:L2-structure}
   Let $p,q\in\mathcal C\cap \mathcal L$ with $pq\in\mathcal C$
   and define
    \begin{align*}
        M_t^p &:= p(X_t) - \int_0^t \mathcal Gp(X_r)dr\quad{\text{and}} \\
        M_t^q &:= q(X_t) - \int_0^t \mathcal Gq(X_r)dr 
   \end{align*}
   for any $t\geq0$ and we define $a_{p,q}:=\mathcal G(pq)-p\mathcal Gq-q\mathcal Gp$. Recall that Lemma \ref{l:Mp martingale} yields that $M^p$, $M^q$ are martingales with finite second moments.
    
    Then the predictable quadratic covariation of $M^p$ and $M^q$ is given by
    $$ \langle M^p,M^q\rangle_t = \int_0^t a_{p,q}(X_s) ds,\quad t\geq 0. $$
\end{theorem}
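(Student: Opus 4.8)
The plan is to compute the quadratic covariation of $p(X)$ and $q(X)$ by integration by parts and to match the outcome against the martingale furnished by the product $pq$, closely following the strategy of Theorem \ref{t:quadratic covariance}. First I would record the relevant semimartingale decompositions. Since $p,q\in\mathcal C\cap\mathcal L\subseteq\mathcal C\cap\mathcal P_2$, Lemma \ref{l:Mp martingale} applies and yields that $M^p,M^q$ are square-integrable martingales with
$$ p(X_t) = p(X_0) + M^p_t + \int_0^t \mathcal Gp(X_r)\,dr,\qquad q(X_t) = q(X_0) + M^q_t + \int_0^t \mathcal Gq(X_r)\,dr. $$
Moreover $pq\in\mathcal C$ and $pq\in\mathcal P_1\cdot\mathcal P_1\subseteq\mathcal P_2$ by Assumption \ref{ass:affine extended}(6), so $pq\in\mathcal C\cap\mathcal P_2$ and Lemma \ref{l:Mp martingale} also gives that $M^{pq}_t := (pq)(X_t)-\int_0^t\mathcal G(pq)(X_r)\,dr$ is a square-integrable martingale.

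Next I would apply the integration-by-parts formula for semimartingales to the product $p(X)q(X)$ and substitute the two decompositions above, using that $p(X_{r-})$ and $q(X_{r-})$ may be replaced by $p(X_r)$ and $q(X_r)$ inside the $dr$-integrals. Equating the result with the decomposition $p(X_t)q(X_t)=(pq)(X_0)+M^{pq}_t+\int_0^t\mathcal G(pq)(X_r)\,dr$ and cancelling $p(X_0)q(X_0)=(pq)(X_0)$, one isolates
$$ [p(X),q(X)]_t = M^{pq}_t - \int_0^t p(X_{r-})\,dM^q_r - \int_0^t q(X_{r-})\,dM^p_r + \int_0^t a_{p,q}(X_r)\,dr, $$
where $a_{p,q}=\mathcal G(pq)-p\mathcal Gq-q\mathcal Gp$ arises precisely from combining the three drift contributions. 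Because the finite-variation parts $\int_0^\cdot\mathcal Gp(X_r)\,dr$ and $\int_0^\cdot\mathcal Gq(X_r)\,dr$ are continuous, \cite[Proposition I.4.49(d)]{JS} yields $[p(X),q(X)]=[M^p,M^q]$.

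Finally I would identify the compensator. The processes $p(X_{-})$ and $q(X_{-})$ are c\`agl\`ad, hence locally bounded and predictable, so the two stochastic integrals above are local martingales; together with the martingale $M^{pq}$ this shows that $[M^p,M^q]_t-\int_0^t a_{p,q}(X_r)\,dr$ is a local martingale. On the other hand $\int_0^\cdot a_{p,q}(X_r)\,dr$ is continuous, adapted and of finite variation (its integrand is c\`adl\`ag, hence pathwise locally bounded, since $a_{p,q}\in\mathcal P_2$), so it is predictable and of locally integrable variation. By the uniqueness of the predictable compensator of $[M^p,M^q]$ this forces $\langle M^p,M^q\rangle_t=\int_0^t a_{p,q}(X_r)\,dr$, as claimed. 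I expect the main obstacle to be the bookkeeping in the integration-by-parts step and the careful verification that each residual term is a genuine local martingale rather than merely adapted; the continuity of the finite-variation parts (so that they drop out of the quadratic covariation) and the local boundedness of the c\`agl\`ad integrands are the two points requiring care.
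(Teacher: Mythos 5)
Your proof is correct and takes essentially the same route as the paper: both identify $\int_0^t a_{p,q}(X_s)\,ds$ as the candidate compensator by combining the martingale $M^{pq}$ from Lemma \ref{l:Mp martingale} with an integration-by-parts expansion, and both conclude by uniqueness of the predictable finite-variation part. The only cosmetic difference is that you expand $p(X)q(X)$ with the locally bounded integrands $p(X_{-})$, $q(X_{-})$ and invoke uniqueness of the compensator of $[M^p,M^q]$, whereas the paper expands $M^pM^q$ with integrands $\int_0^{\cdot}\mathcal Gp(X_s)\,ds$ and closes the argument via a $\sigma$-martingale/special-semimartingale step.
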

\begin{proof}
   Denote by $\<M^p,M^q\>$ its predictable quadratic covariation in the sense of \cite[Theorem I.4.2]{JS}. Then
    $$ M^p_tM^q_t - \<M^p,M^q\>_t,\quad t\geq 0$$
    is a martingale. 
     Since $pq\in\mathcal C$,
     $$ M^{pq}_t := (pq)(X_t) - \int_0^t \mathcal G(pq)(X_r)dr $$
    is a martingale due to Lemma \ref{l:Mp martingale}. 
    We have 
    \begin{align*}
    \int_0^t \mathcal Gp(X_r)dr &\int_0^t \mathcal Gq(X_r)dr\\
    &= \int_0^t \left(\int_0^s\mathcal Gq(X_r)dr\mathcal Gp(X_s)+\int_0^s \mathcal Gp(X_r)dr\mathcal Gq(X_s)\right) ds, 
    \end{align*}
    and
    \begin{align*}
    &\int_0^t \mathcal Gp(X_r)dr q(X_t)\\ &\quad= \int_0^t q(X_r)\mathcal Gp(X_r)dr + \int_0^t \int_0^s \mathcal Gp(X_r)dr\mathcal Gq(X_s)ds + \int_0^t \int_0^s \mathcal Gp(X_r)dr dM^q_s.
    \end{align*}
    Hence, we find from expanding $M^pM^q$ and the identities above that $N$ given by
    \begin{align*}
        N_t &:= M^p_tM^q_t - \int_0^t a_{p,q}(X_r) dr \\
            &= M_t^{pq} - \int_0^t \int_0^r \mathcal Gp(X_s)ds dM_r^q - \int_0^t \int_0^r \mathcal Gq(X_s)ds dM_r^p
        \end{align*}
     for $t\geq 0$ is a $\sigma$-martingale in the sense of \cite[Definition III.6.33]{JS} due to \cite[Proposition III.6.42]{JS}. Consequently, $N-(M^pM^q - \<M^p,M^q\>)$ is a $\sigma$-martingale and since
      \begin{align}\label{eq:bracketminusintegral}
        N_t-(M^p_tM^q_t - \<M^p,M^q\>_t) = \<M^p,M^q\>_t-\int_0^ta_{p,q}(X_r)dr,\quad t\geq 0 
      \end{align}      it is predictable and of finite variation. Thus, $N-(M^pM^q - \<M^p,M^q\>)$ is a special semimartingale in the sense of \cite[Definition III.4.21(b)]{JS} and, hence, \cite[Proposition I.6.35]{JS} yields that it is a local martingale. \cite[Corollary I.3.16]{JS} yields that $N-(M^pM^q - \<M^p,M^q\>)$ is, in fact, constant $0$. Equation \eqref{eq:bracketminusintegral} yields the claim.
\end{proof}

Restricting our attention to diffusions in separable Hilbert spaces, we find the following 
corollary to our main result:
\begin{corollary}\label{c:stetig}
  Let $E$ be a separable Hilbert space with inner product $(\cdot,\cdot)_E$, $W$ an $E$-valued Brownian motion with covariance operator $Q$ in the sense of \cite[Sects.  3.5 and 4.4]{peszat.zabczyk.07} and assume that there is a linear subspace $\mathcal E$ of the continuous linear operators from $E$ to $\mathbb R$ which is contained in $\mathcal L\cap\mathcal C$ and such that $p^2\in\mathcal C$ for any $p\in\mathcal E$. We assume additionally that
    $$ X_t = X_0 + \int_0^t \beta_s ds + \int_0^t \sigma_s dW_s,\quad t\geq 0 $$
    where $\beta$ is a progressively measurable process, $E$-valued and locally integrable and $\sigma$ is a progressively measurable process, $L(E)$-valued and locally square integrable. 
    
    We denote the Lebesgue measure restricted to $\mathbb R_+$ by $\lambda$. Then
     \begin{align*}
         p(\beta_t) &= \mathcal Gp(X_t), \\
         ( \sigma_tQ\sigma^*_t p^*, q^*)_E &= a_{p,q}(X_t)
     \end{align*}
     $P\otimes \lambda$-a.s.\ for any $p,q\in\mathcal E$ where we have that $pq\in\mathcal C$ and $a_{p,q}$ is defined in Theorem \ref{t:L2-structure} and $p^*$ (resp.\ $q^*$) is the unique element in $E$ such that $p=(\cdot,p^*)_E$ (resp.\ $q=(\cdot,q^*)_E$).
     
    Moreover, if $\mathcal E$ is dense in the set of continuous linear operators, then $\mathcal G|_{\mathcal P_2}$ determines the drift $\beta$ and the covariance $\sigma Q\sigma^*$.
\end{corollary}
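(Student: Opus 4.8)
The plan is to compute the continuous scalar semimartingale $p(X)$ in two independent ways and to match the resulting decompositions. Fix $p,q\in\mathcal E$. Since $\mathcal E$ is a linear subspace of $\mathcal C$ with $r^2\in\mathcal C$ for every $r\in\mathcal E$, and $\mathcal C$ is a vector space, the polarisation identity $pq=\tfrac12\bigl((p+q)^2-p^2-q^2\bigr)$ gives $pq\in\mathcal C$, so Lemma \ref{l:Mp martingale} and Theorem \ref{t:L2-structure} apply. Applying the continuous linear functional $p=(\cdot,p^*)_E$ to the assumed SDE for $X$ and using $p(\sigma_s dW_s)=(\sigma_s^* p^*,dW_s)_E$ yields
$$ p(X_t) = p(X_0) + \int_0^t p(\beta_s)\,ds + \int_0^t (\sigma_s^* p^*,dW_s)_E, \quad t\geq 0, $$
in which the first integral is continuous of finite variation and the second is a continuous local martingale. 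On the other hand, Lemma \ref{l:Mp martingale} furnishes the alternative decomposition $p(X_t)=M^p_t+\int_0^t \mathcal Gp(X_r)\,dr$ with $M^p$ a continuous square-integrable martingale.

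Both expressions are canonical decompositions of the same continuous semimartingale into a continuous finite-variation part (vanishing at $t=0$) and a continuous local-martingale part. By uniqueness of this decomposition the finite-variation parts agree, so $\int_0^t p(\beta_s)\,ds=\int_0^t \mathcal Gp(X_s)\,ds$ a.s.\ for all $t$; differentiating in $t$ gives the first identity $p(\beta_t)=\mathcal Gp(X_t)$ $P\otimes\lambda$-a.s. Simultaneously the martingale parts agree, i.e.\ $M^p_t-p(X_0)=\int_0^t (\sigma_s^* p^*,dW_s)_E$.

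For the covariance I would compute the predictable quadratic covariation $\<M^p,M^q\>$ in two ways. From the Hilbert-space stochastic-integral representation just obtained together with the $Q$-Wiener quadratic-variation formula (Peszat and Zabczyk \cite{peszat.zabczyk.07}) one has $\<M^p,M^q\>_t=\int_0^t (\sigma_s Q\sigma_s^* p^*,q^*)_E\,ds$. Theorem \ref{t:L2-structure} gives $\<M^p,M^q\>_t=\int_0^t a_{p,q}(X_s)\,ds$. Equating integrands, again by uniqueness of the predictable compensator, yields the second identity $(\sigma_tQ\sigma_t^* p^*,q^*)_E=a_{p,q}(X_t)$ $P\otimes\lambda$-a.s. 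For the \emph{moreover} statement I would invoke density of $\mathcal E$: the values $r(\beta_t)=\mathcal Gr(X_t)$ over $r\in\mathcal E$ determine $\beta_t$ since $\mathcal E$ is dense in the continuous functionals and these separate points of $E$, while the bilinear form $(p^*,q^*)\mapsto a_{p,q}(X_t)$ determines the symmetric operator $\sigma_tQ\sigma_t^*$; as $\mathcal Gp$ and $a_{p,q}=\mathcal G(pq)-p\mathcal Gq-q\mathcal Gp$ are built from $\mathcal G$ on $\mathcal P_1$ and on products lying in $\mathcal P_2$, the restriction $\mathcal G|_{\mathcal P_2}$ determines both $\beta$ and $\sigma Q\sigma^*$.

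I expect the main obstacle to be the careful passage between the infinite-dimensional stochastic integral against the $Q$-Wiener process and the scalar brackets, namely justifying $M^p_t-p(X_0)=\int_0^t(\sigma_s^* p^*,dW_s)_E$ as an identity of genuine martingales and establishing the quadratic-variation identity $\<M^p,M^q\>_t=\int_0^t(\sigma_sQ\sigma_s^* p^*,q^*)_E\,ds$, together with the measure-theoretic step of upgrading the time-integrated equalities to $P\otimes\lambda$-a.e.\ pointwise equalities; by contrast the uniqueness-of-decomposition and density arguments are routine.
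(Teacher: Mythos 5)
Your proposal is correct and follows essentially the same route as the paper's proof: polarisation to get $pq\in\mathcal C$, matching the canonical decomposition of the continuous semimartingale $p(X)$ obtained from the SDE against the drift-martingale decomposition from Lemma \ref{l:Mp martingale}, and then identifying $\langle M^p,M^q\rangle$ via the $Q$-Wiener bracket formula on one side and Theorem \ref{t:L2-structure} on the other. Your write-up is in fact somewhat more explicit than the paper's (uniqueness of the semimartingale decomposition, the upgrade to $P\otimes\lambda$-a.e.\ identities, and the density argument for the final assertion), but it is not a different argument.
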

\begin{proof}
  Theorem \ref{t:affine drift} yields that 
   $$ p(X_t) = M_t^p + \int_0^t \mathcal Gp(X_s) ds,\quad t\geq 0 $$
   where $M^p$ is a martingale. Since
    $$ p(X_t) = \left( p(X_0) + \int_0^t p\sigma_sdW_s\right) + \int_0^t p(\beta_s) ds $$
    we find the claim for $\beta$ and see that
     $$ M_t^p = p(X_0) + \int_0^t p\sigma_sdW_s,\quad t\geq 0. $$
   Since $p,q\in\mathcal E$ and $p+q\in\mathcal E$ we find that 
    $$ pq = \frac12((p+q)^2-p^2-q^2)\in\mathcal C $$
   Thus $\<M^p,M^q\>_t = \int_0^t a_{p,q}(X_s)ds$ by Theorem \ref{t:L2-structure}. On the other hand we have
    $$ \<M^p,M^q\>_t = \int_0^t (\sigma_sQ\sigma^*_s p^*, q^*)_E ds $$
    and the claim follows.
   \end{proof}


\end{document}